\theoremstyle{plain}
\newtheorem{definition}{Definition}[section]
\newtheorem{theorem}[definition]{Theorem}
\newtheorem*{theorem*}{Theorem}
\newtheorem{assumption}[definition]{Assumption}
\newtheorem{remark}[definition]{Remark}
\newtheorem*{remark*}{Remark}
\newtheorem*{sideremark*}{Side Remark}\newtheorem*{mt*}{Main Theorem}
\newtheorem*{claim*}{Claim}
\newtheorem*{q*}{Question}
\newtheorem{lemma}[definition]{Lemma}
\newtheorem{corollary}[definition]{Corollary}
\newtheorem*{corollary*}{Corollary}
\newtheorem*{proposition*}{Proposition}
\newtheorem{proposition}[definition]{Proposition}
\newcommand{\R}{\mathbb{R}}
\newcommand{\C}{\mathbb{C}}
\newcommand{\na}{\nabla}
\newcommand{\lie}{\mathcal{L}}
\newcommand{\dd}{{\rm d}}
\newcommand{\p}{\partial}
\newcommand{\e}{\epsilon}
\newcommand{\emb}{\hookrightarrow}
\newcommand{\map}{\rightarrow}
\newcommand{\G}{\Gamma}
\newcommand{\1}{\mathbbm{1}}
\newcommand{\xyhat}{\widehat{x-y}}
\newcommand{\two}{{\rm II}}
\newcommand{\scal}{\mathcal{S}}
\newcommand{\what}{\widehat{\omega}}
\newcommand{\htwo}{\mathcal{H}^2}
\newcommand{\GG}{{\mathcal{G}}}
\newcommand{\good}{{\rm good}}
\newcommand{\bad}{{\rm bad}}
\newcommand{\NN}{\mathcal{N}}
\newcommand{\bbi}{\mathbf{b}^{(i)}}
\newcommand{\n}{\mathbf{n}}
\newcommand{\yst}{{y^{\star}}}
\newcommand{\tyst}{(Ty)^\star}
\newcommand{\TI}{\Theta^{(i)}}
\newcommand{\I}{\mathcal{I}}
\newcommand{\B}{\mathcal{B}}
\newcommand{\OO}{\mathcal{O}}
\newcommand{\smallo}{\mathfrak{o}}
\newcommand{\F}{\mathcal{F}}
\newcommand{\txty}{{Tx-Ty}}
\newcommand{\zzz}{{\zeta\bigg(\frac{|\txty|}{d_4}\bigg)}}
\newcommand{\sh}{{\Psi^{\sharp}}}
\newcommand{\fl}{{\Psi^{\flat}}}
\newcommand{\shh}{\widetilde{\Psi}^\sharp}
\newcommand{\fll}{\widetilde{\Psi}^\flat}
\newcommand{\txtys}{Tx-\tyst}
\newcommand{\shhh}{\overline{\Psi}^\sharp}
\newcommand{\ooo}{{O_{d_3}(U_b)}}
\def\XXint#1#2#3{{\setbox0=\hbox{$#1{#2#3}{\int}$ }
\vcenter{\hbox{$#2#3$ }}\kern-.6\wd0}}
\numberwithin{equation}{section}
\numberwithin{figure}{section}
\keywords{Navier--Stokes equations; incompressible; vorticity; regularity; weak solution; oblique derivatives; boundary effects; Green's matrix; vortex stretching; vortex alignment; Navier boundary condition.}
\subjclass[2010]{35Q30, 76D05, 76N10, 35J08}
\title{Geometric Regularity Criteria for incompressible Navier--Stokes Equations with Navier Boundary Conditions}
\author{Siran Li}
\address{Siran Li: Department of Mathematics, Rice University, MS 136
P.O. Box 1892, Houston, Texas, 77251-1892, USA; \, $\bullet$ \,  Department of Mathematics, McGill University, Burnside Hall, 805 Sherbrooke Street West, Montreal, Quebec, H3A 0B9, Canada.}
\email{\texttt{Siran.Li@rice.edu}}
\date{\today}
\begin{document}

\maketitle

\begin{abstract}
We study the regularity criteria for weak solutions to the 3D incompressible Navier--Stokes equations in terms of the direction of vorticity, taking into account the boundary conditions. A boundary regularity theorem is proved on regular curvilinear domains with a family of oblique derivative boundary conditions, provided that the directions of vorticity are coherently aligned up to the boundary. As an application, we establish the boundary regularity for weak solutions to Navier--Stokes equations in round balls, half-spaces and right circular cylindrical ducts, subject to the classical Navier and kinematic boundary conditions. 
\end{abstract}

\section{Introduction and Statement of Main Results}

This paper is concerned with the regularity of weak solutions to the 3-dimensional incompressible Navier--Stokes equations on a regular domain $\Omega$ in $\R^3$:
\begin{eqnarray}
&&\p_t u + {\rm div}\, (u\otimes u) - \nu \Delta u + \na p =0 \qquad \text{ in } [0,T^\star[ \times \Omega,\label{ns}\\
&&{\rm div}\, u = 0 \qquad \text{ in } [0,T^\star[ \times \Omega,\label{incompressible}\\
&& u|_{t=0} = u_0 \qquad\text{ on } \{0\}\times \Omega.\label{initial data}
\end{eqnarray}
The fluid boundary $\p\Omega =: \Sigma$ is a regular surface (at least $C^2$). Here $u:\Omega \map \R^3$ is the velocity, $p:\Omega \map \R$ the pressure, and $\nu>0$ the viscosity of the fluid. We  study the regularity criteria {\em up to the boundary} under the assumptions on the geometry of vorticity alignment. The system \eqref{ns}\eqref{incompressible}\eqref{initial data} will be considered under a general class of boundary conditions.

Let us begin the discussion on boundary conditions with some motivating examples: Take $\Omega$ to be a round ball, a half-space or a  cylindrical duct smoothly embedded in $\R^3$. Then, we impose to Eqs.\,\eqref{ns}\eqref{incompressible}\eqref{initial data} the classical Navier and kinematic boundary conditions: Let $\mathbb{T} \in \mathfrak{gl}(3;\R)$ be the {\em Cauchy stress tensor} of the fluid in $\Omega$ (here and throughout $\mathfrak{gl}(3,\R)$ denotes the space of $3 \times 3$ real matrices), defined by 
\begin{equation}\label{cauchy stress}
\mathbb{T}^i_j := \nu (\na_i u^j + \na_j u^i) \qquad \text{ for } i,j\in\{1,2,3\}.
\end{equation}
Its contraction with the normal vector field on $\Sigma$, known as the {\em Cauchy stress vector} $\mathbf{t}\in \G(T\Sigma)$, describes the stress on the boundary contributed by the fluid from the normal direction:
\begin{equation}
\mathbf{t}^i := \sum_{j=1}^3\mathbb{T}^i_j \n^j \qquad \text{ for } i\in\{1,2,3\}.
\end{equation}
The classical Navier boundary condition, first proposed by Navier \cite{navier} in 1816, requires the tangential component of the Cauchy stress vector to be  proportional to the tangential component of the velocity on $\Sigma$:
\begin{equation}\label{navier bc}
\beta u \cdot \tau + \mathbf{t} \cdot \tau = 0 \qquad \text{ for each } \tau \in \G(T\Sigma) \text{ on } [0,T^\star[ \times \Sigma, 
\end{equation}
where the constant $\beta > 0$ is known as the {\em slip length} of the fluid. Here and in the sequel we write $\G(T\Sigma)$ for the space of tangent vector fields to $\Sigma$.   We moreover impose the {\em kinematic} or {\em impenetrability} boundary condition:
\begin{equation}\label{kinematic bc}
u \cdot \n = 0 \qquad \text{ on } [0,T^\star[ \times \Sigma,
\end{equation}
where $\n$ is the outward unit normal vector field along $\Sigma$. The above choices for domains $\Omega$ and boundary conditions all have physical relevance.

Throughout the paper, we say that $u$ is a {\em weak solution} to the Navier--Stokes equations \eqref{ns}\eqref{incompressible} if 
$$
u\in L^\infty(0,T^\star; L^2(\Omega; \R^3))\cap L^2(0,T^\star; H^1(\Omega; \R^3))
$$ satisfies the equations in the sense of distributions, and, in addition, the {\em energy inequality} holds:
\begin{equation}\label{energy ineq}
\frac{1}{2}\frac{\dd }{\dd t}\int_\Omega |u(t,x)|^2\,\dd x + \nu \int_\Omega |\na u(t,x)|^2\,\dd x - c\int_\Sigma |u(t,y)|^2\,\dd \htwo(y) \leq 0\quad \text{ for each } t\in [0,T^\star[,
\end{equation}
where $c$ is a constant depending only on $\Omega$ and $\nu$. The initial condition \eqref{initial data} is also understood in the sense of distributions.  The energy inequality was proposed in the classical works by Leray \cite{leray} and Hopf \cite{hopf} on Eqs.\,\eqref{ns}\eqref{incompressible} in $\Omega = \R^3$, where $c =0$. Here the $c$ term is introduced to account for the boundary conditions; we shall give a justification in Lemma \ref{lemma: energy ineq} in Sect.\,3 below. A weak solution $u$ is said to be a {\em strong solution} if it further satisfies 
$$
\na u \in L^\infty(0, T^\star; L^2(\Omega; \mathfrak{gl}(3,\R))) \cap L^2(0, T^\star; H^1(\Omega; \mathfrak{gl}(3,\R))).
$$
We adopt the above definitions for weak and strong solutions also for more general types of boundary conditions, {\it e.g.}, the oblique derivative boundary condition in \eqref{homog bc}, as well as the Navier and kinematic boundary conditions in \eqref{navier bc}\eqref{kinematic bc}.

In regard to the aforementioned motivating examples, our main result of the paper can be stated as follows:
\begin{theorem}\label{thm: main}
Let $\Omega \subset \R^3$ be one of the following smooth domains: a round ball, a half-space, or a right  circular cylindrical duct.  Let $u$ be a weak solution to the Navier--Stokes equations \eqref{ns}\eqref{incompressible}\eqref{initial data} with the Navier and kinematic boundary conditions \eqref{navier bc}\eqref{kinematic bc}. Suppose that the vorticity $\omega = \na \times u$ is {\em coherently aligned up to the boundary} in the following sense: there exists a constant $\rho>0$ such that
\begin{equation}
|\sin \theta(t; x,y)| \leq \rho \sqrt{|x-y|} \qquad \text{ for all $x,y\in\overline{\Omega}, \,  t<T^\star$}.
\end{equation}
Here, the turning angle of vorticity $\theta$ is defined as
\begin{equation}
\theta(t;x,y):=\angle\big(\omega(t,x),\omega(t,y)\big).
\end{equation}
Then $u$ is a strong solution on $[0,T^\star[$.
\end{theorem}

\begin{remark}
For $y \in \Sigma$, $\omega(t,y)$ is understood in the sense of trace. 
\end{remark}

The regularity theory for the incompressible Navier--Stokes equations has long been a central topic in PDE and mathematical hydrodynamics; {\it cf.} Constantin--Foias \cite{cfs}, Fefferman \cite{f}, Lemari\'{e}-Rieusset \cite{lr}, Temam \cite{temam}, Seregin \cite{s} and many references cited therein. One major problem in the regularity theory is concerned with the regularity of weak solutions, {\it i.e.}, under what conditions can a weak solution be the strong solution. In \cite{cf} Constantin and Fefferman first proposed the following {\em geometric regularity condition}: For a weak solution to the Navier--Stokes equations on the {\em whole space} $\Omega = \R^3$, if there are constants $\rho, \Lambda>0$ such that 
\begin{equation}\label{cf1}
|\sin \theta(t; x,y)|\1_{\{|\omega(t,x)|\geq \Lambda, |\omega(t,y)| \geq \Lambda\}} \leq \rho{|x-y|} \qquad \text{ for all $x,y\in\R^3$, $t<T^\star$},
\end{equation}
then the weak solution is indeed  strong. Here $\theta$ is the turning angle of vorticity as in Theorem \ref{thm: main}. The above result by  Constantin--Fefferman \cite{cf} suggests that, if the vortex lines of the fluid are coherently aligned, {\it i.e.}, without sharp turnings before time $T^\star$, then the weak solutions cannot blow up by $T^\star$. It opens up the ways for many subsequent works on regularity conditions in terms of the geometry of vortex structures; see Beir\~{a}o da Veiga--Berselli \cite{bb1, bb2}, Beir\~{a}o da Veiga \cite{bb3}, Chae \cite{chae}, Li \cite{l}, Giga--Miura \cite{gm}, Gruji\'{c} \cite{grujic}, Gruji\'{c}--Ruzmaikina \cite{gr}, Vasseur \cite{v} and many others. Let us remark that, in \cite{bb1}, Beir\~{a}o da Veiga--Berselli improved the right-hand side of \eqref{cf1} in Constantin--Fefferman's criterion to $\rho\sqrt{|x-y|}$; that is, they improved the H\"{o}lder exponent from $1$ to $1/2$.

In line with the above results, Theorem \ref{thm: main} proposes a geometric regularity condition for the weak solutions to the Navier--Stokes equations. The main new feature is that we work on regular domains $\Omega \subset \R^3$, so the boundary conditions play a crucial role when we investigate the regularity theory {\em up to the boundary}. In the literature, the ``geometric boundary regularity conditions'' have been studied for only one special slip-type boundary condition proposed by Solonnikov--\u{S}\u{c}adilov in \cite{ss} (also see Xiao--Xin \cite{xx}), which agrees with the Navier and kinematic boundary conditions \eqref{navier bc}\eqref{kinematic bc} if and only if $\Omega = \R^3_+$:
\begin{equation}\label{special bc}
u \cdot \n = 0, \qquad \omega \times \n = 0 \qquad \text{ on } [0,T^\star[ \times \Sigma;
\end{equation}
see Beir\~{a}o da Veiga \cite{bb3} for the case of $\Omega = \R^3_+$ and Beir\~{a}o da Veiga--Berselli \cite{bb2} for the case of general bounded $C^{3,\alpha}$ domains $\Omega \Subset \R^3$. Let us note that,  in the latter case, the condition \eqref{special bc} no longer agrees with the Navier and kinematic boundary conditions. Therefore, our work is the first in the literature to prove the geometric boundary regularity under the physical (Navier and kinematic) boundary conditions on regular curvilinear domains.

Let us briefly remark on the Navier and kinematic boundary conditions in Eq.\,\eqref{navier bc}\eqref{kinematic bc}. The kinematic boundary condition requires that the fluid motion on $\Sigma$ can only be tangential with respect to the boundary, {\it i.e.,} $\Sigma$ is impermeable. The Navier boundary condition further describes the tangential motion of the fluid on $\Sigma$: its velocity is proportional to the tangential component of the Cauchy stress vector $\mathbf{t}$. It was proposed by Navier \cite{navier} to resolve the incompatibility between the theoretical predictions from the Dirichlet boundary condition ($u = 0$ on $\Sigma$) and the experimental data. It was later considered by Maxwell in 1879 (\cite{maxwell}) for the motion of rarefied gases. In recent years, the Navier boundary condition has been extensively studied in fluid models when the {\em curvature effect} of the boundary becomes considerable. In particular, free capillary boundaries, perforated boundaries or the presence of an exterior electric field may lead to such situations for flows with large Reynolds number; {\it cf.} Achdou--Pironneau--Valentin \cite{apv}, B\"{a}nsch \cite{bansch}, Einzel--Panzer--Liu \cite{epl} and many others for related physical and numerical studies, and {\it cf.} Berselli--Spirito \cite{bs}, Chen--Qian \cite{cq}, Iftimie--Raugel--Sell \cite{irs}, J\"{a}ger--Mikeli\'{c} \cite{jm}, Masmoudi--Rousset \cite{mr}, Neustupa--Penel \cite{np}, Xiao--Xin \cite{xx} and many references cited therein for the mathematical analysis of the Navier boundary condition.

Our strategy for proving Theorem \ref{thm: main} is as follows. By elementary energy estimates (see Sect.\,3) it suffices to control the {\em vortex stretching term}:
\begin{equation}\label{stretch}
[{\rm Stretch}] := \Big|\int_\Omega \scal u(t,x):\omega(t,x)\otimes \omega(t,x)\,\dd x \Big|,
\end{equation}
where $\scal u$ is the rate-of-strain tensor, {\it i.e.}, the symmetrised gradient of $u$:
\begin{equation}
\scal u:= \frac{\na u + \na^\top u}{2} : [0,T^\star[ \times \Omega \map \mathfrak{gl}(3,\R). 
\end{equation}
For this purpose, following \cite{cf} we represent $\scal u$ by a singular integral of $\omega$. We first localise the problem to coordinate charts on $\overline{\Omega}$ ({\it cf.} Gruji\'{c} \cite{grujic}). In the interior charts the integral kernel ``looks like'' that on $\R^3$, whose estimates are obtained by Constantin--Fefferman in \cite{cf}. In each boundary chart, thanks to the results by Solonnikov \cite{s1, s2}, there exists one single Green's matrix for the Laplacian, which can be explicitly constructed by transforming to the model problem (Poisson equation with oblique derivative boundary conditions; {\it cf.} Sect.\,2 below) on the half space $\R^3_+$. With suitable bounds for the term $[Stretch]$ at hand (these estimates occupy the major part of the paper; see Sect.\,4 below), we can conclude using the Hardy--Littlewood--Sobolev inequality and the Gr\"{o}nwall's lemma.

In the estimation of $[Stretch]$, one major difficulty is to control the boundary terms, which naturally arise during the integration by parts. We realise that if the vorticity turning angle $\theta$ remains coherently aligned up to the boundary (as in the assumption in Theorem \ref{thm: main}), then, thanks to the geometric structure of the boundary terms, such bounds can be achieved. Our assumption is weaker than that by Beir\~{a}o da Veiga--Berselli in \cite{bb2}: it is required in \cite{bb2} that $\omega \times \n = 0$, {\it i.e.}, $\omega$ points in the normal direction to the regular hypersurface $\Sigma \subset \R^3$, which is automatically coherently aligned on the boundary $\Sigma$.  (Indeed, when $\omega \times \n = 0$ the boundary term in $[Stretch]$ vanishes.) On the other hand, in each boundary chart we need to straighten the boundary by a local $C^2$-diffeomorphism onto some subset of $\R^3_+$. These boundary-straightening diffeomorphisms enter the estimates in a crucial way. We need delicate analyses for the geometry of $\Sigma$ to bound the contributions to $[Stretch]$ from the boundary charts. Many of these estimates are new to the literature.

Moreover, let us emphasise that our approach in this paper applies to more general boundary conditions than those considered in Theorem \ref{thm: main}:
\begin{enumerate}
\item
The energy estimates in Sect.\,3 below are valid for Navier and kinematic boundary on arbitrary regular embedded surfaces in $\R^3$;
\item
The potential estimates is applicable to the diagonal oblique derivative boundary conditions with constant coefficients (see Sect.\,4).
\end{enumerate}
In both (1) and (2) above, we do not need to impose any restriction on the specific geometry of $\Omega$ other than sufficient regularity requirements, {\it e.g.}, $\Omega \in C^{3,\alpha}$.



\medskip

The remaining parts of the paper is organised as follows: In Sect.\,2 we present Solonnikov's theory on the  Green's matrices for a special class of elliptic systems. Next, in Sect.\,3 we collect the energy estimates for the Navier--Stokes system \,\eqref{ns}\eqref{incompressible}\eqref{navier bc}\eqref{kinematic bc}. In Sect.\,4 we prove the boundary regularity theorem for the Navier--Stokes equations under the general diagonal oblique derivative conditions. This is achieved by potential estimates based on the theory outlined in Sect.\,2. Finally, in Sect.\,5, we deduce Theorem \ref{thm: main} for the Navier and kinematic boundary conditions as an instance of the theory laid down in Sect.\,4.

\section{Green's Matrices}\label{Green's Matrices}

In this section we summarise the theory of Green's matrices for a general family of boundary value problems for the diagonal elliptic systems. It is the foundation of the subsequent developments in the paper. For the convenience of exposition we focus only on the $(3 \times 3)$ elliptic systems, although the general theory applies to $N \times M$ systems for arbitrary $N,M \geq 2$. 

Let us consider the system with the homogeneous boundary conditions:
\begin{eqnarray}
&& -\Delta u = f:= \na \times \omega \qquad \text{ in } [0,T^*[ \times \Omega,\label{Poisson eq} \\
&& (\NN u)^i= a^{(i)} u^i + \sum_{j=1}^3 b^{(i)}_{j}\na_j u^i = 0 \qquad \text{ on } [0,T^*[ \times \p\Omega\quad\text{ for each } i=1,2,3,\label{homog bc}
\end{eqnarray}
where without loss of generality we assume 
\begin{equation}
a^{(i)} \leq 0,\qquad \sum_{j=1}^3 \big[b^{(i)}_j\big]^2=1
\end{equation} 
for each $i=1,2,3$ and $\NN u =\{(\NN u)^i\}_{i=1}^3$, in some local coordinates $\{x^1, x^2, x^3\}$ near a point $p\in\Sigma := \p\Omega$. The key assumption here is that the boundary conditions \eqref{homog bc} are {\em diagonal}: in suitable coordinates it is decoupled into three scalar equations in $u^1, u^2$ and $u^3$, respectively. This ensures that the Green's matrices for Problem \eqref{Poisson eq}\eqref{homog bc}, constructed by Solonnikov (\cite{s1,s2}; see below for details), are diagonal. Also, in order to write down the explicit expressions for the Green's matrices, we require that
$$
a^{(i)}, \, \bbi \text{ are constants  for each }  i\in\{1,2,3\}.
$$
Our goal is to represent $u$ in terms of $\omega$; in the case of $\Omega= \R^3$ and no boundary conditions other than suitable decay at infinity, the above system is solved by the convolution $u=K_{\rm bs}\ast \omega$, where $K_{\rm bs}$ is the classical Biot--Savart kernel.

The system \eqref{Poisson eq}\eqref{homog bc} is known as an {\em oblique derivative problem} for the Poisson equation. Throughout we write $\Sigma:=\p\Omega$ and $\n:=$ the outward unit normal vector field along $\Sigma$. Introducing the notations
$$
\bbi := \big(b^{(i)}_1,b^{(i)}_2,b^{(i)}_3\big)^\top\qquad \text{ for } i=1,2,3
$$
and writing 
$$
\na = (\p/\p x^1, \p/\p x^2, \p/\p x^3),
$$ 
we can rewrite the boundary condition \eqref{homog bc} as 
\begin{equation*}
(\NN u)^i = a^{(i)}u^i + \bbi \cdot \na u^i = 0 \qquad \text{ on } \Sigma \text{ for each } i=1,2,3.
\end{equation*}
We note that the boundary condition \eqref{homog bc} is fairly general: when $\bbi =0$, it reduces to the Dirichlet boundary condition for $u^i$; when $\bbi = (\bbi \cdot \n) \n$ and $a^{(i)}=0$, it reduces to the Neumann boundary condition; moreover, when $\bbi\cdot\n \neq 0$, the condition \eqref{homog bc} is known as the regular oblique derivative condition.

We shall divide our discussions on the boundary value problem \eqref{Poisson eq}\eqref{homog bc} in two subsections: In Sect.\,2.1 we collect some facts about elliptic PDE systems, and in Sect.\,2.2 we present the Green's matrices associated to the oblique derivative boundary conditions.

\subsection{Elliptic Systems and the Existence of Green Matrices}

In this subsection we outline the theory for the elliptic systems of the Petrovsky type developed by Solonnikov \cite{s1, s2}. Our use of Solonnikov's theory is motivated by \cite{bb2} by Beir\~{a}o da Veiga--Berselli; also see Proposition 2.2 in Temam \cite{temam}.

We consider a $3\times 3$ linear PDE system
\begin{equation}\label{3 by 3 system}
\lie_i u := \sum_{j=1}^3 l_{ij}(x,\na) u^j = f_i \qquad \text{ in } \Omega \subset \R^3
\end{equation}
which is {\em elliptic} in the sense of ADN theory (Agmon--Douglis--Nirenberg \cite{adn1, adn2}). Here $i,j\in\{1,2,3\}$, $u=(u^1,u^2,u^3), f=(f_1,f_2,f_3):\Omega\map\R^3$ are vector fields/one-forms on $\Omega$, and $\{l_{ij}\}$ is a $3 \times 3$ matrix of differential operators. A family of weights $\{s_1,s_2,s_3; t_1,t_2,t_3\}\subset\mathbb{Z}$ is associated to the system \eqref{3 by 3 system}, such that
\begin{equation}\label{weights}
s_i \leq 0 \text{ for each } i, \qquad \text{ the order of $l_{ij}$} \leq  \max\{0, s_i+t_j\}.
\end{equation}
Then, we set $l'_{ij}(x,\na)$ to be the principal part of $l_{ij}$, namely the sum of all terms in $l_{ij}(x,\na)$ of order $(s_i+t_j)$, and consider the characteristic matrix $\{l'_{ij}(x,\xi)\}_{1\leq i,j\leq 3}$. Then, \eqref{3 by 3 system} is {\em elliptic} if and only if $s_i, t_j$ satisfying \eqref{weights} exist for every $x\in\Omega$, and that
\begin{equation}\label{det}
\det \big\{l'_{ij}(x,\xi)\big\} \neq 0 \qquad \text{ for all } \xi \in \R^3 \setminus \{0\}.
\end{equation}

Now we consider the boundary conditions imposed to the system \,\eqref{3 by 3 system}. Throughout, $\Sigma:=\p\Omega$ is a $C^2$ surface, and we use $p$ to denote a typical boundary point on $\Sigma$. A generic (linear) boundary condition is of the form
\begin{equation}\label{generic bc}
\sum_{j=1}^3 B_{hj}(p,\na) u_j(p) = \phi_h(p) \qquad \text{ on } \Sigma \text{ for } h=1,2,\ldots, m,
\end{equation}
where 
\begin{equation}
m:=\frac{1}{2}\deg\,\det\big\{l'_{ij}(p,\xi)\big\} > 0,
\end{equation}
for which the determinant (as in Eq.\,\eqref{det}) is viewed as a polynomial in $\xi$. Similarly, viewing $B_{hj}(p,\xi)$ as a $\C$-coefficient polynomial in $\xi$ (depending on $p$), we consider another set of weights $\{r_1,r_2, \ldots, r_m\}\subset \mathbb{Z}$ such that
\begin{equation}\label{weights 2}
\deg\big\{B_{hj}(p,\xi)\big\} \leq \max\{r_h + t_j, 0\}
\end{equation}
with $t_j$ given as above. Now, for any $p \in \Sigma$ we consider $\Xi \in T_p\Sigma \setminus\{0\}$ and 
\begin{eqnarray}
&&
\tau_k^+ (p,\Xi) := \text{ roots in $\tau$ with positive imaginary part of $\lie_k(p,\Xi+ \tau \n)=0$},\\
&&
M^+ (p,\Xi,\tau) := \prod_{h=1}^m \Big(\tau-\tau^+_h(p,\Xi)\Big).
\end{eqnarray}
We also write $\{B'_{hj}\}$ for the principal part of $B_{hj}$, and view $M^+(p,\Xi,\tau)$ as a polynomial in $\tau$. The boundary condition \eqref{generic bc} is said to be {\em complementing} to the elliptic system \eqref{3 by 3 system} if, for every $p\in\Sigma$ and every $\Xi \in T_p\Sigma \setminus\{0\}$, there exist $\{r_h\}_{h=1,2,\ldots, m}$ everywhere satisfying \eqref{weights 2}, and
\begin{equation}
\sum_{h=1}^m C_h \sum_{j=1}^3 B'_{hj} \Big\{\text{adjoint matrix of } l_{ij}'(p,\Xi + \tau\n)\Big\} \equiv 0 \, ({\rm mod}\, M^+ ) \Leftrightarrow C_h=0 \text{ for all h}.
\end{equation}

All the classical boundary conditions (Dirichlet, Neumann, regular oblique derivative etc., homogeneous or inhomogeneous) are known to be complementing to the Poisson equation. 
\begin{definition}
Consider the elliptic PDE system \eqref{3 by 3 system}\eqref{generic bc} with complementing boundary conditions in the ADN sense, and with weights $\{s_i, t_j, r_h\}$ as above. If one can choose $s_i=0$ and $r_h<0$ for all $i\in\{1,2,3\}$ and $h\in\{1,\ldots,m\}$, then \eqref{3 by 3 system}\eqref{generic bc} is said to be of the Petrovsky type.
\end{definition}

\begin{lemma}
The system \eqref{Poisson eq}\eqref{homog bc} is of the Petrovsky type.
\end{lemma}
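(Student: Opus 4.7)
The plan is to verify the three defining requirements for the Petrovsky type: existence of ADN weights $\{s_i,t_j,r_h\}$ with $s_i = 0$ and $r_h < 0$, interior ellipticity \eqref{det}, and the complementing (Shapiro--Lopatinski) property of $\NN$.

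First I would read off the weights. Since $l_{ij} = -\Delta\,\delta_{ij}$ has order $2$ on the diagonal and is trivial off it, setting $s_i = 0$ and $t_j = 2$ satisfies \eqref{weights}. The principal symbol becomes $l'_{ij}(x,\xi) = |\xi|^2 \delta_{ij}$, hence $\det\{l'_{ij}(x,\xi)\} = |\xi|^6 \neq 0$ on $\R^3 \setminus \{0\}$, which confirms \eqref{det} and fixes $m = 3$, matching the three scalar conditions in \eqref{homog bc}. The boundary symbol $B_{hj}(p,\xi) = \delta_{hj}(a^{(h)} + \bb^{(h)} \cdot \xi)$ has order $1$ on the diagonal, so the choice $r_h = -1$ meets \eqref{weights 2}: $\deg B_{hj} \leq 1 = r_h + t_j$. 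Since $s_i = 0$ and $r_h = -1 < 0$, the Petrovsky weight constraint is fulfilled.

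Next I would check complementing. Fix $p \in \Sigma$ and $\Xi \in T_p\Sigma \setminus \{0\}$. The characteristic polynomial $\det\{l'_{ij}(p,\Xi + \tau\n)\} = (|\Xi|^2 + \tau^2)^3$ has the single positive-imaginary root $\tau^+ = i|\Xi|$ of multiplicity $3$, so $M^+(p,\Xi,\tau) = (\tau - i|\Xi|)^3$. Because $L'$ is a scalar multiple of the identity, its adjugate equals $|\xi|^4 I_3$; consequently, for any $(C_1,C_2,C_3) \in \C^3$, the quantity $\sum_h C_h \sum_j B'_{hj}\{\text{adj}\,L'\}_{ji}$ at $\xi = \Xi + \tau\n$ reduces, for each fixed $i$, to $C_i(\bb^{(i)} \cdot \Xi + \tau\,\bb^{(i)} \cdot \n)(|\Xi|^2 + \tau^2)^2$. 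Factoring $(|\Xi|^2 + \tau^2)^2 = (\tau - i|\Xi|)^2(\tau + i|\Xi|)^2$ and reducing modulo $(\tau - i|\Xi|)^3$, the residue is a nonzero multiple of $C_i(\bb^{(i)} \cdot \Xi + i|\Xi|\,\bb^{(i)} \cdot \n)$; its imaginary part $|\Xi|(\bb^{(i)} \cdot \n)$ is nonzero by the regular-obliqueness assumption $\bb^{(i)} \cdot \n \neq 0$, which forces $C_i = 0$ for each $i$. This is exactly the complementing condition.

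The only step carrying any real substance is the Shapiro--Lopatinski check above; the diagonal structure of $\NN$ collapses it to the scalar oblique-derivative case, which closes immediately under $\bb^{(i)} \cdot \n \neq 0$. I therefore do not anticipate any genuine obstacle.
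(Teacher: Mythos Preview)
Your argument is correct and follows the same route as the paper: both choose the weights $s_i=0$, $t_j=2$, $r_h=-1$ and note $m=3$. The paper's proof is terser---it simply records these weights and relies on the preceding remark that regular oblique derivative conditions are ``known to be complementing'' to the Poisson equation, whereas you carry out the Shapiro--Lopatinski verification explicitly; your additional computation is sound and the reduction to the scalar case via the diagonal structure is exactly right.
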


\begin{proof}
In this case we have $\lie=-\Delta
$ and $l'_{ij}(x,\xi)=(\xi^1)^2(\xi^2)^2(\xi^3)^2$, hence $m=3$. Using $\NN = \{B_{hj}\}_{1\leq h,j\leq 3}$ in \eqref{homog bc}, we can pick $s_1=s_2=s_3=0$, $t_1=t_2=t_3=2$ and $r_1=r_2=r_3=-1$.   \end{proof}
	
	Therefore, in view of Solonnikov's theory on the existence of Green's matrices for Petrovsky-type elliptic systems ({\it cf.} p126, \cite{s2} and p606, \cite{bb2}), we may deduce:
\begin{lemma}\label{lemma: solonnikov}
A matrix field $\{\GG_{ij}\}_{1\leq i,j\leq 3}: \Omega \times \Omega \map \R$ exists for the system \eqref{Poisson eq}\eqref{homog bc} such that 
\begin{equation}
u^i(x)=\sum_{j=1}^3\int_\Omega \GG_{ij}(x,y)f^j(y)\,\dd y \qquad \text{ for each } i=1,2,3.
\end{equation}
Moreover, we have the decomposition $\GG = \GG^\good + \GG^\bad$, where
\begin{equation}
\exists \,C_\bad >0: \qquad \Big|\na^\alpha_x\na^\beta_y\GG^\bad(x,y)\Big| \leq \frac{C_\bad}{|x-y|^{|\alpha|+|\beta|+1}}  \quad\text{ for all } x\neq y\in\Omega,
\end{equation}
and
\begin{equation}\label{good}
\exists\, C_\good >0, \delta>0:\qquad \Big|\na^\alpha_x\na^\beta_y\GG^\good(x,y)\Big|\leq\frac{C_\good}{|x-y|^{|\alpha|+|\beta|+1-\delta}}\quad\text{ for all } x\neq y\in\Omega,
\end{equation}
for any multi-indices $\alpha,\beta \in \mathbb{N}^\mathbb{N}$. For $\Sigma=\p\Omega$ sufficiently regular, one can take $\delta>1/2$. 
\end{lemma}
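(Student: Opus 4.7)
The plan is to build $\GG$ by a classical parametrix method: start from an explicit Green's matrix for the model constant-coefficient problem on the half space, transplant it to a neighbourhood of each boundary point via a coordinate chart that flattens $\Sigma$, and then correct by solving away the resulting commutator error using Solonnikov's a priori estimates for Petrovsky-type elliptic systems. The diagonality of $\NN$ will be preserved at every step of this procedure, yielding the diagonal structure of $\{\GG_{ij}\}$.

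First I would treat the model problem on $\R^3_+$. Since \eqref{homog bc} is diagonal in the adapted coordinates, it decouples into three scalar Poisson equations $-\Delta u^i=f^i$ with the constant-coefficient oblique derivative condition $a^{(i)}u^i + \bbi \cdot \na u^i = 0$ on $\{x^3=0\}$. Each admits an explicit Green's function $G^{(i)}(x,y)$, obtainable either by partial Fourier transform in the tangential variables $(x^1,x^2)$ or by a method-of-images reflection combined with a one-dimensional ODE in the normal direction. The construction takes the form
\begin{equation*}
G^{(i)}(x,y) = \Gamma(x-y) + H^{(i)}(x,y),
\end{equation*}
where $\Gamma(z)=(4\pi|z|)^{-1}$ is the Newtonian kernel and $H^{(i)}$ is built from a reflected Newtonian kernel plus a Poisson-type boundary integral enforcing \eqref{homog bc}. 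Direct differentiation gives $|\na^\alpha_x\na^\beta_y\Gamma(x-y)|\lesssim |x-y|^{-1-|\alpha|-|\beta|}$, which supplies the $\GG^\bad$ bound, while the reflected pieces of $H^{(i)}$ are singular only at $|x-\yst|\geq |x-y|$, so that a factor $(|x-y|/|x-\yst|)^\delta$ can be absorbed to produce the $\GG^\good$ bound \eqref{good} with some $\delta>0$.

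Next I would pass from the model to the curved domain $\Omega$. Choosing a finite atlas of boundary charts $\{(U_k,\Phi_k)\}$ in which $\Phi_k$ flattens $\Sigma\cap U_k$ to $\R^3_+$ (of matching $C^{3,\alpha}$-regularity), the operator $-\Delta$ pulls back to a variable-coefficient second-order elliptic operator agreeing with $-\Delta$ at the base point, and $\NN$ pulls back to a first-order oblique operator with $C^{2,\alpha}$ coefficients. Freezing the coefficients at each base point and patching the half-space Green's matrices with a partition of unity produces a parametrix $\GG^0$. The error $\lie\GG^0 - \mathrm{Id}\,\delta_{x=y}$ is a weakly singular kernel of strictly lower order, and iterating a Neumann series together with Solonnikov's mapping properties for Petrovsky systems \cite{s1,s2} converts the parametrix into a true Green's matrix. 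The interior contribution is already the classical Newtonian parametrix; only the boundary charts contribute the $\GG^\good$ correction.

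The main obstacle is the quantitative improvement from $\delta>0$ to $\delta>1/2$ for sufficiently regular $\Sigma$. This requires expanding the flattening map one further order of Taylor precision so that the frozen-coefficient error carries a remainder vanishing like $|x|^{1+\alpha}$ in the tangential variable; convolving such a gain against the half-space Green's matrix upgrades the smoothness of the correction by slightly more than a half-derivative, giving exactly $\delta>1/2$. Carrying this bookkeeping through all derivatives $\na^\alpha_x\na^\beta_y$ in the iterated Neumann expansion is routine but tedious and constitutes the technical heart of the argument, treated in full detail in \cite{s1,s2}.
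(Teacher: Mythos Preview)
The paper does not actually prove this lemma: it verifies in the preceding lemma that the system is of Petrovsky type and then simply invokes Solonnikov's general existence theory \cite{s1,s2} (and its summary in \cite{bb2}) as a black box. Your sketch goes further and outlines the parametrix construction underlying that theory, and the overall strategy---explicit half-space Green's function, transplant via boundary-flattening charts, correct by a Neumann series driven by Solonnikov's a priori estimates---is the right one and is indeed what Solonnikov carries out.

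There is, however, a misidentification in your first substantive paragraph. On the half-space model the reflected piece $H^{(i)}$ (built from $\Gamma(x-\yst)$ and the $\TI$ integral) is \emph{not} the good part. When $x$ and $y$ both approach the same boundary point one has $|x-\yst|\sim|x-y|$, so $\Gamma(x-\yst)$ carries exactly the same order of singularity as $\Gamma(x-y)$; multiplying by the factor $(|x-y|/|x-\yst|)^\delta\le 1$ cannot improve the exponent. In fact on $\R^3_+$ the entire explicit kernel is the principal ($\GG^\bad$) part and there is no $\GG^\good$ remainder at all. The $\GG^\good$ contribution arises only in the passage to the curved domain, precisely from the commutator error $\lie\GG^0-\mathrm{Id}\,\delta_{x=y}$ and its Neumann iterates that you correctly describe in your second paragraph. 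This matches how the paper later uses the decomposition (Lemma~\ref{lemma: representation formula}): the localised explicit half-space kernel produces the $J_1,J_2$ terms, and only the curvature remainder enters $J_3$ through $\GG^\good$. Once you relocate the good/bad split accordingly, your outline is sound.
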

	In the lemma above, $\GG$ is known as the {\em Green's matrix} for the oblique derivative boundary value problem for the Poisson equation \eqref{Poisson eq}\eqref{homog bc}. The crucial point is that the solution can be represented by {\em one single} matrix. Moreover, under our assumption that the boundary conditions are diagonal (decoupled), we know that $\GG$ is a diagonal matrix, namely
	\begin{equation}
	\GG_{ij}(x,y) = g(x,y) \delta_{ij}
	\end{equation}
for a scalar function  $g:\Omega \times \Omega \map \R$. In this case 
$$
u^i(x)=\int_\Omega g(x,y)f^i(y)\,\dd y,
$$
so we can carry out potential estimates for the corresponding {\em scalar} functions. Thus we can resort to well-developed theories in PDE; {\it cf.} Gilbarg--Trudinger \cite{gt}.

\subsection{Diagonal Oblique Derivative Boundary Conditions}
Now, let us discuss the system \eqref{Poisson eq}\eqref{homog bc} on the half space $\R^3_+:=\{(x^1,x^2,x^3)\in\R^3: x^3>0\}$, namely
\begin{eqnarray}
&&-\Delta u = f \qquad \text{ in } \R^3_+,\label{model system}\\
&&\NN u^i = a^{(i)} u^i + \sum_{j=1}^3 b^{(i)}_j \na_j u^i = 0 \qquad \text{ on } \{x^3=0\} \text{ for each } i =1,2,3,\label{model system bc}
\end{eqnarray}
where $a^{(i)}, \bbi$ are {\em constants}. In Sect.\,4 below we shall localise \eqref{Poisson eq}\eqref{homog bc} so that, in each chart near the boundary $\Sigma$,  the system ``looks like'' the above model system \eqref{model system}\eqref{model system bc} ({\it i.e.}, modulo certain linear transforms which can be nicely controlled). 

For  each $y \in \R^3_+$ let us write:
\begin{equation*}
y=(y',y^3) \text{ where } y'=(y^1, y^2), \qquad \yst := (y', -y^3).
\end{equation*}
That is, $\yst$ is the reflected point (the ``virtual charge'') across the boundary $\{x^3=0\}$. We use $\langle\cdot,\cdot\rangle$ to denote the Euclidean inner product. Also, for $x,y \in \R^3$ we write
\begin{equation}
\G(x,y):=\frac{1}{|x-y|},
\end{equation}
namely the fundamental solution to the Laplace equation in $\R^3$ (up to a multiplicative constant). One also denotes by
\begin{equation}
\xi := \frac{x-\yst}{|x-\yst|} \qquad \text{ for } x, y\in\R^3_+.
\end{equation}
Then, following Sect.\,6.7 in Gilbarg--Trudinger \cite{gt}, the Green's matrix $\{\GG_{ij}\}$ for the model problem \eqref{model system}\eqref{model system bc} takes the following explicit form:
\begin{equation}\label{green matrix for oblique}
\GG_{ij}(x,y) = \frac{\delta_{ij}}{4\pi} \Big\{\G(x-y)-\G(x-\yst)-\frac{2 b^{(i)}_3}{3|x-\yst|}\TI(x,\yst)\Big\},
\end{equation}
where for each $i=1,2,3$,
\begin{equation}\label{theta in green matrix}
\TI(x,\yst) :=  \int_0^\infty\Bigg\{e^{a^{(i)}|x-\yst|s} \frac{\xi_3 + b^{(i)}_3 s}{\big[1+2\langle \bbi,\xi\rangle s+s^2\big]^{3/2}} \Bigg\}\,\dd s.
\end{equation}
In fact, later (in Lemma \ref{lemma: Theta kernel}) we shall check that $\TI$ is smooth in $(x,y)$. 

The above representation formulae \eqref{green matrix for oblique}\eqref{theta in green matrix} are the starting point of our subsequent estimates. Recall that the Green's matrix for the Dirichlet condition is 
	\begin{equation}
	\mathcal{G}^{\rm Dirichlet}_{ij}(x,y) = \frac{\delta_{ij}}{4\pi} \Big\{ \Gamma(x-y) - \Gamma(x-y^\star)\Big\},
	\end{equation}
which can be obtained as a special case of Eqs.\,\eqref{green matrix for oblique}\eqref{theta in green matrix}  by setting $b_3^{(i)}=0$ for each $i$ (hence the $\Theta^{(i)}$-term becoming zero). Thus, for the case of the regular oblique derivative condition in this paper, the major difference arises from the nontrivial conditions for $\partial u\slash \partial \mathbf{n}$ on the boundary. Our analyses in this paper cover more general cases than the Dirichlet condition, taken into account the $\Theta^{(i)}$-term ({\it cf.} also Remark~\ref{remark: regular oblique} below). Our notations for the integral kernels in this paper slightly differ from those in \cite{gt}.

\section{Basic Energy Estimates}

In this section we derive the energy estimates for the Navier--Stokes Eqs.\,\eqref{ns}\eqref{incompressible}, subject to the general Navier and kinematic boundary conditions in \eqref{navier bc}\eqref{kinematic bc}. Whenever the estimates are {\rm kinematic}, {\it i.e.}, valid pointwise in time, we  suppress the variable $t$ to simplify the presentation. Let us first fix some notations: for $a,b\in\R^3$, we write
\begin{equation*}
a\otimes b = \{a \otimes b\}_{ij} \in \mathfrak{gl}(3,\R), \qquad (a\otimes b)_{ij} := a^i b^j \text{ for } i,j\in\{1,2,3\};
\end{equation*}
and for $A, B \in \mathfrak{gl}(3,\R)$,  write
\begin{equation*}
A : B := {\rm Trace}\, (A B), \qquad  |A| := \sqrt{\sum_{i,j=1}^3 |A_{ij}|^2}.
\end{equation*}
We also need the following geometric quantities:
\begin{equation}
\two:=-\na\n: \G(T\Sigma)\times \G(T\Sigma)\map \G(T\Sigma^\perp)
\end{equation}
is the second fundamental form of $\Sigma$, and 
\begin{equation}
H_\Sigma := {\rm Trace}\,(\two)
\end{equation}
is the mean curvature of $\Sigma$. The metric on $\Sigma$ (with respect to which we are taking the trace) is the pullback of the Euclidean metric via the natural inclusion $\Sigma \emb \R^3$. We use $\G(T\Sigma)$ to denote the space of vector fields tangential to $\Sigma$, and use $\htwo$ to denote the 2-dimensional Hausdorff measure on $\Sigma$.

To begin with, let us take the gradient of Eq.\,\eqref{ns} and anti-symmetrise it. This gives us the {\em vorticity equation}:
\begin{equation}\label{vorticity eq}
\p_t \omega + u \cdot \na\omega = \nu \Delta \omega + \scal u\cdot \omega
\end{equation}
in $[0,T^\star[ \times \Omega$. In the sequel, for $\omega$ and $u$ to satisfy the Navier boundary condition \eqref{navier bc}, we understand \eqref{navier bc} in the sense of trace. In particular, let us impose the following, which shall be taken as part of the definition for the weak solutions to the system \eqref{ns}\eqref{incompressible}\eqref{navier bc}\eqref{kinematic bc}.
\begin{assumption}
Both the tangential and the normal traces of $\omega$ on $\Sigma = \p\Omega$ exist. The incompressibility condition $\na \cdot u = 0$ holds on $\Sigma$, also in the sense of trace.
\end{assumption}

Let us establish several energy estimates for the strong solutions. First, we note that the $L^2$ norm of $\na u$ can be bounded by the $L^2$ norm of $u$ and $\omega=\na\times u$, which can be shown by a direct integration by parts:
\begin{lemma}\label{lemma: div curl}
Let $u$ be a strong solution to Eqs.\,\eqref{ns}\eqref{incompressible}\eqref{navier bc}\eqref{kinematic bc} on $[0,T_\star[ \times \Omega$. Then, for all $t \in [0,T^\star[$,
\begin{equation}
\int_\Omega |\na u|^2\,\dd x \leq \|\two\|_{L^\infty(\Sigma)} \int_{\Omega}|u|^2\,\dd x + \int_\Omega |\omega|^2\,\dd x.
\end{equation}
\end{lemma}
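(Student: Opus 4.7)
The plan is to reduce the lemma to a single boundary integral and identify its integrand as the second fundamental form. First I would invoke the pointwise identity
\[
|\na u|^2 \;=\; |\omega|^2 + \p_i u^j \p_j u^i,
\]
a direct consequence of $|\omega|^2 = \tfrac{1}{2}|\na u - \na^\top u|^2$. Integrated over $\Omega$, this turns the target inequality into a question about the ``mixed-trace'' quantity $I:=\int_\Omega \p_i u^j \p_j u^i\,\dd x$.

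Next I would integrate by parts in the index $i$: the bulk contribution contains $\p_j(\p_i u^i)$ and vanishes by incompressibility, leaving $I = \int_\Sigma u^j(\p_j u^i)\n^i\,\dd\htwo$ (the trace makes sense because $u$ is a strong solution, so $\na u(t,\cdot) \in H^1(\Omega)$ for a.e. $t$). To recognise this as a geometric quantity, I would extend $\n$ smoothly to a neighbourhood of $\Sigma$, invoke the kinematic condition $u\cdot\n=0$ on $\Sigma$, and differentiate the scalar identity $u^i\n^i\equiv 0$ along the tangent direction $u$; the result is $u^j(\p_j u^i)\n^i = -u^i u^j \p_j\n^i = \two(u,u)$ on $\Sigma$, matching the convention $\two = -\na\n$ on tangential vectors.

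Finally, the pointwise estimate $|\two(u,u)|\leq \|\two\|_{L^\infty(\Sigma)}|u|^2$ produces a bound of the form $I \leq \|\two\|_{L^\infty(\Sigma)}\int_\Sigma |u|^2\,\dd\htwo$; to obtain the stated bulk norm $\int_\Omega |u|^2\,\dd x$ on the right, I would invoke the standard trace inequality $\|u\|_{L^2(\Sigma)}^2 \lesssim \|u\|_{L^2(\Omega)}\|u\|_{H^1(\Omega)}$ followed by Young's inequality, absorbing the resulting $\|\na u\|_{L^2(\Omega)}^2$ factor into the left-hand side. There is no genuine obstacle here: the only delicate step is the sign and tensorial identification of the boundary integrand as $\two(u,u)$, which also explains why only the kinematic condition \eqref{kinematic bc} is used and the Navier slip law \eqref{navier bc} plays no role at this stage.
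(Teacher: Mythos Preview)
The paper does not prove this lemma at all: it simply cites Chen--Qian \cite{cq}, equation~(3.4). Your argument is the standard one and is almost certainly what underlies that reference: the pointwise identity $|\na u|^2=|\omega|^2+\p_i u^j\,\p_j u^i$ (valid for any vector field), integration by parts using $\na\cdot u=0$ to reduce to a boundary term, and identification of that boundary integrand as $\two(u,u)$ via the kinematic condition. This part is clean and correct.

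There is one wrinkle in your last step. What your computation actually yields is
\[
\int_\Omega |\na u|^2\,\dd x \;=\; \int_\Omega |\omega|^2\,\dd x \;+\; \int_\Sigma \two(u,u)\,\dd\htwo
\;\le\; \int_\Omega |\omega|^2\,\dd x \;+\; \|\two\|_{L^\infty(\Sigma)}\int_\Sigma |u|^2\,\dd\htwo,
\]
with a \emph{surface} integral on the right. Passing to the bulk norm $\int_\Omega|u|^2$ via trace and absorption, as you propose, works but does not preserve the precise constant $\|\two\|_{L^\infty(\Sigma)}$: after absorbing $\|\na u\|_{L^2}^2$ you pick up a factor depending on the trace constant of $\Omega$ and on $\|\two\|_{L^\infty(\Sigma)}^2$, not a single power of $\|\two\|_{L^\infty(\Sigma)}$. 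So either the lemma as printed has $\int_\Omega$ where $\int_\Sigma$ is meant (this is exactly the form that drops out of the identity, and it is how the result is typically stated), or the constant in front should be read as ``a constant depending only on $\|\two\|_{L^\infty(\Sigma)}$ and $\Omega$''. Either way this is harmless for the paper's purposes, since the lemma is only used qualitatively in the energy estimates that follow; but you should flag that your route gives the surface-integral version exactly and the bulk version only up to a domain-dependent constant.
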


\begin{proof}
See (3.4), p.728 in Chen--Qian \cite{cq}.
\end{proof}

The next result concerns the growth of {\em enstrophy}, namely the square of the $L^2$ norm of vorticity. One may  compare it with Lemma 2.6 in \cite{bb2} (recall that the vorticity stretching term $[Stretch]$ is defined in Eq.\,\eqref{stretch}):
\begin{lemma}\label{lemma: enstrophy}
Let $u$ be a strong solution to Eqs.\,\eqref{ns}\eqref{incompressible}\eqref{navier bc}\eqref{kinematic bc} on $[0,T_\star[ \times \Omega$. Then there exists a constant $c_0$ depending only on $\beta, \nu$ and $\|\two\|_{C^1(\Sigma)}$ such that for each $t \in [0,T^\star[$, we have
\begin{equation}
\frac{1}{2}\frac{\dd}{\dd t}\int_\Omega |\omega|^2\,\dd x + \frac{\nu}{2} \int_{\Omega} |\na \omega|^2\,\dd x - c_0 \int_\Sigma \Big(|\na u|^2+|u|^2\Big)\,\dd\htwo \leq [{\rm Stretch}].
\end{equation} 
\end{lemma}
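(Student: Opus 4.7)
The plan is to test the vorticity equation \eqref{vorticity eq} against $\omega$, integrate over $\Omega$, and carefully track all boundary contributions that arise from the integration by parts, exploiting both \eqref{navier bc} and \eqref{kinematic bc}. The identity produced is
\begin{equation*}
\frac{1}{2}\frac{\dd}{\dd t}\int_\Omega |\omega|^2\,\dd x + \int_\Omega (u\cdot\na\omega)\cdot\omega\,\dd x \;=\; \nu\int_\Omega \Delta\omega\cdot\omega\,\dd x + \int_\Omega \scal u:\omega\otimes\omega\,\dd x.
\end{equation*}
The convective term equals $\tfrac{1}{2}\int_\Omega u\cdot\na|\omega|^2\,\dd x$, and one integration by parts turns it into a volume integral proportional to $\dv u$, which vanishes by \eqref{incompressible}, plus a boundary integral proportional to $u\cdot\n$, which vanishes by \eqref{kinematic bc}. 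The source contribution is, up to a sign, bounded by $[\mathrm{Stretch}]$.

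All the work then concentrates on the surface term produced by the Laplacian,
\begin{equation*}
\nu\int_\Omega \Delta\omega\cdot\omega\,\dd x \;=\; -\nu\int_\Omega|\na\omega|^2\,\dd x + \nu\int_\Sigma \omega\cdot(\n\cdot\na)\omega\,\dd\htwo.
\end{equation*}
The first step in handling the boundary term is the standard identity, derivable pointwise from \eqref{navier bc}--\eqref{kinematic bc} by differentiating $u\cdot\n=0$ along $\Sigma$ and substituting into the definition \eqref{cauchy stress}: the tangential part of $\omega|_\Sigma$ is an algebraic expression in $u|_\Sigma$ involving only $\beta/\nu$ and the second fundamental form $\two$, while the normal part $(\omega\cdot\n)|_\Sigma$ equals a surface divergence of the tangential velocity. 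Consequently $|\omega|_\Sigma|$ is controlled by $|u|_\Sigma|$ and its tangential derivatives on $\Sigma$ are controlled by $|u|_\Sigma| + |\na u|_\Sigma|$, both with constants depending only on $\beta,\nu,\|\two\|_{C^1(\Sigma)}$.

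Next I would decompose $\omega = \omega_{\tau} + (\omega\cdot\n)\n$ on $\Sigma$ and rewrite $\omega\cdot(\n\cdot\na)\omega$ using $\dv\omega = 0$; this trades the genuinely normal derivative of the tangential part of $\omega$ for tangential derivatives of $\omega$ on $\Sigma$ together with curvature contractions of the form $\two\cdot\omega\otimes\omega$. Applying the Navier-type identity of the previous paragraph to each factor, followed by Cauchy--Schwarz and Young's inequality, will yield a bound of the form
\begin{equation*}
\Big|\nu\int_\Sigma \omega\cdot(\n\cdot\na)\omega\,\dd\htwo\Big| \;\leq\; c_0\int_\Sigma\big(|u|^2+|\na u|^2\big)\,\dd\htwo \;+\; \frac{\nu}{2}\int_\Omega |\na\omega|^2\,\dd x,
\end{equation*}
where the last term is absorbed into the dissipation on the left; any instance of $\na\omega$ appearing on $\Sigma$ is handled via a trace/interpolation inequality of the type $\|\na\omega\|_{L^2(\Sigma)}\lesssim \|\omega\|_{H^1(\Omega)}^{1/2}\|\omega\|_{H^2(\Omega)}^{1/2}$, which is justified for strong solutions. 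Assembling these pieces gives exactly the stated inequality with $c_0=c_0(\beta,\nu,\|\two\|_{C^1(\Sigma)})$.

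The hard part will be quantifying the boundary contribution in the Laplacian identity, because the Navier BCs only describe $\omega|_\Sigma$ algebraically in $u|_\Sigma$ and not its normal derivative; one must therefore trade normal derivatives of $\omega$ for tangential ones at the cost of curvature terms from $\two$, and keep track of all constants so that they depend only on $\beta,\nu$ and $\|\two\|_{C^1(\Sigma)}$. The remaining terms are bookkeeping.
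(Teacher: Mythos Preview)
Your overall strategy coincides with the paper's: multiply the vorticity equation by $\omega$, eliminate the convective term via \eqref{incompressible}--\eqref{kinematic bc}, and reduce everything to bounding $\nu\int_\Sigma \omega\cdot\partial_\n\omega\,\dd\htwo$. The paper likewise decomposes $\omega=\omega^\parallel+\omega^\perp$ on $\Sigma$ and uses the Navier condition to express $\omega^\parallel$ algebraically in $u$ and $\two$.

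Two points in your sketch are off, though. First, you have the role of $\dv\omega=0$ backwards: it controls the normal derivative of the \emph{normal} component, not of the tangential one. Concretely, the paper proves the pointwise identity
\[
\tfrac12\,\n\cdot\nabla|\eta|^2-(\dv\eta)(\eta\cdot\n)=H_\Sigma|\eta|^2\qquad\text{for }\eta\in\Gamma(T\Sigma^\perp),
\]
and applies it with $\eta=\omega^\perp$, together with $\dv\omega^\perp=-\dv\omega^\parallel$, to obtain $\omega^\perp\cdot\partial_\n\omega^\perp=-(\dv\omega^\parallel)\,|\omega^\perp|+H_\Sigma|\omega^\perp|^2$. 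The three remaining pieces, which involve $\partial_\n\omega^\parallel$, are handled directly: since the Navier relation gives $\omega^\parallel$ on $\Sigma$ as an explicit expression in $u$ and $\two$, one has $|\nabla\omega^\parallel|\le C(\beta,\nu,\|\two\|_{C^1})(|\nabla u|+|u|)$, with no further device required.

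Second, your fallback of handling any leftover $\nabla\omega$ on $\Sigma$ via a trace bound $\|\nabla\omega\|_{L^2(\Sigma)}\lesssim\|\omega\|_{H^1(\Omega)}^{1/2}\|\omega\|_{H^2(\Omega)}^{1/2}$ would not close the argument: the inequality being proved offers only $\tfrac{\nu}{2}\|\nabla\omega\|_{L^2(\Omega)}^2$ for absorption, not $\|\omega\|_{H^2(\Omega)}^2$, and strong solutions as defined in the paper carry no a~priori $H^2$ control on $\omega$. The whole point of the paper's decomposition is that \emph{no} factor of $\nabla\omega$ survives on $\Sigma$---every boundary contribution collapses to a quadratic expression in $|u|$ and $|\nabla u|$ alone, and the $\tfrac{\nu}{2}\int_\Omega|\nabla\omega|^2$ cushion is in fact never invoked.
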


\begin{proof}
We divide our arguments into four steps.

{\bf 1.} First, multiplying $\omega$ to the vorticity equation \eqref{vorticity eq}, we get
\begin{equation}\label{qqq}
\p_t(|\omega|^2) + u \cdot \na(|\omega|^2) - \nu \Delta(|\omega|^2) + 2 \nu |\na \omega|^2 = 2 \scal u: (\omega \otimes \omega).
\end{equation}
By Eq.\,\eqref{incompressible} and the divergence theorem,
\begin{equation*}
\int_{\Omega} u\cdot \na (|\omega|^2)\,\dd x = \int_\Sigma |\omega|^2 u \cdot \n\,\dd \htwo,
\end{equation*}
which vanishes due to the kinematic boundary condition.  On the other hand, by the divergence theorem again, we have
\begin{equation}\label{bdry term: bad}
\int_{\Omega} \Delta(|\omega|^2)\,\dd x =  \int_\Sigma \frac{\p |\omega|^2}{\p \n}\,\dd\htwo = 2\int_\Sigma \omega \cdot \frac{\p \omega}{\p\n}\,\dd\htwo,
\end{equation}
where $\p/\p\n := \n\cdot\na$. In view of Eq.\,\eqref{qqq} and the triangle inequality, it remains to establish
\begin{equation}\label{bad bdry trace}
\bigg|\int_{\Sigma} \omega \cdot \frac{\p \omega}{\p\n}\,\dd\htwo\bigg| \leq \frac{\nu}{2} \int_{\Omega} |\na \omega|^2\,\dd x + c_0 \int_\Sigma \Big(|\na u|^2+|u|^2\Big)\,\dd\htwo.
\end{equation}

\smallskip
{\bf 2.} To deal with the last term in \eqref{bdry term: bad}, let us utilise the Navier boundary condition \eqref{navier bc}. Take an arbitrary orthonormal frame $\{\p/\p x^i\}$ on $\R^3$, and suppose that $\tau = \p/\p x^k$ is a tangential vector field to $\Sigma$; then
\begin{align*}
0 &= \beta u^k + \sum_{i=1}^3\nu(\na_i u^k + \na_k u^i)\n^i\\
&= \beta u^k +  \sum_{i=1}^3\Big(\nu (-\na_k u^i + \na_i u^k)\n^i + 2\nu (\na_k u^i)\n^i\Big)\\
&= \beta u^k + \nu \sum_{i,l=1}^3\e^{ikl}\omega^l + 2\nu \na_k (\sum_{i=1}^3 u^i\n^i)-2\nu \sum_{i=1}^3 u^i \na_k \n^i
\end{align*}
for any $i\in\{1,2,3\}$. Thanks to $u \cdot \n = 0$ and the definition of the second fundamental form, we obtain an equivalent formulation of the Navier boundary condition as follows:
\begin{equation}\label{eq formulation for navier bc}
0 = \beta u \cdot \tau + \nu (\omega \times \n) \cdot \tau - 2\nu \two(u, \tau) \qquad \text{ on } \Sigma \text{ for each } \tau \in \G(T\Sigma).
\end{equation}
Moreover, note that if we decompose $\omega$ into tangential and normal components:
\begin{equation}
\omega := \omega^{\parallel} + \omega^{\perp} \qquad \text{ for } \omega^{\parallel} \in \G(T\Sigma),\, \omega^{\perp} \in \G(T\Sigma^\perp),
\end{equation}
then $\omega^{\parallel}$ can be pointwise controlled by $u$ and the geometry of $\Sigma$:
\begin{equation}\label{parallel estimate}
|\omega^{\parallel}| \leq \Big(\beta\nu^{-1} + 2 \|\two\|_{L^\infty(\Sigma)}\Big)|u|.
\end{equation}

\smallskip
{\bf 3.} Now let us estimate 
\begin{equation}\label{parallel and perp}
2\int_\Sigma \omega \cdot \frac{\p\omega}{\p\n}\,\dd\htwo = 2 \int_\Sigma \bigg\{\omega^{\parallel} \cdot \frac{\p\omega^{\parallel}}{\p\n} + \omega^\perp \cdot \frac{\p\omega^{\parallel}}{\p\n} + \omega^\parallel \cdot \frac{\p\omega^{\perp}}{\p\n} + \omega^\perp \cdot \frac{\p\omega^{\perp}}{\p\n} \bigg\}\,\dd\htwo
\end{equation}
in Eq.\,\eqref{bdry term: bad}. 
For the first two terms, let us use Eq.\,\eqref{eq formulation for navier bc} to derive that
\begin{equation}
\na \omega^{\parallel} = L(\na u, \na\two \star u, \two \star \na u),
\end{equation}
where the schematic tensor $L(X_1, X_2,\ldots)$ denotes a linear combination of $X_1, X_2, \ldots$ with coefficients depending only on $\beta, \nu$, and $X \star Y$ denotes a generic quadratic term in $X,Y$ with constant coefficients. Thus, we have the pointwise estimate
\begin{equation}\label{na omega parallel}
|\na \omega^\parallel| \leq C (|\na u| + |u|),
\end{equation}
where $C$ depends only on $\|\two\|_{C^1}, \beta$ and $\nu$. We can bound
\begin{align}\label{TT,NT}
\bigg|\int_\Sigma \bigg\{\omega^{\parallel} \cdot \frac{\p\omega^{\parallel}}{\p\n} + \omega^\perp \cdot \frac{\p\omega^{\parallel}}{\p\n}\bigg\}\,\dd\htwo\bigg| &\leq C\int_\Sigma \Big(|\omega||\na u| + |\omega||u|\Big)\,\dd\htwo\nonumber\\
&\leq C\bigg\{2\int_\Sigma|\na u|^2\,\dd\htwo +\int_\Sigma |u|^2 \,\dd\htwo\bigg\},
\end{align}
using Eq.\,\eqref{na omega parallel} and Cauchy--Schwarz, with the constant $C=C(\|\two\|_{C^1}, \beta, \nu, \Sigma)$. 

For the third term, notice that
\begin{equation*}
\int_\Sigma \omega^\parallel \cdot \frac{\p\omega^{\perp}}{\p\n} \,\dd\htwo = -\int_\Sigma \omega^\perp \cdot \frac{\p\omega^{\parallel}}{\p\n} \,\dd\htwo + \int_\Sigma \n\cdot\na (\omega^{\parallel}\cdot\omega^\perp)\,\dd\htwo=-\int_\Sigma \omega^\perp \cdot \frac{\p\omega^{\parallel}}{\p\n} \,\dd\htwo.
\end{equation*}
Just as above, we get
\begin{equation}\label{TN}
\bigg|\int_\Sigma \omega^\parallel \cdot \frac{\p\omega^{\perp}}{\p\n} \,\dd\htwo \bigg|\leq C\int_\Sigma \Big(|\na u|^2 + |u|^2 \Big)\,\dd\htwo.
\end{equation}

\smallskip
{\bf 4.} To control the remaining term $\int_\Sigma \omega^\perp\cdot(\p\omega^\perp/\p\n)\,\dd\htwo$, let us first establish a simple {\em claim}: For any vertical vector field $\eta \in \G(T\Sigma^\perp)$, there holds
\begin{equation}
\frac{1}{2}\n \cdot\na (|\eta|^2) - (\na \cdot \eta)(\eta \cdot \n) = H_\Sigma(|\eta|^2).
\end{equation}
Indeed, write $\eta=\phi\n$ for some scalar function $\phi: \Sigma \map \R$. Then
\begin{align*}
\frac{1}{2}\n \cdot\na (|\eta|^2) - (\na \cdot \eta)(\eta \cdot \n)&=\sum_{i,j=1}^3 \Big(\eta^j \n^i\na_i \eta^j  - (\na_i \eta^i) (\eta^j\n^j)\Big)\\
&= \phi^2\sum_{i,j=1}^3 (\n^i\n^j\na_i\n^j-\na_i\n^i) = H_\Sigma\phi^2,
\end{align*}
where the last equality follows from $|\n|=1$ and the definition of mean curvature. As a side remark, this {\em claim} gives a geometric interpretation to the boundary term in the case of the ``slip-type'' boundary condition $\omega \times \n =0$ as in Lemma 2.6, \cite{bb2}.

In the above {\em claim} let us take $\eta=\omega^\perp$. Thanks to the incompressibility of $\omega$, we have $\na \cdot \omega^{\parallel} = -\na \cdot \omega^\perp$; thus, 
\begin{equation}
\omega^\perp \cdot \frac{\p\omega^{\perp}}{\p\n} = - (\na\cdot\omega^{\parallel}) |\omega^\perp| + H_\Sigma |\omega^\perp|^2.
\end{equation}
Therefore, using Eq.\,\eqref{na omega parallel} again and arguing as in \eqref{TT,NT}, one obtains
\begin{equation}\label{NN}
\bigg|\int_\Sigma \omega^\perp \cdot \frac{\p\omega^{\perp}}{\p\n} \,\dd\htwo \bigg|\leq C\int_\Sigma \Big(|\na u|^2 + |u|^2 \Big)\,\dd\htwo.
\end{equation}
Finally, we put together Eqs.\,\eqref{parallel and perp}\eqref{TT,NT}\eqref{TN}\eqref{NN} to complete the proof.  \end{proof}




The lemma below justifies the energy inequality \eqref{energy ineq} in the definition of weak solutions:

\begin{lemma}\label{lemma: energy ineq}
Let $u$ be a strong solution to  Eqs.\,\eqref{ns}\eqref{incompressible}\eqref{navier bc}\eqref{kinematic bc} on $[0,T_\star[ \times \Omega$. There exists a constant $c_1>0$ depending only on $\beta$, $\nu$ and $\|\Sigma\|_{L^\infty(\Omega)}$ such that 
\begin{equation}
\frac{1}{2}\frac{\dd}{\dd t}\int_\Omega |u|^2\,\dd x + \nu \int_\Omega |\na u |^2\,\dd x - c_1 \int_\Sigma |u|^2\,\dd \htwo \leq 0
\end{equation}
for each $t\in]0,T^\star[$.
\end{lemma}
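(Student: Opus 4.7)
The plan is to pair Eq.\,\eqref{ns} with $u$, integrate over $\Omega$, and use the incompressibility together with the Navier--kinematic boundary conditions to convert every surviving term either into the interior dissipation $\nu\int_\Omega |\na u|^2\,\dd x$, into a boundary integral expressible in terms of $|u|^2$, or into zero. This is the standard Leray--Hopf calculation adapted to the Navier setting; the one subtlety is the treatment of the trace of $\p u/\p \n$ on $\Sigma$.

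Testing \eqref{ns} against $u$ and using $\dv u=0$ and $u\cdot\n=0$, the time--derivative term yields $\tfrac{1}{2}\tfrac{\dd}{\dd t}\int_\Omega|u|^2\,\dd x$; the convective term equals $\tfrac12\int_\Omega u\cdot\na|u|^2\,\dd x$ and vanishes after integration by parts thanks to the kinematic condition, as does the pressure term $\int_\Omega \na p\cdot u\,\dd x$ for the same reason; the viscous term integrates by parts into
$$
-\nu\int_\Omega\Delta u\cdot u\,\dd x = \nu\int_\Omega|\na u|^2\,\dd x - \nu\int_\Sigma \frac{\p u}{\p\n}\cdot u\,\dd\htwo.
$$
The task reduces to controlling the last boundary trace from above by $c_1 \int_\Sigma|u|^2\,\dd\htwo$ for a constant depending only on $\beta,\nu,\|\two\|_{L^\infty(\Sigma)}$.

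To do this I would apply the Navier boundary condition \eqref{navier bc} with $\tau=u$ (which is tangential by \eqref{kinematic bc}), obtaining $\mathbf{t}\cdot u = -\beta|u|^2$ on $\Sigma$. Expanding the symmetric stress,
$$
\mathbf{t}\cdot u \;=\; \nu\sum_{i,j} u^i \n^j \na_j u^i + \nu\sum_{i,j} u^i \n^j \na_i u^j \;=\; \nu\,\tfrac{\p u}{\p\n}\cdot u \;+\; \nu\sum_{i,j} u^i\n^j \na_i u^j.
$$
The remaining asymmetric sum is pure curvature: differentiating the identity $u\cdot\n=0$ tangentially along $u$ on $\Sigma$ gives $\sum_{i,j} u^i\n^j \na_i u^j = -\sum_{i,j} u^i u^j \na_i \n^j$, which is pointwise bounded by $\|\two\|_{L^\infty(\Sigma)}|u|^2$. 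Combining,
$$
\nu\,\tfrac{\p u}{\p\n}\cdot u \;\leq\; \bigl(\nu\|\two\|_{L^\infty(\Sigma)}-\beta\bigr)|u|^2 \qquad \text{on } \Sigma,
$$
and choosing $c_1 := \nu\|\two\|_{L^\infty(\Sigma)}$ (or any larger positive constant) finishes the proof.

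The only real obstacle is the curvature extraction in the third paragraph: converting the asymmetric quantity $\sum_{i,j} u^i \n^j \na_i u^j$ into a pointwise expression controlled by $\two$. This is the same mechanism already used in Step 2 of Lemma \ref{lemma: enstrophy} to derive the equivalent form \eqref{eq formulation for navier bc} of the Navier boundary condition, so it is a direct analogue here applied to $u$ itself rather than to a derivative of $u$.
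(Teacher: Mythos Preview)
Your proof is correct and in fact cleaner than the paper's. Both arguments begin identically, reducing the problem to estimating $\nu\int_\Sigma u\cdot(\p u/\p\n)\,\dd\htwo$. At this point the paper takes a detour through the vorticity: it rewrites $u\cdot(\p u/\p\n) = u\cdot(\omega\times\n) + u\cdot\na(u\cdot\n) + \two(u,u)$, drops the middle term by the kinematic condition, and then bounds $|u\cdot(\omega\times\n)|\leq |u|\,|\omega^\parallel|$ using the pointwise estimate \eqref{parallel estimate} on $|\omega^\parallel|$ obtained in Lemma~\ref{lemma: enstrophy}. This yields $c_1=\beta+3\nu\|\two\|_{L^\infty(\Sigma)}$.

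You instead apply the Navier condition \eqref{navier bc} directly with the tangential test vector $\tau=u$, obtaining the exact identity $\nu\,u\cdot(\p u/\p\n) = -\beta|u|^2 - \nu\,\two(u,u)$ on $\Sigma$. This is the standard energy identity for Navier boundary conditions and gives the sharper constant $c_1=\nu\|\two\|_{L^\infty(\Sigma)}$ (indeed even $\max\{0,\nu\|\two\|_{L^\infty(\Sigma)}-\beta\}$), avoiding the vorticity altogether. Your route is more elementary and self-contained; the paper's route has the advantage of reusing machinery already set up for Lemma~\ref{lemma: enstrophy}, but at the cost of a looser bound.
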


\begin{proof}
This follows from standard energy estimates. Multiplying $u$ to the Navier--Stokes equations \eqref{ns}\eqref{incompressible} and integration by parts, we have
\begin{align}\label{...}
\frac{1}{2}\frac{\dd}{\dd t} \int_\Omega |u|^2\,\dd x + \nu\int_\Omega |\na u|^2\,\dd x -\nu\int_\Sigma u\cdot\frac{\p u}{\p\n}\,\dd \htwo = 0.
\end{align}
To estimate the last term, let $\{\p/\p x^i\}_{i=1}^3$ be an arbitrary local orthonormal frame on $\R^3$; then
\begin{align*}
u \cdot \frac{\p u}{\p \n} &= \sum_{i,j=1}^3 u^i\n^j\na_j u^i\\
&= \sum_{i,j=1}^3 \Big(u^i\n^j(\na_j u^i - \na_i u^j) + u^i\n^j\na_i u^j\Big)\\
&= \sum_{i,j,k=1}^3 \Big(\e^{kji}u^i\n^j\omega^k + u^i \na_i (u^j\n^j) - u^i u^j\na_i \n^j\Big)\\
&= u \cdot (\omega \times \n) + u\cdot \na (u\cdot\n) +\two(u,u).
\end{align*}
In view of the incompressibility of $u$ and that $\omega \times \n = \omega^\parallel\times\n$, we have
\begin{equation}
\bigg|\int_\Sigma u \cdot \frac{\p u}{\p\n}\,\dd\htwo\bigg| \leq \int_\Sigma \Big(|u| |\omega^\parallel| + \|\two\|_{L^\infty(\Sigma)}|u|^2 \Big)\,\dd\htwo.
\end{equation}
But $|\omega^\parallel|$ can be estimated by $|u|$ as in Eq.\,\eqref{parallel estimate}; by \eqref{...}, we may thus take $$c_1:=\beta+3\nu\|\two\|_{L^\infty(\Sigma)}$$ to complete the proof.    \end{proof}

Several bounds can be deduced immediately from Lemmas \ref{lemma: div curl}, \ref{lemma: enstrophy} and \ref{lemma: energy ineq}. First, by the trace inequality
\begin{equation*}
c_1 \int_{\Sigma}|u|^2\,\dd\htwo \leq \frac{\nu}{2}\int_{\Omega} |\na u|^2\,\dd x + {c_2}\int_{\Omega} |u|^2\,\dd x,
\end{equation*}
Lemma \ref{lemma: energy ineq} implies 
\begin{equation}\label{energy x}
\frac{\dd}{\dd t}\int_\Omega |u|^2\,\dd x + \nu \int_\Omega |\na u|^2\,\dd x \leq 2c_2 \int_\Omega |u|^2\,\dd x,
\end{equation}
where $c_2$ depends on $c_1, \Omega$ and $\nu^{-1}$. Then, thanks to Gr\"{o}nwall's lemma, one has
\begin{equation}\label{energy xx}
\|u(t,\cdot)\|_{L^2(\Omega)} \leq \|u_0\|_{L^2(\Omega)} e^{c_2 t} \qquad \text{ for each } t\in [0,T_\star[.
\end{equation}
Thus,
\begin{equation}\label{energy xxx}
\|\na u(t,\cdot)\|_{L^2(\Omega)} \leq \sqrt{{2 c_2}/{\nu}} \|u_0\|_{L^2(\Omega)} e^{c_2 t} \qquad \text{ for each } t\in [0,T_\star[.
\end{equation}
Next, applying again the trace inequality to Lemma \ref{lemma: enstrophy} yields that, for any given $\delta>0$,
\begin{align*}
&\frac{1}{2}\frac{\dd}{\dd t}\int_\Omega |\omega|^2\,\dd x + \frac{\nu}{2}\int_\Omega |\na \omega|^2\,\dd x \\
&\qquad\qquad\leq [{\rm Stretch}] + \delta\int_\Omega |\na \na u|^2\,\dd x + \frac{c_3}{\delta}\int_\Omega |\na u|^2\,\dd x + c_3 \int_\Omega |\na u|^2\,\dd x + c_3 \int_\Omega |u|^2\,\dd x.
\end{align*}
Here $c_3$ depends on $c_0$ and $\Omega$. By \eqref{energy xx}\eqref{energy xxx}, the last three terms on the right-hand side are bounded by a constant $c_4=C(c_3, c_2, \nu, \|u_0\|_{L^2(\Omega)}, T_\star,\delta)$. Moreover, we have
\begin{lemma}\label{lemma: na na u}
Let $u$ be a strong solution to  Eqs.\,\eqref{ns}\eqref{incompressible}\eqref{navier bc}\eqref{kinematic bc} on $[0,T_\star[ \times \Omega$. There exists $c_5$ depending only on $\Omega$ such that 
\begin{equation}
\int_\Omega |\na \na u|^2\,\dd x \leq c_5 \Big(\int_{\Omega} |\na \omega|^2\,\dd x + \int_\Omega |\omega|^2\,\dd x + \int_\Omega |u|^2\,\dd x\Big).
\end{equation}
\end{lemma}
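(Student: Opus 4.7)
The plan is to bound $\|\nabla^2 u\|_{L^2(\Omega)}$ via elliptic $H^2$-regularity for the boundary value problem furnished by Solonnikov's theory, after rewriting $\Delta u$ through the vorticity. Since $\mathrm{div}\, u = 0$, one has
$$-\Delta u = \nabla\times(\nabla\times u) - \nabla(\mathrm{div}\, u) = \nabla\times\omega\quad\text{in }\Omega,$$
so $\|\Delta u\|_{L^2(\Omega)}^2 \le \|\nabla\omega\|_{L^2(\Omega)}^2$. Coupled with the Navier and kinematic boundary conditions, this is precisely the type of boundary value problem to which the results of Sect.\,\ref{sec: green matrix} apply.

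For the elliptic estimate, I would localise $u$ to a finite cover of $\overline{\Omega}$ by interior and boundary charts. On each interior chart, the classical Calder\'on--Zygmund estimate for the Laplacian gives $\|\nabla^2 u\|_{L^2}^2 \lesssim \|\Delta u\|_{L^2}^2 + \|u\|_{H^1}^2$. On each boundary chart, after straightening $\Sigma$ and using the reformulation \eqref{eq formulation for navier bc} to cast the Navier and kinematic conditions in the diagonal regular oblique form of Eq.\,\eqref{model system bc}, the same bound follows from the $H^2$ a priori estimate associated with Lemma \ref{lemma: solonnikov} (Petrovsky-type elliptic systems with complementing boundary conditions in the ADN sense). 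Summing over the cover and absorbing the commutators of the cut-offs into $\|u\|_{H^1(\Omega)}$ yields
$$\|\nabla^2 u\|_{L^2(\Omega)}^2 \le C(\Omega)\bigl(\|\Delta u\|_{L^2(\Omega)}^2 + \|u\|_{H^1(\Omega)}^2\bigr).$$

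Substituting the Poisson identity above and eliminating $\|\nabla u\|_{L^2}^2$ by means of Lemma \ref{lemma: div curl} (which bounds it by a constant multiple of $\|u\|_{L^2}^2 + \|\omega\|_{L^2}^2$) completes the proof. The main obstacle is the localisation and diagonalisation step in the boundary charts: rewriting the Navier condition in the adapted frame generates geometric lower-order terms involving $\mathrm{II}$ and $\nabla\mathrm{II}$, as already visible in Eq.\,\eqref{na omega parallel}, and one must verify that these terms land in the $\|u\|_{H^1(\Omega)}$ contribution rather than spoiling the principal $\|\nabla^2 u\|_{L^2}$ bound. Once this reduction to the model half-space problem has been carried out, the remainder is a purely algebraic combination of estimates already assembled.
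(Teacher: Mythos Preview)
The paper does not actually prove this lemma: its entire proof reads ``See Theorem 3.3, p.729 in \cite{cq}.'' Your proposal therefore goes well beyond what the paper supplies, giving an independent sketch of the $H^2$-regularity argument that underlies the cited result.

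Your outline is essentially the correct route: the identity $-\Delta u=\nabla\times\omega$ together with elliptic $H^2$-regularity for the boundary value problem, followed by Lemma~\ref{lemma: div curl} to absorb $\|\nabla u\|_{L^2}$. One remark: the ``diagonalisation'' step you flag as the main obstacle is not actually needed for the $H^2$ a priori estimate. The ADN/Solonnikov theory (Lemma~\ref{lemma: solonnikov} and the references \cite{adn1,adn2,s1,s2}) delivers the Schauder and $L^2$ estimates for any complementing boundary condition, diagonal or not; the diagonal structure in Sect.~\ref{sec: green matrix} is exploited later in the paper only to obtain an explicit scalar Green's function for the pointwise kernel estimates in Sect.~\ref{sec: main estimates}. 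So for the present lemma it suffices to verify that the Navier and kinematic conditions are complementing (which they are, being a regular oblique derivative condition coupled with a Dirichlet condition), and the lower-order curvature terms you worry about are then automatically subsumed in the ADN framework. With that simplification your argument goes through; the constant $c_5$ depends on $\Omega$ through the ADN constants and $\|\two\|_{C^1(\Sigma)}$, consistent with the paper's statement.
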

\begin{proof}
	This is a weaker result than Theorem 3.3, p.729 in Chen--Qian \cite{cq}.  \end{proof}

Therefore, choosing $\delta:=\nu/(4 c_5)$ and invoking once more \eqref{energy xx}\eqref{energy xxx}, one may conclude:
\begin{theorem}[Energy Estimate]\label{thm: energy estimate} Let $u$ be a strong solution to Eqs.\,\eqref{ns}\eqref{incompressible}\eqref{navier bc}\eqref{kinematic bc} on $[0,T^\star[ \times \Omega$. There is a constant $M$ depending on $\Omega, \beta, \|\two\|_{C^1(\Sigma)}, \nu, \|u_0\|_{L^2(\Omega)}$ and $T_\star$, such that \begin{equation}\label{enstrophy xx}
\frac{1}{2}\frac{\dd}{\dd t}\int_\Omega |\omega|^2\,\dd x + \frac{\nu}{4}\int_\Omega |\na \omega|^2\,\dd x \leq  [{\rm Stretch}] + M.
\end{equation}
The vorticity stretching term $[Stretch]$ is defined in Eq.\,\eqref{stretch}. Moreover, the supremum of $\|u(t,\cdot)\|_{W^{1,2}(\Omega)}$ is bounded on $[0,T_\star[$ by \eqref{energy x}\eqref{energy xx}.
\end{theorem}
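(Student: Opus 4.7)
The plan is to combine the three preceding lemmas in the chain that the paragraph immediately before the theorem statement has already laid out; the theorem is really a book-keeping corollary. Starting from Lemma \ref{lemma: enstrophy}, the boundary term $c_0\int_\Sigma(|\na u|^2+|u|^2)\,\dd\htwo$ must be pushed back into the interior. For $|u|^2$ this is the standard trace inequality $\int_\Sigma|u|^2\,\dd\htwo\leq C(\int_\Omega|\na u|^2+|u|^2\,\dd x)$; for $|\na u|^2$ the analogous trace estimate with a small tunable constant $\delta$ produces an $\int_\Omega |\na\na u|^2\,\dd x$ term, reproducing the intermediate inequality already displayed in the excerpt.

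Now the role of Lemma \ref{lemma: na na u} becomes clear: it is exactly what allows us to replace $\int_\Omega|\na\na u|^2\,\dd x$ by a sum of $\int_\Omega|\na\omega|^2\,\dd x$ (the term we want on the left) plus the lower-order quantities $\int_\Omega(|\omega|^2+|u|^2)\,\dd x$. Substituting and choosing $\delta=\nu/(4c_5)$ makes $\delta c_5\int_\Omega|\na\omega|^2\,\dd x=\tfrac{\nu}{4}\int_\Omega|\na\omega|^2\,\dd x$, which is then absorbed from the right into the $\tfrac{\nu}{2}\int_\Omega|\na\omega|^2\,\dd x$ produced by Lemma \ref{lemma: enstrophy}, leaving exactly $\tfrac{\nu}{4}\int_\Omega|\na\omega|^2\,\dd x$ on the left-hand side.

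At this point the only remaining task is to bound the residual lower-order quantities $\int_\Omega|u|^2\,\dd x$, $\int_\Omega|\na u|^2\,\dd x$ and $\int_\Omega|\omega|^2\,\dd x$ by constants depending only on the stated parameters. For the first two, the estimates \eqref{energy xx} and \eqref{energy xxx} (which were in turn deduced from Lemma \ref{lemma: energy ineq} via the trace inequality and Grönwall) give uniform bounds on $[0,T^\star[$; for the third, the pointwise inequality $|\omega|\leq\sqrt{2}|\na u|$ reduces it to the second. Lumping all these constants into a single $M=M(\Omega,\beta,\|\two\|_{C^1(\Sigma)},\nu,\|u_0\|_{L^2(\Omega)},T^\star)$ yields \eqref{enstrophy xx}, and the $\sup_{t\in[0,T^\star[}\|u(t,\cdot)\|_{W^{1,2}(\Omega)}$ claim is a direct restatement of \eqref{energy xx}--\eqref{energy xxx}.

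There is no genuine obstacle here: the theorem is a packaging step rather than a new estimate, and the only point requiring minor care is verifying that all constants produced by successive applications of the trace inequality and Grönwall depend only on the quantities listed in the statement. The payoff of this packaging is that in the subsequent sections we need only control the stretching term $[{\rm Stretch}]$ to close a Grönwall argument on $\|\omega(t,\cdot)\|_{L^2(\Omega)}^2$, which is precisely the strategy announced in the introduction.
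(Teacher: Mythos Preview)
Your proposal is correct and follows essentially the same route as the paper: apply the trace inequality to the boundary term in Lemma~\ref{lemma: enstrophy}, invoke Lemma~\ref{lemma: na na u} with the choice $\delta=\nu/(4c_5)$ to absorb the resulting $\int_\Omega|\na\na u|^2\,\dd x$ into the left-hand side, and control all remaining lower-order terms via \eqref{energy xx}--\eqref{energy xxx}. Your explicit use of $|\omega|\leq\sqrt{2}\,|\na u|$ to handle the $\int_\Omega|\omega|^2\,\dd x$ term is a minor clarification the paper leaves implicit.
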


\section{Boundary Regularity and Alignment of Vorticity Up to the Boundary}

In this section let us prove Theorem \ref{thm: new}. It is a generalisation of Theorem \ref{thm: main} (see Sect.\,5), with the more general diagonal oblique derivative boundary condition \eqref{homog bc} considered on arbitrary regular curvilinear domains rather than the Navier and kinematic boundary conditions \eqref{navier bc}\eqref{kinematic bc} on round balls, half-spaces and right cylinders. 
\begin{theorem}\label{thm: new}
Let $\Omega \subset \R^3$ be a sufficiently regular domain. Let $u$ be a weak solution to the Navier--Stokes equations \eqref{ns}\eqref{incompressible} on $[0,T_\star[ \times \Omega$  with the regular oblique derivative boundary condition \eqref{homog bc}. Assume that the energy estimate in Theorem \ref{thm: energy estimate} is valid for strong solutions. Then, under the assumptions of Theorem \ref{thm: main}, {\it i.e.}, if the vorticity turning angle $\theta$ satisfies
\begin{equation}\label{assumption'}
|\sin \theta(t;x,y)| \leq \rho \sqrt{|x-y|} \qquad \text{ for all } t \in [0,T^\star[,\, x,y\in\overline{\Omega},
\end{equation}
for some $\rho >0$, then $u$ is also a strong solution on $[0,T^\star[\times\Omega$. 
\end{theorem}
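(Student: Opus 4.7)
The plan is to bound the vortex stretching term $[\text{Stretch}]$ of Eq.\,\eqref{stretch} in the form
\begin{equation*}
[\text{Stretch}] \leq \tfrac{\nu}{8}\int_\Omega |\na\omega|^2\,\dd x + C\big(1 + \|\omega\|_{L^2(\Omega)}^2\big)^\alpha
\end{equation*}
for some $\alpha > 1$ and some constant $C=C(\rho,\Omega,\nu,\beta,T^\star,\|u_0\|_{H^1})$. Plugging this into the enstrophy bound \eqref{enstrophy xx} produces an autonomous super-linear ODE inequality for $\|\omega(t,\cdot)\|_{L^2}^2$; Gr\"{o}nwall's lemma then furnishes a uniform bound on $[0,T^\star[$, and combined with Lemma \ref{lemma: na na u} and Theorem \ref{thm: energy estimate}, this upgrades $u$ to a strong solution. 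To derive the bound, I would first represent $\scal u$ as a singular integral of $\omega$: from incompressibility one has $-\Delta u = \na\times\omega$, and together with the diagonal oblique derivative condition $\NN u = 0$ on $\Sigma$, Lemma \ref{lemma: solonnikov} produces a scalar Green's function $g = g^\good + g^\bad$ satisfying $u^i(x) = \int_\Omega g(x,y)(\na\times\omega)^i(y)\,\dd y$. Symmetrising the gradient and integrating by parts in $y$ yields $\scal u(x) = \int_\Omega K(x,y)\,\omega(y)\,\dd y + (\text{boundary terms})$ with $K$ the symmetrised $\na_x\na_y g$; the boundary contributions are absorbed using the trace inequality and the pointwise control $|\omega^\parallel|\lesssim|u|$ along the same lines as in the proof of Lemma \ref{lemma: enstrophy}.

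Next I would cover $\overline{\Omega}$ by finitely many interior and boundary charts via a smooth partition of unity. In each interior chart the leading part of $K$ is the classical Biot--Savart kernel, whose contraction with $\omega(x)\otimes\omega(y)$ has the antisymmetric structure $\big(\widehat{x-y}\cdot\omega(x)\times\omega(y)\big)/|x-y|^3$; the alignment hypothesis \eqref{assumption'} then replaces the $|x-y|^{-3}$ singularity by $\rho|x-y|^{-5/2}$. A Hardy--Littlewood--Sobolev / Cauchy--Schwarz interpolation argument---following Constantin--Fefferman \cite{cf} and Beir\~{a}o da Veiga--Berselli \cite{bb1}---bounds the interior contribution to $[\text{Stretch}]$ by $C\|\omega\|_{L^2}^{3/2}\|\na\omega\|_{L^2}^{1/2}$, which Young's inequality absorbs into $\tfrac{\nu}{16}\|\na\omega\|_{L^2}^2 + C\|\omega\|_{L^2}^6$. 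The contribution of $g^\good$ gains $\delta>1/2$ over the singular kernel and is handled by standard Calder\'{o}n--Zygmund bounds without invoking the alignment at all.

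The main obstacle is the analysis of the boundary charts. After a $C^{3,\alpha}$-diffeomorphism straightening $\Sigma$, $g$ coincides modulo smooth remainders with the explicit model kernel \eqref{green matrix for oblique} on $\R^3_+$: a sum of the Newtonian potential $\G(x-y)$, the mirror term $-\G(x-\yst)$, and the oblique correction $-\tfrac{2 b^{(i)}_3}{3|x-\yst|}\TI(x,\yst)$. The first contributes exactly the interior estimate above. For the mirror and oblique pieces, applying $\na_x\na_y$ produces kernels of order $|x-\yst|^{-3}$; since $\yst\notin\R^3_+$, the kernel is tame away from the boundary (thanks to $|x-\yst|\geq x^3+y^3$), but when $x^3,y^3\to 0^+$ simultaneously one has $|x-\yst|\asymp|x-y|$ and the singularity is restored. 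In this regime I would exploit that $\widehat{x-\yst}\to\widehat{x-y}$, and replace $\omega(y)$ by its trace on $\Sigma$ up to an error controllable by $\int|y-\yst||\na\omega|$, so that the alignment hypothesis \eqref{assumption'} can again be applied and yields the $\rho\sqrt{|x-y|}$ gain over the $|x-\yst|^{-3}$ singularity. The smoothness of $\TI(x,\yst)$ (to be verified in Lemma \ref{lemma: Theta kernel}) ensures the oblique correction obeys the same schematic estimates after $\na_x\na_y$. Assembling the interior bound, the boundary bound and the $g^\good$ bound yields the target inequality for $[\text{Stretch}]$, and Gr\"{o}nwall closes the argument as described above.
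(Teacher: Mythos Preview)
Your overall architecture matches the paper's proof closely: bound $[\text{Stretch}]$ via the Green's-matrix representation of $\scal u$, localise by a partition of unity into interior and boundary charts, invoke the Constantin--Fefferman determinant structure together with the alignment hypothesis to reduce the $|x-y|^{-3}$ singularity to $|x-y|^{-5/2}$, then close with Hardy--Littlewood--Sobolev, interpolation, Young, and Gr\"{o}nwall. The treatment of the interior charts, of $g^{\good}$, and of the boundary term arising from integration by parts (via $|\omega^{\parallel}|\lesssim |u|$ and trace inequalities) is essentially as in the paper.

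The one place where your plan diverges --- and where it contains an unnecessary detour --- is the handling of the mirror and oblique pieces of the kernel in the boundary charts. You propose to exploit $\widehat{x-\yst}\to\xyhat$ as $x^3,y^3\to 0^+$ and to replace $\omega(y)$ by its boundary trace, paying an error of size $\int |y-\yst|\,|\na\omega|$. This is neither needed nor the most robust route. The crucial observation (used repeatedly in the paper, see Lemma~\ref{lemma: K5, 3} and the estimates for $J_{212}$) is that for \emph{any} unit vector $\widehat{a}$ one has
\[
\big|\det\big(\widehat{a},\,\what(x),\,\what(y)\big)\big|\leq |\sin\theta(x,y)|,
\]
since the determinant equals the volume of the parallelepiped and is bounded by the length of the projection of $\what(y)$ onto $[\what(x)]^\perp$. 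Consequently there is no need to replace $\widehat{x-\yst}$ by $\xyhat$ or to pass to traces of $\omega$ on $\Sigma$: the alignment hypothesis \eqref{assumption'} applies directly to $\det\big(\widehat{x-\yst},\what(x),\what(y)\big)$ and yields the same $\sqrt{|x-y|}$ gain. Your proposed trace-replacement would introduce an extra $\na\omega$ factor that then has to be absorbed, which is doable but adds a layer of bookkeeping that the direct argument avoids entirely. Once you use the determinant observation in this sharper form, your boundary-chart analysis collapses to the same estimate as the interior one (Propositions~\ref{proposition: J211 vortex stretching} and \ref{proposition: J212 vortex stretching} in the paper), and the remainder of your outline goes through as written.
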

We emphasise that in the assumption \eqref{assumption'} above, the inequality holds for $x,y \in \overline{\Omega} = \Omega \cup \Sigma$; that is, we require that the vorticity is coherently aligned {\em up to the boundary}.

\begin{remark}\label{remark: regular oblique}The theorem above also applies to the Dirichlet condition $u \equiv 0$ on $\partial\Omega$.  First of all, the discussions in Sect.\,\ref{Green's Matrices} remain to be valid for the Dirichlet condition; in particular, Eqs.\,\eqref{Poisson eq}\eqref{homog bc} form an elliptic system of the Petrovsky type. In addition, as remarked at the end of Sect.\,2, the Green's matrix for the Dirichlet condition is the special case of Eqs.\,\eqref{green matrix for oblique}\eqref{theta in green matrix}  with vanishing $\Theta^{(i)}$-terms. As a consequence, all the arguments in the current section will carry through for the Dirichlet condition, with the modification that $J_{213}$ and its derivatives are all equal to zero ({\it i.e.}, Sect.\,\ref{subsec J213} holds trivially). It suggests that the geometric boundary regularity criterion in this paper may persist under the (formal) limit $\beta \uparrow +\infty$ of the Navier boundary condition \eqref{navier bc}.
\end{remark}


To prove Theorem \ref{thm: new}, in Sect.\,4.1 we first localise the problem to small coordinates charts in the interior or near the boundary. The key is to  estimate the vortex stretching term $[Stretch]$, which is carried out in Sects.\,4.2--4.10. Finally, we conclude the proof in Sect.\,4.11, thanks to the preliminary energy estimates obtained in Sect.\,3.

Let us also comment on the general strategy for the proof. It is based on the following continuation argument. From the definition of weak and strong solutions, we know that 
\begin{equation}
\limsup_{t \uparrow T} \int_\Omega |\omega(t)|^2\,\dd x =\infty,
\end{equation} 
is a {\em breakdown criterion} for strong solutions. That is, a weak solution $u$ on $[0,T[$ cannot be strong beyond the time $T$ if the above quantity blows up. Therefore, we assume that $u$ is a strong solution on $[0,T[$ for some $T \leq T^\star$. Utilising the energy estimate in Theorem \ref{thm: energy estimate} and the bound for $[Stretch]$ in the current section, we prove that the above blowup does not occur. Thus, $u$ is strong on $[0,T + \delta]$ for some $\delta >0$, which gives us the contradiction. Therefore, $u$ is strong all the way up to $T^\star$.

\subsection{Localisation}
We adopt Solonnikov's method of localisation in the construction of Green's matrices; see p.150 in \cite{s1} and p.609 in \cite{bb2}. For the convenience of the readers, let us briefly summarise the construction in four steps below:

{\bf 1.} There exists a finite family of open cover for $\overline{\Omega}$, written as
\begin{equation}
\big\{U_a\big\}_{a\in \I} \sqcup \big\{U_b\big\}_{b\in \B},
\end{equation}
where $U_a \cap \Sigma = \emptyset$ for each $a\in\I$, and $U_b \cap \Sigma \neq \emptyset$ for each $b \in \B$. Each $U_a$ is known as an interior chart, and each $U_b$ as a boundary chart. 

{\bf 2.} Each interior chart is a cube: there exists $d_1>0$ (independent of $a\in\I$) such that
\begin{equation}
U_a = \big\{(x^1, x^2, x^3)\in\R^3: |x^i-x_a^i| \leq d_1 \big\} \qquad \text{ for some } x_a \in \R^3,
\end{equation}
which also satisfies
\begin{equation}
{\rm dist} \,(U_a, \Sigma)  \geq d_1.
\end{equation}

{\bf 3.} In each boundary chart $U_b$, we can find a boundary point $x_b \in \Sigma$, a local Euclidean coordinate system $\{z_b^1, z_b^2, z_b^3\}$, and a $C^2$ map $\mathcal{F}_b: [0,d_2]^2 \map \R$ such that
\begin{equation}
|z_b^1|, |z_b^2| \leq d_2, \qquad 0\leq z_b^3 - \mathcal{F}_b(z_b^1, z_b^2) \leq 2d_2
\end{equation}
for some constant $d_2>0$ independent of $b \in \B$, and that the portion of the boundary $\Sigma \cap U_b$ in $z_b$ coordinates is the graph of $\mathcal{F}_b$. 

{\bf 4.} Let $\{\chi_a\}_{a\in\I}\cup\{\chi_b\}_{b\in\B}$ be a $C^\infty$ partition of unity subordinate to the cover in Step 1. That is, $0\leq\chi_c\leq 1$, $\chi_c \in C^\infty_c(\overline{\Omega})$, $\sum_{c\in \I \cup \B}\chi_c (x)= 1$ for each $x\in\overline{\Omega}$ and ${\rm spt}\,(\chi_c)\in U_c$ for each $c\in \I$ or $\B$.

With the help of the above steps, we can now localise the Green's matrices. Indeed, in Step 3 above let us further introduce the  notations:
\begin{equation}
z_b := \OO_b(x-x_b) \qquad \text{ for } \OO_b \in SO(3),
\end{equation}
\begin{equation}
\big((z')_b^1, (z')_b^2, (z')_b^3\big) := \big(z_b^1, z_b^2, z_b^3-\mathcal{F}_b(z_b^1, z_b^2)\big)\equiv \widetilde{\mathcal{F}}_b(z_b)
\end{equation}
and
\begin{equation}\label{Tb}
T_b(x):=\widetilde{\mathcal{F}}_b\circ\OO_b(x-x_b).
\end{equation}
That is, $\OO_b$ is the rotation of Euclidean coordinates, and $\widetilde{\mathcal{F}}_b \in C^2(U_b; [0,d_2]^2\times[0,2d_2])$ is the boundary straightening map, which satisfies
\begin{equation}
T_b(\Sigma\cap U_b) \subset \{(z')_b^3=0\}.
\end{equation}
Then, setting
\begin{equation}\label{d_3}
d_3:=\frac{\min\{d_1,d_2\}}{4},
\end{equation} 
we can compute $u(x)$ from the following explicit integral formula (comparing with Eq.\,(29) in Beir\~{a}o da Veiga--Berselli \cite{bb2}, p.610). 
\begin{lemma}\label{lemma: representation formula}
Let $u$ be a strong solution to Eqs.\,\eqref{Poisson eq}\eqref{homog bc}. Fix a cut-off function $\zeta \in C^\infty_c(\R)$ such that $0 \leq \zeta\leq 1$, $\zeta$ is non-increasing on $\R$, $\zeta \equiv 1$ on $[0,1/4]$, $\zeta \equiv 0$ on $[3/4, \infty[$ and $\|\zeta'\|_{C^0(\R)} \leq 4$. Then we have
\begin{align}\label{u}
u^i(x) &= \sum_{j=1}^3\sum_{a\in\I}\int_{\Omega}  \chi_a(y) \bigg\{\frac{\delta_{ij}}{4\pi|x-y|} (\na\times\omega)^j(y)\bigg\} \zeta\bigg(\frac{|x-y|}{d_3}\bigg)\,\dd y \nonumber\\
&\qquad+\sum_{j=1}^3\sum_{b\in\B}\int_\Omega\chi_b(y) \bigg\{\frac{\delta_{ij}}{4\pi}\bigg[\frac{1}{|T_bx-T_by|} - \frac{1}{|T_bx-(T_by)^\star|}\times\nonumber\\
&\qquad\qquad\qquad \times\Big(1+\frac{2b_3^{(i)}}{3}\TI\big(T_bx, (T_by)^\star\big) \Big)\bigg]
(\na\times\omega)^j(y)\bigg\}\zeta\bigg(\frac{|T_bx-T_by|}{d_3}\bigg)\,\dd y\nonumber\\
&\qquad+\sum_{j=1}^3\int_\Omega {\GG^\good}_{ij}(x,y) (\na\times\omega)^j(y)\,\dd y\nonumber\\
&=:J_1(x) + J_2(x) + J_3(x),
\end{align}
where $\GG^\good$ satisfies the estimate in Eq.\,\eqref{good}, and $\TI$ is given by Eq.\,\eqref{theta in green matrix}.
\end{lemma}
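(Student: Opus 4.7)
The plan is to start from Lemma \ref{lemma: solonnikov}, which already provides the global representation
$$u^i(x)=\sum_{j=1}^3\int_\Omega \GG_{ij}(x,y)(\na\times\omega)^j(y)\,\dd y,$$
and then to analyse the Green's matrix $\GG$ chart by chart using the open cover $\{U_a\}_{a\in\I}\cup\{U_b\}_{b\in\B}$ and its subordinate partition of unity $\{\chi_c\}$. Since the boundary operator $\NN$ is diagonal with constant coefficients, $\GG_{ij}=g_i\delta_{ij}$ is itself diagonal, so only three scalar kernels $g_i$ need to be tracked and the inner sums over $j$ in \eqref{u} collapse to single terms, as \eqref{u} already reflects. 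The role of the cut-off $\zeta$ is to isolate the diagonal singularity of each $g_i$, so that the smooth tail together with the ``curvature corrections'' produced by Solonnikov's parametrix can be collected into the globally defined $\GG^\good$ sitting inside $J_3$.

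For the interior contribution, each $U_a$ sits at distance $\geq 4d_3$ from $\Sigma$, so no boundary operator enters the local parametrix; iterating the free-space Newtonian kernel $(4\pi|x-y|)^{-1}$ as a parametrix for $-\Delta$ yields $\GG_{ij}(x,y)=(4\pi|x-y|)^{-1}\delta_{ij}+\text{(good)}$ on $U_a\times U_a$. Multiplying by $\chi_a(y)$, inserting $\zeta(|x-y|/d_3)$, and noting that $\bigl(1-\zeta(|x-y|/d_3)\bigr)(4\pi|x-y|)^{-1}$ is bounded and smooth on its support $\{|x-y|\geq d_3/4\}$ and hence absorbable into $\GG^\good$, the sum over $a\in\I$ reproduces exactly $J_1(x)$. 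For the boundary charts I would pull back via the straightening map $T_b$ from \eqref{Tb}: after this pull-back, the principal part of $-\Delta$ and of $\NN$ agree with the constant-coefficient model problem \eqref{model system}--\eqref{model system bc} on $\R^3_+$, whose Green's matrix is given in closed form by \eqref{green matrix for oblique}--\eqref{theta in green matrix}. Iterating Solonnikov's parametrix against the $T_b$-pulled-back Laplacian then shows that $\GG_{ij}$ equals this composed half-space kernel plus a good remainder on $U_b\times U_b$. Multiplying by $\chi_b(y)$, inserting $\zeta(|T_bx-T_by|/d_3)$ to truncate the diagonal singularity, and summing over $b\in\B$ produces $J_2(x)$, with the $(1-\zeta)$-tails again absorbed into $\GG^\good$.

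The main obstacle will be verifying the ``curvature correction'' step: one must show that the remainder obtained when the half-space parametrix is iterated against the genuine pulled-back operator on $U_b$ satisfies the good estimate \eqref{good} with $\delta>1/2$. This is the heart of Solonnikov's theory in \cite{s1,s2} (and is recalled in \cite{bb2}), and I would invoke it as a black box after choosing $d_2$ small enough that $\|\na\widetilde{\mathcal{F}}_b\|_{C^1}$ is uniformly small and the boundary $\Sigma$ is of class at least $C^{3,\alpha}$. The remaining book-keeping---matching the $\chi_c$ localisation of $y$ against the $\zeta$ localisation of the diagonal, and checking that overlap terms are smooth---is routine, and all such smooth contributions are safely collected into the single $J_3$ term.
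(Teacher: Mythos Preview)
Your proposal is correct and follows essentially the same route as the paper: start from the global Green's matrix representation of Lemma~\ref{lemma: solonnikov}, localise via the partition of unity, use the free-space Newtonian kernel in interior charts and the explicit half-space kernel \eqref{green matrix for oblique} in boundary charts after straightening via $T_b$, and absorb all $(1-\zeta)$-tails and curvature corrections into $\GG^\good$. The only cosmetic difference is that where you speak of ``iterating Solonnikov's parametrix against the $T_b$-pulled-back Laplacian,'' the paper argues more directly by Taylor expansion, showing $|T_bx-T_by|=|x-y|+\smallo(|x-y|)$ and $|\GG_{ij}(T_bx,T_by)-\GG_{ij}(x,y)|=\smallo(|x-y|)$ so that the substitution introduces only good errors; both formulations ultimately rest on the same Solonnikov decomposition you cite as a black box.
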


\begin{proof}
As Eqs.\,\eqref{Poisson eq}\eqref{homog bc} are an elliptic system of Petrovsky type, by Lemma \ref{lemma: solonnikov} we can find one single Green's matrix $\GG$ such that 
	\begin{align}
	u^i(x) &= \sum_{a\in\I}\sum_{j=1}^3\int_\Omega \GG_{ij}(x,y)\chi_a(y)(\na\times\omega)^j(y)\,\dd y + \sum_{b\in\B}\sum_{j=1}^3\int_\Omega \GG_{ij}(x,y)\chi_b(y)(\na\times\omega)^j(y)\,\dd y\nonumber\\
	&= u^i_{\rm int} (x) + u^i_{\rm bdry} (x),
	\end{align}
where $\{\chi_a\}_{a\in\I}\sqcup\{\chi_b\}_{b\in\B}$ is the aforementioned partition-of-unity.

For $u^i_{\rm int} (x)$, let us decompose each of its summands as
\begin{align}
u^i_{\rm int, \, near}(x) +u^i_{\rm int, \, far}(x) = &\int_\Omega \GG_{ij}(x,y)\chi_a(y)(\na\times\omega)^j(y)\bigg\{\zeta\bigg(\frac{|x-y|}{d_3}\bigg)\bigg\}\,\dd y \nonumber\\
 +\,&  \int_\Omega \GG_{ij}(x,y)\chi_a(y)(\na\times\omega)^j(y)\bigg\{1- \zeta\bigg(\frac{|x-y|}{d_3}\bigg)\bigg\}\,\dd y.
\end{align}
The non-zero contribution to $u^i_{\rm int, \, near}(x)$ comes from $\{y\in U_a: |y-x|\leq 3d_3/4\}$, which is uniformly away from the boundary $\Sigma$. Thus
\begin{equation}\label{a}
\GG_{ij}(x,y)\1_{\{y\in U_a: |y-x|\leq 3d_3/4\}} = \frac{\delta_{ij}}{4\pi}\G(x,y) + \GG^{\good}_{ij}(x,y),
\end{equation}
where the leading term $\frac{\delta_{ij}}{4\pi}\G(x,y)$ is the Green's matrix on $\R^3$, and the error term $\GG^{\good}$ satisfies \eqref{good} (the explicit form of $\GG^\good$ may differ from line to line, though). On the other hand, the non-zero contribution to $u^i_{\rm int, \, far}(x)$ comes only from $\{y\in U_a: |y-x| > d_3/4\}$, but the Green's matrix $\GG_{ij}$ is smooth away from the diagonal $\{x=y\}\subset \R ^3 \times \R^3$. That is, 
\begin{equation}\label{b}
\GG_{ij}(x,y)\1_{\{y\in U_a: |y-x| > d_3/4\}} = \GG^\good_{ij}(x,y).
\end{equation}

For the boundary term $u^i_{\rm bdry}(x)$, we apply the boundary-straightening map $T_b$ in each boundary chart; {\it cf.} Eq.\,\eqref{Tb}. Indeed, for each $x\in U_b$, $b\in\B$, arguments analogous to those for the $u^i_{\rm int}(x)$ term show that
\begin{align}\label{c}
u_{\rm bdry}^i(x) &= \sum_{b\in\B}\sum_{j=1}^3\int_\Omega \GG_{ij}(x,y) \chi_b(y) (\na\times\omega)^j(y)  \zeta\bigg(\frac{|x-y|}{d_3}\bigg)\,\dd y  \nonumber\\
&\qquad\qquad\qquad+ \sum_{j=1}^3\int_\Omega\GG^\good_{ij}(x,y) (\na\times\omega)^j(y)\,\dd y.
\end{align}

We further {\em claim} that
\begin{align}\label{d}
&\int_\Omega \GG_{ij}(x,y) \chi_b(y) (\na\times\omega)^j(y)  \zeta\bigg(\frac{|x-y|}{d_3}\bigg)\,\dd y \nonumber\\
 =\,& \int_\Omega \GG_{ij}(T_bx,T_by) \chi_b(y) (\na\times\omega)^j(y)  \zeta\bigg(\frac{|T_bx-T_by|}{d_3}\bigg)\,\dd y + \int_\Omega\GG^\good_{ij}(x,y) (\na\times\omega)^j(y).
\end{align}
Indeed, by the definition of $T_b$ we have
\begin{equation}\label{DT matrix}
\na T_b (x) = \begin{bmatrix}
1&0&0\\
0&1&0\\
-\na_1\mathcal{F}_b&-\na_2\mathcal{F}_b&1
\end{bmatrix} \cdot \OO_b(x-x_b), \qquad \text{ where } \OO_b \in O(3).
\end{equation} 
So  $|\det (\na T_b)| = 1$; thus $\na T_b(\cdot) \in O(3)$ modulo a translation in $\R^3$. It means that the boundary-straightening map $T_b$ is almost a Euclidean isometry. Now, Taylor expansion gives us
\begin{equation}
|T_b x-T_b y| = |x-y| + \smallo (|x-y|)
\end{equation}
and
\begin{equation}
|\GG_{ij} (T_b x, T_by) -\GG_{ij}(x,y)| = \smallo(|x-y|),
\end{equation}
where $\smallo$ is the usual ``small-o'' notation in the limit of $|x-y|\map 0$. These higher order terms contribute to $\GG^\good$, as they cancel the singularities in the denominator of $\GG_{ij}$; see Lemma \ref{lemma: solonnikov}. Therefore, the {\em claim}  \eqref{d} follows.

Finally,  in the boundary chart $U_b$, the boundary condition  pulled back by $T_b$ is in the form of \eqref{model system bc}, which is the oblique derivative boundary condition on the half-space. Thus, choosing the local coordinate frame $\{x^1,x^2,x^3\}$ such that $\p/\p x^3 = \n$, we have
\begin{equation}\label{e}
\GG_{ij}(x,y) \1_{\{(x,y) \in U_b \times U_b\}} = \frac{\delta_{ij}}{4\pi} \bigg\{\G(T_bx-T_by)-\G(T_bx-(T_by)^\star) - \frac{2b_3^{(i)}}{3} \TI\big(T_bx, (T_b y)^\star\big) \bigg\}
\end{equation}
by \eqref{green matrix for oblique} (also see Sect.\,6.7 in Gilbarg--Trudinger \cite{gt}).  Eqs.\,\eqref{a}\eqref{b}\eqref{c}\eqref{d} and \eqref{e} together complete the proof.  \end{proof}

In the proof we have deduced the following identity:
\begin{equation}\label{DT}
\{\na T_b\}^i_j(x) \equiv \na_i (T_b x)^j = \sum_{k=1}^3 \OO^k_j\big(\delta^i_k - \delta^i_3\na_k\mathcal{F}_b\big)(x-x_b)\qquad \text{ for each } i,j \in \{1,2,3\},
\end{equation}
where $\OO^k_j \in O(3)$; see Eq.\,\eqref{DT matrix}. It will be repeatedly used in the subsequent development. 

\subsection{Potential Estimates for the Vortex Stretching Term}
In the following nine subsections we shall estimate the term
\begin{equation*}
[{\rm Stretch}] := \Big|\int_\Omega \scal u(t,x):\omega(t,x)\otimes \omega(t,x)\,\dd x \Big|
\end{equation*}
using the representation formula for $u$ in Lemma \ref{lemma: representation formula}; recall that $\scal u =(\na u + \na^\top u)/2$. To this end, we first need the expressions for $\na J_i:\omega\otimes\omega$, $i=1,2,3$. The major novelty and difficulty of the current work comes from the $J_2$ term, due to the non-triviality of the boundary conditions.

Before further development, let us introduce a notation used throughout the paper:
\begin{equation*}
\widehat{a} := \frac{a}{|a|}\qquad \text{ for } a \in \R^3.
\end{equation*}
Also, in what follows let us write $\na_{y,j}=\na_j$ for $\p/\p y^j$, and $\na_{x,k}$ for $\p/\p x^k$. Furthermore, $\e^{klj}$ denotes the Levi-Civita tensor which equals to $1$ if $(klj)$ is an even permutation of $(123)$, to $-1$ if $(klj)$ is an odd permutation of $(123)$, and to $0$ if there are repeated indices in $\{k,l,j\}$

\subsection{Estimates for $J_2$: Preliminaries}

Let us first integrate by parts to re-write the $J_2$ term. It suffices to bound $J_2$ in each fixed $U_b$ for $b\in\B$. With a slight abuse of notations, let us denote
\begin{equation}\label{j2}
J_2^i(x) := \sum_j \int_\Omega \bigg\{ \chi_b(y) \GG_{ij}(x,y) (\na \times \omega)^j(y)\zeta\bigg(\frac{|T_bx-T_by|}{d_3}\bigg)\bigg\}\,\dd y,
\end{equation}
where
\begin{equation}
\GG_{ij}(T_bx, T_by) = \frac{\delta_{ij}}{4\pi} \bigg\{\frac{1}{|T_b x-T_b y|} - \frac{1}{|T_bx - (T_by)^\star|}\Big(1+\frac{2b_3^{(i)}}{3}\TI \big(T_b x, (T_by)^\star\big) \Big) \bigg\},
\end{equation}
and $d_4$ is chosen to be the minimum of $d_3/2$ and the maximal width of the tubular neighbourhood  of $\Sigma = \p\Omega$ such that the nearest point projection onto $\Sigma$ is a homotopy retract. Also, to simplify the notations, we fix $b \in \B$ and drop the subscripts $_b$ from now on.

Recall that
$$
(\na \times \omega) (y) = \sum_{k,l,j=1}^3 \e^{klj}\na_k\omega^l \frac{\p}{\p y^j},\qquad (\omega\times\n)(y) = \sum_{k,l,j=1}^3 \e^{klj} \omega^k\n^l\frac{\p}{\p y^j},
$$
where $\e^{klj}$ is the Levi-Civita symbol. Thus, integrate by parts and use the Stokes' theorem, we obtain
\begin{align}\label{j2: three terms}
J_2^i(x) &= -\sum_{kjl}\e^{klj}\int_\Omega \chi(y)\zeta\bigg(\frac{|Tx-Ty|}{d_4}\bigg) \cdot \na_k\Big(\GG_{ij}(Tx,Ty)\Big)\omega^l(y)\,\dd y\nonumber\\
&\qquad -\sum_{kjl}\e^{klj} \int_\Omega \na_k\bigg[\chi(y)\zeta\bigg(\frac{|Tx-Ty|}{d_4} \bigg) \bigg] \GG_{ij}(Tx,Ty) \omega^l(y)\,\dd y  \nonumber\\
&\qquad -\sum_j\int_{\Sigma=\p\Omega} \chi(y)\zeta\bigg(\frac{|Tx-Ty|}{d_4}\bigg) \GG_{ij}(Tx,Ty) (\omega\times\n)^j(y)\,\dd\htwo(y) \nonumber\\
&=: J_{21}^i(x) + J_{22}^i(x) + J_{23}^i(x).
\end{align}
Here $\na_k = \p/\p y^k$ and $\htwo$ is the 2-dimensional Hausdorff measure on $\Sigma$ obtained from the inclusion $\Sigma \emb \R^3$.

In the subsequent six subsections (Sects.\,4.4--4.9), we estimate the terms $J_{2j}$, $j=1,2,3$ one by one.

\subsection{Decomposition of $J_{21}$ into Three Terms} 
Let us introduce the symbol
\begin{equation}
\sigma_j := \begin{cases}
1 \,\,\,\,\qquad \text{ if } i=1 \text{ or } 2,\\
-1 \qquad \text{ if } i = 3,
\end{cases}
\end{equation}
and adopt the convention $\gamma,\eta \in \{1,2\}$; $i,j,k,l,p,q\ldots\in\{1,2,3\}$. Then, $J_{21}$ can be further decomposed into three terms:
\begin{lemma}\label{lemma J21}
$J_{21}$ can be written as follows:
\begin{align}\label{J21}
[J_{21}(x)]^i &= \sum_{klp\gamma}\frac{\e^{kli}}{4\pi}\bigg\{\int_\Omega \chi(y)  \zeta\bigg(\frac{|Tx-Ty|}{d_4}\bigg)\bigg[-2\frac{(Tx-Ty)^p}{|Tx-Ty|^3}\Big(\OO^k_p -\delta^k_3\na_\gamma\F(y-x_b) \OO^\gamma_p\Big)
\nonumber\\
&\qquad\qquad +2 \frac{(Tx-\tyst)^p}{|Tx-\tyst|^3} \sigma_p\Big(\OO^k_p -\delta^k_3\na_\gamma\F(y-x_b) \OO^\gamma_p\Big)\nonumber\\
&\qquad\qquad + \frac{2b^{(i)}_3}{3|Tx-\tyst|} \na_k\big[\TI\big(Tx,\tyst\big)\big] \bigg] \omega^l(y) \bigg\}\,\dd y\nonumber\\
&=: [J_{211}(x)]^i +[J_{212}(x)]^i+[J_{213}(x)]^i.
\end{align}
\end{lemma}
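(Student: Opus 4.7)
The plan is a direct chain-rule computation. Starting from the expression for $J_{21}^i$ in \eqref{j2: three terms}, I differentiate $\GG_{ij}(Tx,Ty)$ with respect to $y^k$ and organise the resulting terms into the three announced subterms. By the diagonality observed in Lemma \ref{lemma: solonnikov} and the explicit formula \eqref{green matrix for oblique}, we have $\GG_{ij}(Tx,Ty) = g_i(x,y)\delta_{ij}$ with
\[
g_i(x,y) = \frac{1}{4\pi}\bigg\{\frac{1}{|Tx-Ty|} - \frac{1}{|Tx-\tyst|}\Big(1 + \tfrac{2 b_3^{(i)}}{3}\Theta^{(i)}(Tx,\tyst)\Big)\bigg\};
\]
hence the inner sum $\sum_{j} \e^{klj}\GG_{ij}$ collapses to $\e^{kli}\, g_i$, which accounts for the prefactor $\e^{kli}/(4\pi)$ appearing in \eqref{J21}.

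Next, I apply the chain rule to $\na_{y,k}\, g_i(x,y)$. The two key ingredients are
\[
\na_{y,k}(Ty)^p = \OO^k_p - \delta^k_3\, \na_\gamma\F(y-x_b)\,\OO^\gamma_p,\qquad \na_{y,k}(\tyst)^p = \sigma_p\,\na_{y,k}(Ty)^p,
\]
the former being \eqref{DT} and the latter following from $(\tyst)^p = \sigma_p(Ty)^p$ (no sum in $p$). Combined with the elementary identity $\na(1/|z|) = -z/|z|^3$, these produce three natural contributions: $J_{211}$ arises from differentiating $1/|Tx-Ty|$, carrying the kernel $(Tx-Ty)^p/|Tx-Ty|^3$ paired with $\na_{y,k}(Ty)^p$; $J_{212}$ arises from differentiating the factor $1/|Tx-\tyst|$ while the bounded multiplier $1 + \tfrac{2 b_3^{(i)}}{3}\Theta^{(i)}$ is held fixed, contributing the kernel $(Tx-\tyst)^p/|Tx-\tyst|^3$ together with the reflection sign $\sigma_p$ coming from $\na_{y,k}(\tyst)^p$; and $J_{213}$ arises from differentiating $\Theta^{(i)}$ alone, with the prefactor $1/|Tx-\tyst|$ left intact.

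Collecting the three contributions with their signs yields the identity \eqref{J21}. This final step is pure algebraic bookkeeping --- tracking Jacobian signs, the reflection factor $\sigma_p$, and the partition of the scalar Green's function into its ``elliptic'' ($1/|\cdot|$) and ``correction'' ($\Theta^{(i)}$) parts --- and so there is no real analytic obstacle in the lemma itself. The particular grouping is chosen with an eye toward the subsequent potential estimates, where $1 + \tfrac{2b_3^{(i)}}{3}\Theta^{(i)}$ will be treated as a smooth bounded multiplier (using the forthcoming Lemma \ref{lemma: Theta kernel} on the smoothness of $\Theta^{(i)}$), while the two Biot--Savart-like kernels in $J_{211}$ and $J_{212}$ will be handled by singular integral estimates adapted to the half-space geometry.
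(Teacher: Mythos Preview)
Your proposal is correct and follows essentially the same route as the paper: a direct chain-rule computation of $\na_{y,k}\GG_{ij}(Tx,Ty)$ using the diagonal form \eqref{green matrix for oblique}, the Jacobian identity \eqref{DT}, and the reflection relation $(\tyst)^p=\sigma_p(Ty)^p$, after which the three terms $J_{211},J_{212},J_{213}$ are read off from the product rule applied to $1/|Tx-Ty|$, $1/|Tx-\tyst|$, and $\Theta^{(i)}$ respectively. The paper's own proof is terser but proceeds by exactly this computation.
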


Here and in the sequel, $\F:=\F_b$ as in Step 3 in Sect.\,4.1, and $x_b$ is the centre of the boundary chart $U_b$.

\begin{proof}
	It follows from a direct computation for $\na_k\GG_{ij}$. Note that 
	\begin{equation*}
	\na_k  \Big(\frac{1}{|Tx-Ty|}\Big) = \sum_{p=1}^3 \frac{-2(Tx-Ty)^p \na_p(Ty)^k}{|Tx-Ty|^3},
	\end{equation*}
where $\na_p (Ty)^k =\sum_q (\na T)_p^q\na_q y^k=(\na T)_p^k$. Thus,
\begin{equation}
\na_k \Big(\frac{1}{|Tx-Ty|}\Big) = -2\sum_{p=1}^3\sum_{\gamma=1}^2 \frac{(Tx-Ty)^p}{|Tx-Ty|^3} \Big\{\OO^k_p - \delta^k_3 \na_\gamma\F(y-x_b)\OO^\gamma_p \Big\}.
\end{equation}
Analogously, we have 
\begin{equation}
 \na_k \Big(\frac{1}{|Tx-\tyst|}\Big) = -2\sum_{p=1}^3\sum_{\gamma=1}^2 \frac{(Tx-\tyst)^p \sigma_p}{|Tx-\tyst|^3} \Big\{\OO^k_p - \delta^k_3 \na_\gamma\F(y-x_b)\OO^\gamma_p \Big\}.
\end{equation}
Hence, the assertion follows from the explicit formula for $\GG_{ij}$ in Eq.\,\eqref{green matrix for oblique}.  \end{proof}

In what follows we compute the vortex stretching terms involving $J_{21k}$, $k=1,2,3$ in order.

\subsection{Estimates for $J_{211}$} For this term, one has
\begin{align}
\na_j [J_{211}]^i(x)&= \sum_{klpq}\sum_{\gamma\eta} \frac{\e^{kli}}{2\pi} \Bigg\{ \int_\Omega \frac{2}{d_4} \chi(y) \zeta'\bigg(\frac{|Tx-Ty|}{d_4}\bigg) \frac{(Tx-Ty)^q}{|Tx-Ty|}\Big(\OO^j_q - \delta^j_3\na_\eta\F(x-x_b)\OO^\eta_q\Big)\times\nonumber\\
&\quad\times\frac{(Tx-Ty)^p}{|Tx-Ty|^3}\Big(\OO^k_p - \delta^k_3\na_\gamma\F(y-x_b)\OO^\gamma_p\Big) \omega^l(y) \,\dd y\nonumber\\
&\quad+ \int_\Omega \chi(y)\zeta\bigg(\frac{|Tx-Ty|}{d_4}\bigg)\na_{x,j}\bigg[\frac{(Tx-Ty)^p}{|Tx-Ty|^3}\Big(\OO^k_p - \delta^k_3\na_\gamma\F(y-x_b)\OO^\gamma_p\Big)\bigg] \omega^l(y)\,\dd y \Bigg\}\nonumber\\
&=: K_1(x) + K_2(x).
\end{align}
In the sequel let us simply the notations by setting
\begin{equation}\label{Xi}
\Xi(z)^i_j :=  \OO^i_j -\sum_{\gamma=1}^2 \delta^i_3\na_\gamma\F(z-x_c)\OO^\gamma_q\qquad \text{ for } z\in U_c,\,c\in\I\sqcup\B.
\end{equation}
Then, as $\Omega$ is a $C^2$ bounded domain,
\begin{equation}
\|\Xi\|_{C^0(U_c)} \leq 2+ \|\F\|_{\rm Lip(U_c)} =:C_1.
\end{equation}
As a result, since $\|\zeta'\|_{C^0(\R)}\leq 4$ and $T$ is almost an isometry (see Eq.\,\eqref{DT}), we can bound
\begin{equation}\label{k1}
|K_1(x)| \leq C_2\int_\Omega \frac{|\omega(y)|}{|x-y|^2}\,\dd y\qquad \text{ for } x \in U_b,
\end{equation}
where the constant $C_2=C(\|\F\|_{\rm Lip(U_b)},1/d_4)$. The same bound remains valid with the indices $i,j$ interchanged. For the $K_2$ term, one observes that
\begin{align}
\na_{x,j} \frac{(Tx-Ty)^p}{|Tx-Ty|^3} &= \frac{\na_j (Tx)^p}{|Tx-Ty|^3} - 6\sum_{q}\frac{(\txty)^p(\txty)^q\na_j(Tx)^q}{|Tx-Ty|^5}\nonumber\\
&=\frac{\Xi_p^j(x)}{|\txty|^3}-6\sum_q \frac{(\txty)^p(\txty)^q\,\Xi_q^j(x)}{|Tx-Ty|^5}.
\end{align}

Hence, the symmetric gradient of $J_{211}$ equals to 
\begin{align}\label{symmetric grad of J211}
&\frac{1}{2} \big(\na_j[J_{211}]^i + \na_i[J_{211}]^j\big)(x)\nonumber\\
=\,&\sum_{klpq} \frac{\e^{kli}}{4\pi} \int_\Omega\chi(y)\zeta\bigg(\frac{|\txty|}{d_4}\bigg)\omega^l(y)\Xi^k_p(y)\bigg[\frac{\Xi_p^j(x)}{|\txty|^3}-6 \frac{(\txty)^p(\txty)^q\,\Xi_q^j(x)}{|Tx-Ty|^5}\bigg]\,\dd y\nonumber\\
&\,+ \sum_{klpq} \frac{\e^{klj}}{4\pi} \int_\Omega\chi(y)\zeta\bigg(\frac{|\txty|}{d_4}\bigg)\omega^l(y)\Xi^k_p(y)\bigg[\frac{\Xi_p^i(x)}{|\txty|^3}-6 \frac{(\txty)^p(\txty)^q\,\Xi_q^i(x)}{|Tx-Ty|^5}\bigg]\,\dd y\nonumber\\
&\, + K_3(x),
\end{align}
where $K_3$ has the same bound \eqref{k1} as for $K_1$. The first terms in the second and third lines above have nice cancellation properties, thanks to the following observation:
\begin{lemma}\label{lemma: algebraic}
For some $C_3=C(\|\na^2\F\|_{C^0(U_b)})$, there holds
\begin{equation}
\sum_{ijkp}\e^{kli}\big(\Xi^k_p(y) \Xi^j_p(x)+\Xi^k_p(y) \Xi^i_p(x)\big) \leq C_3|x-y|
\end{equation}
for $x,y$ sufficiently close in $U_b$.
\end{lemma}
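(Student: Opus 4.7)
The natural approach is to exploit the decomposition $\Xi^i_j(z) = \OO^i_j + R^i_j(z)$, where $R^i_j(z) := -\delta^i_3 \sum_{\gamma=1}^{2} g_\gamma(z)\OO^\gamma_j$ with $g_\gamma(z) := \na_\gamma \F(z - x_b)$. The key structural features are that $R$ is supported only in its third row and is a smooth function of position, while $\OO$ is a constant orthogonal matrix. Expanding the products yields
\[
\sum_p \Xi^k_p(y)\Xi^j_p(x) = \delta^{kj} + \sum_p \OO^k_p R^j_p(x) + \sum_p R^k_p(y) \OO^j_p + \sum_p R^k_p(y) R^j_p(x),
\]
where orthogonality of $\OO$ has already been used on the leading term. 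A parallel expansion holds for $\sum_p \Xi^k_p(y) \Xi^i_p(x)$, and every cross-term or pure-$R$ term simplifies via $\sum_p \OO^\gamma_p \OO^\mu_p = \delta^{\gamma \mu}$ into a product of a Kronecker factor (carrying $\delta^i_3$ or $\delta^j_3$) with values of $g_\gamma$ at $x$ or $y$.

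The leading cancellation is then immediate: the zeroth-order (in $g$) contribution to the bracketed quantity of the lemma is
\[
\sum_k \e^{kli} \delta^{kj} + \sum_k \e^{klj} \delta^{ki} = \e^{jli} + \e^{ilj} = 0
\]
by the antisymmetry of $\e$ under the $(i, j)$ exchange. The remaining contributions are linear or quadratic in $g_\gamma(x)$ and $g_\gamma(y)$. I would group them into $(x \leftrightarrow y)$ pairs enforced by the symmetrization between the two summands $\e^{kli}\Xi^j_p(x)$ and $\e^{klj}\Xi^i_p(x)$; within each such pair, the evaluation points differ but the Kronecker/Levi--Civita prefactors match once the third-row support of $R$ and the resulting $\delta^i_3$, $\delta^j_3$ constraints are taken into account.

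To upgrade the pointwise estimate to an $O(|x-y|)$ bound, I substitute $g_\gamma(y) = g_\gamma(x) + \bigl(g_\gamma(y) - g_\gamma(x)\bigr)$. The first piece produces a ``frozen'' contribution (all evaluations at $x$) which vanishes exactly by the same Levi--Civita antisymmetry that eliminated the leading term; the second piece is controlled by Taylor's theorem: $|g_\gamma(y) - g_\gamma(x)| \leq \|\na^2 \F\|_{C^0(U_b)} \, |x - y|$. The quadratic-in-$g$ residuals are handled analogously after splitting $g_\gamma(y) g_{\gamma'}(x) = g_\gamma(x) g_{\gamma'}(x) + g_{\gamma'}(x)\bigl(g_\gamma(y) - g_\gamma(x)\bigr)$. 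The first summand again cancels by the frozen identity; the second is $O(|x-y|)$ with implicit constant depending on $\|\na'\F\|_{C^0(U_b)}$ and $\|\na^2 \F\|_{C^0(U_b)}$, both absorbed into $C_3$.

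The main obstacle is the algebraic bookkeeping in the middle step: verifying that the ``frozen'' residual, after combining the four families of $R$-contributions (linear-$x$, linear-$y$, bilinear, and their $(i \leftrightarrow j)$ images), indeed cancels identically. This amounts to a finite collection of index identities relying on the constraint that $R^k_p$ forces $k = 3$, which in turn restricts which $\e^{k l i}$-entries can appear. Once the frozen cancellation is confirmed, the quantitative estimate in terms of $\|\na^2 \F\|_{C^0(U_b)}$ follows at once from the Taylor remainder bound, and $x,y$ being sufficiently close is needed only so that the Taylor expansion of $g_\gamma$ is valid in a single chart $U_b$.
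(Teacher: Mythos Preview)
Your proposal follows essentially the same route as the paper: expand $\sum_p\Xi^k_p(y)\Xi^j_p(x)$ using $\OO\OO^\top=I$, observe that the resulting expression is symmetric in $(k,j)$ up to a remainder $\delta^k_3\big(\na_j\F(x-x_b)-\na_j\F(y-x_b)\big)$, argue that the symmetric part is annihilated by the Levi--Civita contraction, and control the remainder by the mean-value estimate $|g_\gamma(x)-g_\gamma(y)|\le\|\na^2\F\|_{C^0}|x-y|$. Your ``freeze at $x$ then Taylor'' packaging is exactly the paper's add-and-subtract of $\delta^k_3\na_j\F(x-x_b)$, so the arguments coincide.
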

\begin{proof}
Using $\OO^{-1}=\OO^{\top}$ and the definition of $\Xi$ in \eqref{Xi}, we have
\begin{align}\label{xixi}
\Xi^k_p(y) \Xi^j_p(x) &= \delta^k_i +\sum_{\gamma,\eta=1}^2\delta^k_3 \delta^i_3\delta^\gamma_\eta \na_\gamma\F(y-x_b)\na_\eta\F(x-x_b) \nonumber\\
&\quad - \Big(\delta^i_3\na_k\F(x-x_b) +\delta^k_3 \na_i\F(y-x_b)\Big)\nonumber\\
&= \delta^k_i +\sum_{\gamma,\eta=1}^2\delta^k_3 \delta^i_3\delta^\gamma_\eta \na_\gamma\F(y-x_b)\na_\eta\F(x-x_b)\nonumber\\
&\quad - \Big(\delta^i_3\na_k\F(x-x_b) +\delta^k_3 \na_i\F(x-x_b)\Big) + \delta^k_3 \big(\na_i\F(x-x_b)-\na_i\F(y-x_b)\big).
\end{align}
The first three terms on the right-hand side are symmetric in $i$ and $k$; hence, multiplying with $\e^{kli}$ and symmetrising over $i$, $j$ yield zero. For the last term, one may use the definition of $T$ and Taylor expansion to deduce 
\begin{equation}
\big|\delta^k_3 \big(\na_i\F(x-x_b)-\na_i\F(y-x_b)\big)\big| \leq C_3 |\txty| = C_4|x-y|\qquad \text{ for } x,y\in U_b.
\end{equation}
Hence the assertion follows.  \end{proof}

The above lemma implies that 
\begin{align}\label{f}
&\bigg|\sum_{klpq} \frac{\e^{kli}}{4\pi} \int_\Omega\chi(y)\zeta\bigg(\frac{|\txty|}{d_4}\bigg)\omega^l(y)\Xi^k_p(y)\frac{\Xi_p^j(x)}{|\txty|^3} \nonumber\\
& \qquad\qquad\qquad\qquad+ \sum_{klpq} \frac{\e^{klj}}{4\pi} \int_\Omega\chi(y)\zeta\bigg(\frac{|\txty|}{d_4}\bigg)\omega^l(y)\Xi^k_p(y)\frac{\Xi_p^i(x)}{|\txty|^3}\bigg|\nonumber\\
&\qquad\qquad\qquad\qquad\leq  C_2\int_\Omega \frac{|\omega(y)|}{|x-y|^2}\,\dd y,
\end{align}
which is the same bound as for $K_1$, $K_3$. For the remaining terms (denoted by $\mathscr{R}$) in Eq.\,\eqref{symmetric grad of J211}, let us introduce the short-hand notation
\begin{eqnarray}
&&\Psi^\sharp(x,y):=\Xi(y)\cdot(\txty),\\
&&\Psi^\flat(x,y):=\Xi(x)\cdot(\txty).
\end{eqnarray}
Thus,
\begin{align}\label{g}
\mathscr{R}&\equiv\sum_{klpq}\frac{\e^{kli}}{4\pi}\int_\Omega\chi(y)\zzz\omega^l(y)\Xi^k_p(y)\bigg[-6\frac{(\txty)^p(\txty)^q\Xi^j_q(x)}{|\txty|^5}\bigg]\,\dd y\nonumber\\
&\quad + \sum_{klpq}\frac{\e^{klj}}{4\pi}\int_\Omega\chi(y)\zzz\omega^l(y)\Xi^k_p(y)\bigg[-6\frac{(\txty)^p(\txty)^q\Xi^i_q(x)}{|\txty|^5}\bigg]\,\dd y\nonumber\\
&=-\frac{3}{2\pi} \sum_{kl} \int_\Omega \chi(y)\zzz \bigg\{\frac{\e^{kli}(\Psi^\sharp)^k(\Psi^\flat)^j + \e^{klj}(\Psi^\sharp)^k(\Psi^\flat)^i}{|\txty|^5} \bigg\}\omega^l(y)\,\dd y\nonumber\\
&= -\frac{3}{2\pi} \int_\Omega \chi(y)\zzz \bigg\{\frac{\sh\times\omega(y) \otimes\fl + \fl\otimes\sh\times\omega(y)}{|\txty|^5} \bigg\}^{ij}\,\dd y.
\end{align}
Here and throughout, the notation for tensor product is understood as follows:
\begin{equation*}
\{a \times b \otimes c\}^{ij} := (a\times b)^i c^j \qquad \text{ for } a,b,c\in\R^3, \, i,j\in\{1,2,3\}.
\end{equation*}
We further notice that  
\begin{equation}\label{geometric observation}
[a\times b \otimes c + b\otimes c \times a]: (d\otimes d) = 2\langle c,d\rangle \det (a, b, d)\qquad \text{ for } a,b,c,d \in \R^3,
\end{equation}
where $\det (a,b,d)$ is the determinant of the $3\times 3$ matrix with columns $a,b$ and $d$ in order. Hence, in view of Eqs.\,\eqref{symmetric grad of J211}\eqref{f}\eqref{g} and \eqref{geometric observation} and Lemma \ref{lemma: algebraic}, one obtains
\begin{align}\label{k5}
&\Big|\int_\Omega\frac{\na J_{211}(x) + \na^\top J_{211}(x)}{2} : \omega(x)\otimes\omega(x)\,\dd x\Big|\nonumber\\
\leq\,& \underbrace{\frac{3}{\pi} \int_\Omega\int_\Omega\chi(y)\zzz \frac{\big|\langle\fl(x,y), \omega(x)\rangle\big|\cdot\big|\det\big(\sh(x,y), \omega(y), \omega(x)\big)\big|}{|\txty|^5}\,\dd y\,\dd x}_{\equiv \, K_5} + K_4,
\end{align}
where, for some $C_5=C(\|\F\|_{C^2(\overline{\Omega})}, 1/d_4)$, there holds
\begin{equation}\label{k4}
|K_4| \leq C_5\int_\Omega|\omega(x)|^2\int_{U_b}  \frac{|\omega(y)|}{|x-y|^2}\,\dd y\,\dd x.
\end{equation}

It remains to bound $K_5$. The key is to explore the geometric meaning of the determinant, as in Constantin--Fefferman \cite{cf} and Constantin \cite{constantin}. This is achieved by the following lemmas. Let us adopt the notation
\begin{equation}
\mathcal{R}_\F (z;w):= \begin{bmatrix}
\na_1\F(z) w^3\\
\na_2\F(z) w^3\\
\na_1\F(z)w^1 + \na_2\F(z)w^2 + |\na\F(z)|^2 w^3
\end{bmatrix}.
\end{equation}
Then we have
\begin{lemma}\label{lemma: K5, 1}
The determinant term in \eqref{k5} satisfies
\begin{align}\label{i}
&\frac{\Big|\det\Big(\sh(x,y),\omega(y),\omega(x)\Big)\Big|}{|\txty|}\nonumber\\
&\qquad\qquad\simeq\bigg\{\Big|\det\,\Big(\xyhat, \omega(y), \omega(x)\Big)\Big| + \Big|\det\,\Big(\frac{\mathcal{R}_\F(y;x-y)}{|x-y|}, \omega(x),\omega(y)\Big)\Big| \bigg\}.
\end{align}
\end{lemma}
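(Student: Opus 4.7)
The plan is to prove the equivalence by an explicit closed-form computation of $\sh(x,y) = \Xi(y)(Tx-Ty)$ in the local straightened coordinates of the boundary chart $U_b$, followed by a multilinear expansion of $\det(\sh, \omega(y), \omega(x))$ and term-by-term matching against the right-hand side.

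First, I would work in the straightened coordinates, where (modulo the orthogonal rotation $\OO_b$, which preserves absolute values of determinants) the straightening map takes the explicit form $T(z) = (z^1, z^2, z^3 - \F(z^{1,2}))$. Unfolding the definition of $\Xi$ from Eq.\,\eqref{Xi} and substituting into $\sh = \Xi(y)(Tx-Ty)$ yields the closed-form identity
\[
\sh(x,y) = v - (v^3 - \Delta\F)\,(\na\F(y), 0) - \Delta\F\,e_3,
\]
where $v := x-y$, $\Delta\F := \F(x^{1,2}) - \F(y^{1,2})$, $(\na\F, 0) := (\na_1\F(y), \na_2\F(y), 0)^\top$, and $e_3$ is the outward unit normal to the straightened boundary $\{z^3 = 0\}$. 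Taylor's theorem supplies $\Delta\F = \na\F(y)\cdot v^{1,2} + O(\|\na^2\F\|_\infty |v|^2)$, while $|Tx-Ty| \simeq |v|$ follows from the bi-Lipschitz bound $|\det\na T|\equiv 1$ encoded in Eq.\,\eqref{DT matrix}.

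Next, by multilinearity of the determinant, both sides of the claimed equivalence reduce to the same three basic pairings
\[
D_v := \det(v, \omega(y), \omega(x)),\quad D_{\na\F} := \det((\na\F, 0), \omega(y), \omega(x)),\quad D_{e_3} := \det(e_3, \omega(y), \omega(x)).
\]
The LHS becomes $|D_v - (v^3-\Delta\F)D_{\na\F} - \Delta\F\, D_{e_3}|/|Tx-Ty|$. Unpacking $\mathcal{R}_\F(y;v) = v^3(\na\F, 0) + (\na\F\cdot v^{1,2} + |\na\F|^2 v^3)\,e_3$ and using the antisymmetry of $\det$ to switch the order $(\omega(x), \omega(y))$, the second term of the RHS equals $|v^3 D_{\na\F} + (\na\F\cdot v^{1,2} + |\na\F|^2 v^3) D_{e_3}|/|v|$, while the first is precisely $|D_v|/|v|$. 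Substituting $\Delta\F = \na\F\cdot v^{1,2} + O(|v|^2)$ and using $\|\na\F\|_\infty \leq C$, the forward bound LHS $\lesssim$ RHS follows immediately from the triangle inequality with constants depending only on $\|\F\|_{C^1}$, up to an $O(|v|)|\omega(y)||\omega(x)|$ Taylor remainder that is absorbable in the subsequent singular-integral estimate. The reverse bound RHS $\lesssim$ LHS is obtained by solving a $2\times 2$ linear system in $(D_{\na\F}, D_{e_3})$ in terms of the two determinants appearing on the two sides, after extracting the common $D_v$ contribution.

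The main obstacle is the reverse direction, which requires uniform invertibility of the $2\times 2$ coefficient matrix relating $(D_{\na\F}, D_{e_3})$ to the two available linear combinations. A direct computation of its determinant yields an expression of leading order $|\na\F|^2 (v^3)^2 + O(|v|^2)$, which is nonzero away from a degenerate configuration; on the degenerate set both sides of the equivalence vanish to the same order, so the bound extends by continuity with uniform constant depending on $\|\F\|_{C^2(\overline{\Omega})}$. This algebraic bookkeeping, combined with the bi-Lipschitz and Taylor bounds, closes the argument.
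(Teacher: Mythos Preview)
Your forward bound (LHS $\lesssim$ RHS) follows the paper's route: compute $\sh$ explicitly, decompose the determinant by multilinearity, and apply the triangle inequality together with $|Tx-Ty|\simeq|x-y|$. The paper's version is shorter---it directly obtains $\sh(x,y) = (x-y) + \mathcal{R}_\F(y;x-y) + \smallo(|x-y|)$ (Eq.~\eqref{psi sharp}) without passing through the intermediate basis $\{(\nabla\F,0), e_3\}$---but the substance is the same. (Your closed form for $\sh$ does not agree with Eq.~\eqref{psi sharp}: you appear to have applied $\Xi(y)$ to the exact vector $Tx-Ty$ without also Taylor-expanding $Tx-Ty$ about $y$, which brings in a second factor of $\Xi$; compare the paper's computation in Eq.~\eqref{sharp}. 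This slip is harmless for the shape of the upper bound.)

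The genuine gap is your reverse direction. By your own reckoning the $2\times 2$ coefficient determinant is $|\nabla\F|^2(v^3)^2 + O(|v|^2)$, but the leading term here is itself $O(|v|^2)$, so there is no uniform invertibility and the continuity argument on the ``degenerate set'' has no teeth. More fundamentally, the reverse inequality cannot hold in general: writing $A=\det(\widehat{x-y},\omega(y),\omega(x))$ and $B=\det(\mathcal{R}_\F/|x-y|,\omega(y),\omega(x))$, the claim reduces to $|A|+|B|\lesssim|A+B|$, which fails whenever $A$ and $B$ cancel. Concretely, if $\omega(x),\omega(y)$ span a plane containing $(x-y)+\mathcal{R}_\F$ but not $x-y$ (generically possible once $\mathcal{R}_\F$ is not parallel to $x-y$), then the left side vanishes while the right side does not. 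The paper's own proof does not establish the reverse direction either---it stops at the decomposition of $\sh$ and says ``hence the assertion follows''---and only the upper bound is ever used downstream (see Eq.~\eqref{k5'} and the proof of Lemma~\ref{lemma: K5, 3}). Read the $\simeq$ in the statement as $\lesssim$ and drop the $2\times2$ inversion argument.
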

Here, recall the notation $\xyhat:=(x-y)/|x-y|$; also, we write $A \simeq B$ to mean that $C^{-1}A \leq B \leq CA$ for a universal constant $C$.

\begin{proof}
	We make a detailed analysis of the term $\sh$. By Taylor expansion and $\OO^{-1}=\OO^\top$ one may deduce
\begin{align}\label{sharp}
[\sh (x,y)]^i&=\sum_{j} \Xi^i_j(y) (\txty)^j\nonumber\\
&=\sum_{jk\eta\gamma} \Big(\OO^i_j + \delta^i_3\na_\gamma\F(y)\OO^\gamma_j\Big)\Big(\OO^k_j + \delta^k_3\na_\eta\F(y)\OO^\eta_j\Big)(x-y)^k+ \smallo(|x-y|)\nonumber\\
&=(x^i-y^i) + \delta^i_3\Big\{\na_1\F(y)(x^1-y^1) + \na_2\F(y)(x^2-y^2) + |\na \F(y)|^2 (x^3-y^3)\Big\}\nonumber\\
&\qquad\qquad + \na_i\F(y) (x^3-y^3)+ \smallo(|x-y|);
\end{align}
Equivalently,
\begin{align}\label{psi sharp}
\sh(x,y)&= (x-y) + \begin{bmatrix}
\na_1\F(y) (x^3-y^3)\\
\na_2\F(y) (x^3-y^3)\\
\na_1\F(y)(x^1-y^1) + \na_2\F(y)(x^2-y^2) + |\na \F(y)|^2 (x^3-y^3)
\end{bmatrix}\nonumber\\
&=:(x-y) + \mathcal{R}_\F(y;x-y).
\end{align}
On the other hand, by shrinking $d_4>0$ if necessary, we conclude from Eq.\,\eqref{DT matrix} that
\begin{equation}\label{TTT}
\frac{1}{2}|x-y| \leq |\txty| \leq 2|x-y|.
\end{equation}
Hence the assertion follows.  \end{proof}

By analogous arguments, we have

\begin{lemma}\label{lemma K5,2}
\begin{align}\label{j}
\frac{|\fl(x,y)|}{|\txty|} \simeq 1+\frac{|\mathcal{R}_\F(x;x-y)|}{|x-y|}.
\end{align}
\end{lemma}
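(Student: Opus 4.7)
The plan is to prove Lemma~\ref{lemma K5,2} by showing that both sides of the claimed equivalence are pinched between two positive constants, at which point $\simeq$ follows automatically by comparing bounded quantities. The argument should be substantially shorter than that of Lemma~\ref{lemma: K5, 1}, because $|\fl|$ is an unsigned length rather than a determinant, so no subtle cancellation or geometric structure is needed to prevent degeneracy.

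First, I would observe from Eq.\,\eqref{DT matrix} that $\Xi(x) = \na T(x)$ is the product of the orthogonal matrix $\OO_b$ with a lower-triangular matrix of unit diagonal. Consequently $|\det \Xi(x)| = 1$ and the operator norms of both $\Xi(x)$ and $\Xi(x)^{-1}$ are bounded by a constant depending only on $\|\F\|_{C^1(\overline{U}_b)}$. Applying this uniform bi-Lipschitz bound to the vector $Tx - Ty$ yields $|\fl(x,y)| = |\Xi(x)(Tx-Ty)| \simeq |Tx-Ty|$, and combining with the near-isometry estimate $|Tx-Ty| \simeq |x-y|$ from Eq.\,\eqref{TTT} gives $|\fl(x,y)|/|\txty| \simeq 1$ with constants depending only on $\|\F\|_{C^1}$.

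Second, since $\mathcal{R}_\F(x; x-y)$ is linear in $x-y$ with coefficients built from $\na\F(x)$ and $|\na\F(x)|^2$, one has $|\mathcal{R}_\F(x; x-y)| \leq C_0 \|\F\|_{C^1}|x-y|$, so $1 + |\mathcal{R}_\F(x; x-y)|/|x-y|$ lies in $[1,\, 1+C_0\|\F\|_{C^1}]$. Both sides of the claimed equivalence therefore lie in a common bounded interval bounded away from zero, and $\simeq$ follows. For a presentation strictly parallel to Lemma~\ref{lemma: K5, 1}, one may additionally derive the Taylor expansion $\fl(x,y) = (x-y) + \mathcal{R}_\F(x;x-y) + \smallo(|x-y|)$ by literally repeating the computation in Eqs.\,\eqref{sharp}--\eqref{psi sharp} with $\Xi(y)$ replaced by $\Xi(x)$; this delivers the upper bound $|\fl|/|\txty| \leq 1 + |\mathcal{R}_\F(x;x-y)|/|x-y| + \smallo(1)$ more explicitly. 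I anticipate no serious obstacle here beyond routine bookkeeping of the Taylor remainder and checking that the constants in the bi-Lipschitz bound on $\Xi(x)$ are uniform in $x \in U_b$.
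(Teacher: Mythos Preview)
Your proposal is correct and essentially matches the paper's approach. The paper's proof simply records the expansion $\fl(x,y) = (x-y) + \mathcal{R}_\F(x;x-y)$ (the computation you mention at the end, parallel to Eqs.\,\eqref{sharp}--\eqref{psi sharp}) and then invokes Eq.\,\eqref{TTT}; your primary argument via the uniform bi-Lipschitz bound on $\Xi(x)$ is a slightly more direct way to see that both sides are $\simeq 1$, which is all that is really being asserted.
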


\begin{proof}
A computation similar to \eqref{sharp} gives us
\begin{align}\label{psi flat}
\fl(x,y)&= (x-y) + \begin{bmatrix}
\na_1\F(x) (x^3-y^3)\\
\na_2\F(x) (x^3-y^3)\\
\na_1\F(x)(x^1-y^1) + \na_2\F(x)(x^2-y^2) + |\na \F(x)|^2 (x^3-y^3)
\end{bmatrix}\nonumber\\
&=:(x-y) + \mathcal{R}_\F(x;x-y).
\end{align}
The assertion follows immediately from Eq.\,\eqref{TTT}. \end{proof}

Now, utilising the crucial geometric observation by Constantin \cite{constantin} and Constantin--Fefferman \cite{cf}, we can finalise the estimate for $K_5$. This is the first place where we need the geometric condition in the hypotheses of Theorem \ref{thm: main}.

\begin{lemma}\label{lemma: K5, 3}
Under the assumption of Theorem \ref{thm: main}, {\it i.e.}, the turning angle of vorticity
\begin{equation*}
\theta(x,y) := \angle \Big(\what(x), \what(y)\Big)
\end{equation*}
satisfies
\begin{equation*}
|\sin\theta(x,y)| \leq C_6 \sqrt{|x-y|}
\end{equation*}
for a universal constant $C_6>0$, we can find $C_7=C(C_6, \|\F\|_{C^1(\overline{\Omega})})$ such that
\begin{equation}\label{K5 final bound}
|K_5| \leq C_7 \int_\Omega |\omega(x)|^2 \int_{U_b} \frac{|\omega(y)|}{|x-y|^{5/2}}\,\dd y \,\dd x.
\end{equation}
\end{lemma}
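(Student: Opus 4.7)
The plan hinges on the elementary multilinear identity $\det(v, a, b) = v \cdot (a \times b)$, which immediately yields the Constantin--Fefferman-type estimate
\begin{equation*}
|\det(v, \omega(x), \omega(y))| \leq |v|\,|\omega(x)|\,|\omega(y)|\,|\sin \theta(x,y)|
\end{equation*}
for every $v \in \R^3$. The geometric hypothesis $|\sin\theta(x,y)| \leq C_6\sqrt{|x-y|}$ from Theorem \ref{thm: main} is precisely what converts this pointwise inequality into a $\sqrt{|x-y|}$ gain that tempers the strong singularity $|\txty|^{-5}$ in the kernel of $K_5$.

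With this in mind, I would first invoke Lemma \ref{lemma: K5, 1} to decompose $|\det(\sh(x,y), \omega(y), \omega(x))|/|\txty|$ into two summands. In the first, $v = \xyhat$ has unit length; in the second, $v = \mathcal{R}_\F(y; x-y)/|x-y|$ satisfies $|v| \leq C\|\F\|_{C^1(\overline{\Omega})}$, since the explicit form of $\mathcal{R}_\F$ yields $|\mathcal{R}_\F(y; x-y)| \leq C\|\F\|_{C^1}|x-y|$. The multilinear identity together with the turning-angle hypothesis then gives, after multiplying back by $|\txty|$,
\begin{equation*}
|\det(\sh(x,y), \omega(y), \omega(x))| \leq C\,|\omega(x)|\,|\omega(y)|\,\sqrt{|x-y|}\cdot|\txty|.
\end{equation*}
An analogous step based on Lemma \ref{lemma K5,2} and $|\mathcal{R}_\F(x;x-y)| \leq C\|\F\|_{C^1}|x-y|$ produces $|\fl(x,y)| \leq C'|\txty|$, whence Cauchy--Schwarz yields $|\langle\fl(x,y), \omega(x)\rangle| \leq C'|\txty|\,|\omega(x)|$.

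Substituting both bounds into the definition of $K_5$ and using $0 \leq \chi, \zeta \leq 1$, the integrand is dominated by $C''|\omega(x)|^2 |\omega(y)| \sqrt{|x-y|}/|\txty|^3$. The two-sided comparison $|\txty| \geq |x-y|/2$ from Eq.\,\eqref{TTT} converts this into $C'''|\omega(x)|^2 |\omega(y)|/|x-y|^{5/2}$, and Fubini then yields the advertised bound \eqref{K5 final bound} with an explicit constant $C_7 = C(C_6, \|\F\|_{C^1(\overline{\Omega})})$. The only conceptual obstacle is verifying that the original Constantin--Fefferman geometric framework survives the boundary-straightening map $T$; Lemmas \ref{lemma: K5, 1} and \ref{lemma K5,2} have already done that bookkeeping by reducing $\sh$ and $\fl$ to controlled linear perturbations of $x-y$, so once they are available the remainder of the proof is a direct application of multilinear algebra and the turning-angle hypothesis.
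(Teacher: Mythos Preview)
Your proposal is correct and follows essentially the same route as the paper: invoke Lemmas \ref{lemma: K5, 1} and \ref{lemma K5,2}, bound the determinant by $|\sin\theta(x,y)|$, apply the turning-angle hypothesis, and convert $|\txty|^{-3}$ to $|x-y|^{-3}$ via Eq.\,\eqref{TTT}. The only cosmetic difference is that the paper justifies the determinant bound through the parallelepiped-volume/orthogonal-projection picture $\det(\widehat a,\what(x),\what(y))=\det(\widehat a,\what(x),{\rm pr}_{[\what(x)]^\perp}\what(y))$, whereas you use the scalar triple product $\det(v,a,b)=v\cdot(a\times b)$ together with $|a\times b|=|a||b||\sin\theta|$; these are of course two phrasings of the same identity.
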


\begin{proof}
In view of Lemmas \ref{lemma: K5, 1} and \ref{lemma K5,2}, substituting Eqs.\,\eqref{i}\eqref{j} into \eqref{k5}, we have:
\begin{align}\label{k5'}
|K_5| &\simeq \int_\Omega |\omega(x)|^2 \int_\Omega \chi(y)\zzz \frac{|\omega(y)|}{|x-y|^3} \Big(1+\frac{|\mathcal{R}_\F(x;x-y)|}{|x-y|}\Big) \times \nonumber\\
&\qquad\times\bigg\{\Big|\det\Big(\xyhat, \what(y),\what(x)\Big)\Big| + \Big|\det\Big(\frac{\mathcal{R}_\F(y;x-y)}{|x-y|}, \what(x),\what(y)\Big)\Big| \bigg\} \,\dd y\,\dd x.
\end{align}

Now we invoke the geometric observation by Constantin \cite{constantin} and Constantin--Fefferman \cite{cf} (also see Beir\~{a}o da Veiga--Berselli \cite{bb2} and the references cited therein): Consider the expression
$$
\det \big(\widehat{a},\what(x),\what(y)\big)
$$
for any unit vector $\widehat{a}\in\R^3$. It is the volume of the parallelepiped spanned by the sides $\widehat{a}$, $\what(x)$ and $\what(y)$, hence equals to 
$$
\det\Big(\widehat{a}, \what(x), {\rm pr}_{[\what(x)]^\perp}\what(y)\Big).
$$
Here ${\rm pr}_{[\what(x)]^\perp}(\cdot)$ denotes the orthogonal projection onto the subspace perpendicular to $\what(x)$. Moreover, as $|\what(y)|=1$, one has 
\begin{align}\label{l}
\Big|\det\Big(\widehat{a}, \what(x), {\rm pr}_{[\what(x)]^\perp}\what(y)\Big)\Big|&\leq\Big|{\rm pr}_{[\what(x)]^\perp}\what(y)\Big|\nonumber\\
&\leq |\sin \theta(x,y)|.
\end{align}
Finally, it is clear that
\begin{equation}
\frac{|\mathcal{R}_\F(\bullet; x-y)|}{|x-y|} \leq \sqrt{3} \|\na \F\|_{C^0(\overline{\Omega})}.
\end{equation}
Therefore, we complete the proof in view of \eqref{k5'} and by considering $\widehat{a}=\xyhat$ in \eqref{l}.  \end{proof}

We conclude this subsection with the following bound for the contribution of $J_{211}$ to the vortex stretching term:

\begin{proposition}\label{proposition: J211 vortex stretching}
Under the assumption of Theorem \ref{thm: main}, 
\begin{align}
&\Big|\int_\Omega\frac{\na J_{211}(x) + \na^\top J_{211}(x)}{2} : \omega(x)\otimes\omega(x)\,\dd x\Big|\nonumber\\
&\qquad \qquad \qquad \leq  C_8\bigg\{\int_\Omega|\omega(x)|^2\int_{U_b}  \frac{|\omega(y)|}{|x-y|^2}\,\dd y\,\dd x +\int_\Omega |\omega(x)|^2 \int_{U_b} \frac{|\omega(y)|}{|x-y|^{5/2}}\,\dd y \,\dd x\bigg\}
\end{align}
where $C_8=C(\|\F\|_{C^2(\overline{\Omega})}, 1/d_4)$.
\end{proposition}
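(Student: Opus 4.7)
The plan is to assemble the bounds already set up in the previous subsections. Starting from the symmetric-gradient representation \eqref{symmetric grad of J211} of $\na J_{211}$, I would contract against $\omega(x)\otimes\omega(x)$ and integrate in $x$, splitting the integrand into two groups: a ``soft'' part whose $y$-kernel is no worse than $|x-y|^{-2}$, and the genuinely anisotropic $K_5$-piece where the vortex-alignment hypothesis enters.

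For the soft part, the piece $K_1$ in \eqref{k1} and its index-swapped twin, together with the catch-all term $K_3$ appearing in \eqref{symmetric grad of J211}, are pointwise dominated by $C\int_{U_b}|\omega(y)|/|x-y|^2\,\dd y$. The potentially more dangerous $|\txty|^{-3}$ contributions carried by $\Xi^k_p(y)\Xi^j_p(x)$ and $\Xi^k_p(y)\Xi^i_p(x)$ are tamed by Lemma \ref{lemma: algebraic}: after the $\e^{kli}$- and $\e^{klj}$-antisymmetrisation one recovers an extra factor of $|x-y|\simeq|\txty|$ (using the almost-isometry \eqref{TTT}), which cancels one singular power and reduces the kernel to $|x-y|^{-2}$ as displayed in \eqref{f}. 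Together these go into $K_4$, which satisfies the bound \eqref{k4}. Multiplying by $|\omega(x)|^2$ and integrating over $\Omega$ yields the first summand of the asserted bound, with a constant depending only on $\|\F\|_{C^2(\overline{\Omega})}$ and $1/d_4$.

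For the anisotropic part, the piece $K_5$ has already been singled out in \eqref{k5} via the algebraic identity \eqref{geometric observation}. Lemma \ref{lemma K5,2} controls $|\fl(x,y)|/|\txty|$ by a $\|\na\F\|_{C^0}$-bounded factor, Lemma \ref{lemma: K5, 1} rewrites $\det(\sh,\omega(y),\omega(x))/|\txty|$ in terms of determinants against $\xyhat$ and the boundary-straightening remainder $\mathcal{R}_\F(y;x-y)/|x-y|$, and Lemma \ref{lemma: K5, 3} then exploits the Constantin--Fefferman geometric observation: for any unit vector $\widehat{a}$ the triple determinant $|\det(\widehat{a},\what(x),\what(y))|$ is bounded by $|\sin\theta(x,y)|$, which by hypothesis is $\le\rho\sqrt{|x-y|}$. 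The resulting $\sqrt{|x-y|}$ upgrade absorbs one power of the $|\txty|^{-3}$ kernel, producing the $|x-y|^{-5/2}$ integrand of the second summand, exactly as in \eqref{K5 final bound}. Setting $C_8$ to be the maximum of the constants from these two groups then closes the estimate.

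The main obstacle, as is typical of Constantin--Fefferman-style arguments on curvilinear domains, is bookkeeping rather than new analysis: one has to verify that the $(i,j)$-symmetrisation in $\na J_{211}+\na^\top J_{211}$ is compatible with the $\e^{kli}$- and $\e^{klj}$-antisymmetry used both in Lemma \ref{lemma: algebraic} and in the algebraic identity \eqref{geometric observation}; that the various remainders coming from the cut-off $\zeta$-derivatives, the index permutations, and the $\mathcal{R}_\F$-corrections hidden inside $\sh$ and $\fl$ all land in the $|x-y|^{-2}$ bin rather than secretly contributing to the $|x-y|^{-5/2}$ one; and that \eqref{TTT} is invoked consistently when converting between $|\txty|$ and $|x-y|$. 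Once these checks are performed, the proposition follows by straight assembly of the estimates established in Lemmas \ref{lemma: algebraic}, \ref{lemma: K5, 1}, \ref{lemma K5,2}, and \ref{lemma: K5, 3}.
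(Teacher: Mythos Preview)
Your proposal is correct and follows essentially the same assembly as the paper: the paper's own proof is the single line ``Immediate from Lemma \ref{lemma: K5, 3} and Eqs.\,\eqref{k5}, \eqref{k4},'' and your write-up simply unpacks how those three ingredients (the $K_4$-bound collecting $K_1$, $K_3$ and the cancellation from Lemma \ref{lemma: algebraic}, together with the $K_5$-bound via Lemmas \ref{lemma: K5, 1}--\ref{lemma: K5, 3}) fit together. One small inaccuracy in your final paragraph: the $\mathcal{R}_\F$-corrections inside $\sh$ do \emph{not} fall into the $|x-y|^{-2}$ bin---they remain part of $K_5$ and are handled by the same $|\sin\theta|\le\rho\sqrt{|x-y|}$ determinant bound (see the proof of Lemma \ref{lemma: K5, 3})---but this does not affect the conclusion since both bins appear on the right-hand side anyway.
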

\begin{proof}
Immediate from Lemma \ref{lemma: K5, 3} and Eqs.\,\eqref{k5}, \eqref{k4}.   \end{proof}

\subsection{Estimates for $J_{212}$}

The computation for $J_{212}$ is similar to that for $J_{211}$ in Sect.\,4.5. Recall from Sect.\,4.4:
\begin{align*}
[J_{212}(x)]^i &= \sum_{klp}\frac{\e^{kli}}{2\pi}\int_\Omega\chi(y)\zzz \frac{(\txtys)^p}{|\txtys|^3} \sigma_p \Xi^k_p(y)\,\dd y
\end{align*}
for $x\in U_b$. Then,  
\begin{align*}
\na_j [J_{212}]^i(x)&= \sum_{klpq}\sum_{\gamma\eta} \frac{\e^{kli}}{2\pi} \Bigg\{ \int_\Omega \frac{2}{d_4} \chi(y) \zeta'\bigg(\frac{|Tx-Ty|}{d_4}\bigg) \frac{(Tx-\tyst)^q\sigma_q}{|\txtys|}\,\Xi^j_q(y)\times\nonumber\\
&\quad\times\frac{(Tx-\tyst)^p\sigma_p}{|\txtys|^3}\,\Xi^k_p(y)\omega^l(y) \,\dd y\nonumber\\
&\quad+ \int_\Omega \chi(y)\zeta\bigg(\frac{|Tx-Ty|}{d_4}\bigg)\na_{x,j}\bigg[\frac{(Tx-\tyst)^p\sigma_k}{|\txtys|^3}\,\Xi^k_p(y)\bigg] \omega^l(y)\,\dd y \Bigg\}
\end{align*}
by a direct computation. Using similar arguments as for $J_{211}$ (in particular, Lemma \ref{lemma: algebraic}), we can deduce 
\begin{align}\label{m}
\Big|\frac{1}{2}\int_\Omega\Big(\na_j [J_{212}]^i(x) + \na_i [J_{212}]^j(x)\Big): \omega(x)\otimes \omega(x)\,\dd x\Big| \leq K_6 + K_7, 
\end{align}
where the ``nice'' term is bounded by
\begin{equation}\label{n}
K_6 \leq C_9 \int_\Omega |\omega(x)|^2\int_{U_b} \frac{|\omega(y)|}{|x-y|^2}\,\dd y\, \dd x
\end{equation}
for some constant $C_9$ depends only on $\|\F\|_{C^2(\overline{\Omega})}$. The ``bad'' term in Eq.\,\eqref{m} equals to
\begin{align}
K_7 &=
\frac{C_{10}}{2}\sum_{ijkpql} \int_\Omega\int_\Omega \chi(y)\zzz \times\nonumber\\
 &\qquad\qquad\times\bigg| \frac{\e^{kli}(\txtys)^p(\txtys)^q\omega^l(y)\sigma_k\Xi^j_q(x)\Xi^k_p(y) }{|\txtys|^5}\omega^i(x)\omega^j(x) \bigg|\,\dd y\,\dd x,
\end{align}
where $C_{10}$ is a universal constant. In the above these symbols are introduced:
\begin{eqnarray}
&&\shh\equiv\shh(x,y) :=M{\Xi}(y) \cdot \big(Tx-\tyst\big),\\
&&\fll\equiv\fll(x,y) := {\Xi}(x) \cdot \big(Tx-\tyst\big),
\end{eqnarray}
$z^\star$ denotes the reflection of $z \in \R^3_+$ across the boundary as usual, as well as
\begin{equation}
M=\begin{bmatrix}
1&0&0\\
0&1&0\\
0&0&-1
\end{bmatrix}.
\end{equation}
Thus, using the geometric observation in \cite{cf, constantin}, we find:
\begin{equation}\label{k7}
K_7= C_{10}\sum_{ijkpql} \int_\Omega\int_\Omega \chi(y)\zzz \bigg| \frac{\langle\fll, \omega(x)\rangle \det \big(\shh, \omega(y), \omega(x)\big)}{|\txtys|^5} \bigg|\,\dd y\,\dd x,
\end{equation}
which is analogous to $K_5$ in Eq.\,\eqref{k5} in Sect.\,4.5.

However, it is clear that
\begin{equation}\label{o}
\frac{|\fll|}{|\txtys|} \leq |\Xi(x)| \leq C_{11} = C(\|\F\|_{C^1(U_b)});
\end{equation}
in addition, assuming the hypothesis in Theorem \ref{thm: main}, one obtains
\begin{equation}\label{p}
\bigg|\det\bigg(\frac{\shh}{|\txtys|}, \what(y), \what(x)\bigg)\bigg| \leq C_{12} \sqrt{|x-y|}
\end{equation}
for $C_{12}=C(\|\F\|_{C^1(U_b)})$. Indeed, one easily bounds
$$
\Big|\frac{\shh}{|\txtys|}\Big|
\leq |M\OO| + |M\OO \na\F|,
$$
where both $M,\OO$ are orthogonal matrices, in view of \eqref{Xi}.  Putting together the estimates in Eqs.\,\eqref{m}\eqref{n}\eqref{k7}\eqref{o} and \eqref{p}, we can deduce:

\begin{proposition}\label{proposition: J212 vortex stretching}
Under the assumption of Theorem \ref{thm: main}, we have
\begin{align}
&\Big|\int_\Omega\frac{\na J_{212}(x) + \na^\top J_{212}(x)}{2} : \omega(x)\otimes\omega(x)\,\dd x\Big|\nonumber\\
&\qquad \qquad \qquad \leq  C_{13}\bigg\{\int_\Omega|\omega(x)|^2\int_{U_b}  \frac{|\omega(y)|}{|x-y|^2}\,\dd y\,\dd x +\int_\Omega |\omega(x)|^2 \int_{U_b} \frac{|\omega(y)|}{|x-y|^{5/2}}\,\dd y \,\dd x\bigg\}
\end{align}
where $C_{13}=C(\|\F\|_{C^2(\overline{\Omega})}, 1/d_4)$.
\end{proposition}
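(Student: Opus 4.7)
My plan is to read off the conclusion by simply assembling the estimates that have already been set up in the text immediately preceding the statement. Starting from inequality \eqref{m}, it suffices to bound the two pieces $K_6$ and $K_7$ separately. The term $K_6$ arises from the two ``derivative on cut-off'' contributions in the expansion of $\na_j[J_{212}]^i$: the $\zeta'$ term, and the part coming from $\na_{x,j}\Theta^{(i)}$, which is harmless thanks to Lemma \ref{lemma: Theta kernel}. Both of these are non-singular beyond the Biot--Savart scale, so the bound \eqref{n} gives exactly the $|x-y|^{-2}$ contribution in the conclusion, with constant depending on $\|\F\|_{C^2(\overline{\Omega})}$ and $1/d_4$.

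The main substance is $K_7$, which is structurally identical to the term $K_5$ treated in Sect.\,4.5, but with the reflected point $\tyst$ in place of $Ty$ (and with the algebraic symbols $\shh$, $\fll$ in place of $\sh$, $\fl$). I would first use Lemma \ref{lemma: algebraic} (adapted to the reflected variables by inserting the diagonal matrix $M = \mathrm{diag}(1,1,-1)$, which does not affect the cancellation of the symmetric part) to reduce the leading singular piece to the form \eqref{k7}. Then I would apply the geometric identity \eqref{geometric observation} to recognise
$$\frac{\big|\langle \fll,\omega(x)\rangle\, \det(\shh,\omega(y),\omega(x))\big|}{|\txtys|^5}
= \frac{|\fll|}{|\txtys|}\cdot \Big|\det\Big(\tfrac{\shh}{|\txtys|},\what(y),\what(x)\Big)\Big|\cdot \frac{|\omega(x)|^2\,|\omega(y)|}{|\txtys|^3},$$
where I have absorbed two factors of $|\txtys|$ from the normalisation of $\shh$ and $\fll$ and one factor from the normalisations of $\omega(x)$ and $\omega(y)$ inside the determinant.

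Now I would plug in the three pointwise bounds already recorded: \eqref{o} gives $|\fll|/|\txtys|\le C_{11}$, the vorticity alignment hypothesis together with \eqref{p} gives $|\det(\shh/|\txtys|,\what(y),\what(x))|\le C_{12}\sqrt{|x-y|}$, and the comparability of $|\txtys|$ with $|x-y|$ (following from the reflection estimate $|\txtys|\ge|\txty|$ and the almost-isometry bound \eqref{TTT}, so that $|\txtys|\ge |x-y|/2$) gives $|\txtys|^{-3}\le 8|x-y|^{-3}$. Multiplying and integrating delivers
$$K_7 \le C\,\int_\Omega |\omega(x)|^2\int_{U_b}\frac{|\omega(y)|}{|x-y|^{5/2}}\,\dd y\,\dd x,$$
which is the second term in the conclusion. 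Combining with the bound on $K_6$ finishes the proof with $C_{13}=C(\|\F\|_{C^2(\overline\Omega)},1/d_4)$.

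The only place where genuine care is required is the reflection step: I must check that passing from $Ty$ to $\tyst$ does not spoil the comparability $|\txtys|\simeq |x-y|$ and does not break the algebraic cancellation of Lemma \ref{lemma: algebraic}. The first point is essentially trivial, since reflection across $\{x^3=0\}$ can only increase the Euclidean distance from an interior point. The second point is where the sign matrix $M$ enters; the cancellation still works because the symmetric-in-$(i,k)$ part of $M\Xi(y)\cdot\Xi(x)^\top$ is still killed by the Levi-Civita factor $\e^{kli}$ after symmetrising in $(i,j)$, with a residue of size $O(|x-y|)$ contributing to the non-singular $|x-y|^{-2}$ piece already absorbed into $K_6$. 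This is the only conceptual obstacle; everything else is bookkeeping parallel to Sect.\,4.5.
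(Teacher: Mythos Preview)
Your proposal is correct and follows exactly the route taken in the paper: the proof there is simply the one-line assembly ``Putting together the estimates in Eqs.\,\eqref{m}\eqref{n}\eqref{k7}\eqref{o} and \eqref{p}'', and you have faithfully unpacked precisely those ingredients. Your additional remarks on why the $M$-twisted version of Lemma~\ref{lemma: algebraic} still yields the needed cancellation (symmetry of $\sum_p\sigma_p\Xi^k_p(y)\Xi^j_p(x)$ in $(k,j)$ up to $O(|x-y|)$), and on the trivial inequality $|\txtys|\ge|\txty|\ge|x-y|/2$, are exactly the two verifications the paper leaves implicit when it writes ``similar arguments as for $J_{211}$''.
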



\subsection{Estimates for $J_{213}$}\label{subsec J213}
This is a good term, due to the  decay properties of the kernel $\TI$. We recall it from \eqref{j2: three terms}:
\begin{equation*}
[J_{213}(x)]^i = \sum_{kl}\frac{\e^{kli} b^{(i)}_3}{6\pi} \int_\Omega \chi(y)\zzz\frac{\na_k \big[\TI\big(Tx,\tyst\big)\big]\omega^l(y)}{|\txtys|}\,\dd y,
\end{equation*}
where the $\TI$ term is given by \eqref{theta in green matrix}:
\begin{equation*}
\TI\big(Tx,\tyst\big) = \int_0^\infty e^{a^{(i)}|\txtys|s} \, \frac{\xi^3 + b_3^{(i)}s}{\big[1+2\langle\bbi,\xi\rangle s + s^2\big]^{3/2}}\,\dd s,
\end{equation*}
and
\begin{equation*}
\xi = \frac{\txtys}{|\txtys|}.
\end{equation*}

\begin{lemma}\label{lemma: Theta kernel}
For the regular oblique derivative bondary condition \eqref{homog bc}, {\it i.e.}, if for each $i\in\{1,2,3\}$ one has
\begin{equation}
b^{(i)}_3 > 0,\qquad \n = \frac{\p}{\p x^3} \text{ on } \Sigma,
\end{equation}
$\TI\big(Tx,\tyst\big)$ is a smooth function in $x$ and $y$.
\end{lemma}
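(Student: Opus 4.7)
The plan is to show that $\TI(Tx, \tyst)$ is $C^\infty$ in $(x, y)$ on the open set $\{(x,y) : Tx \neq \tyst\}$ by differentiation under the integral sign. Writing $z := Tx - \tyst$ and $\xi := z/|z|$, I need three ingredients: (i) joint smoothness of the $s$-integrand in $(x, y, s)$ wherever $z \neq 0$, (ii) uniform positivity of the denominator $1 + 2\langle \bbi, \xi\rangle s + s^2$ on $s \in [0,\infty)$, and (iii) a dominated-convergence argument applicable to all $(x,y)$-derivatives.

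The crucial ingredient is (ii), which follows by completing the square:
\begin{equation*}
1 + 2\langle \bbi, \xi\rangle s + s^2 = (s + \langle\bbi, \xi\rangle)^2 + (1 - \langle\bbi, \xi\rangle^2),
\end{equation*}
so the expression is bounded below on $s \geq 0$ by $\min\{1,\, 1 - \langle\bbi, \xi\rangle^2\}$, which is strictly positive precisely when $\langle\bbi, \xi\rangle > -1$. Here both hypotheses of the lemma come in. By the equality case of Cauchy--Schwarz between unit vectors, $\langle\bbi, \xi\rangle = -1$ would force $\xi = -\bbi$, giving $\xi^3 = -b^{(i)}_3 < 0$. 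However, since $\n = \p/\p x^3$ in the chosen boundary chart, the straightening map sends $\Omega$ into $\{x^3 \geq 0\}$, so $(Tx)^3, (Ty)^3 \geq 0$, whence $z^3 = (Tx)^3 + (Ty)^3 \geq 0$ and $\xi^3 \geq 0$. This contradiction rules out $\langle\bbi, \xi\rangle = -1$ and yields strict positivity of the denominator uniformly in $s$ (in a neighborhood of any $(x,y)$ with $z \neq 0$).

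Granted (ii), the integrand $e^{a^{(i)}|z|s}(\xi^3 + b^{(i)}_3 s)(1 + 2\langle \bbi, \xi\rangle s + s^2)^{-3/2}$ is bounded at $s = 0$ and $O(s^{-2})$ as $s \to \infty$ (using $a^{(i)} \leq 0$), so the $s$-integral converges absolutely for each admissible $(x, y)$. For (iii), each derivative $\p^\alpha_{(x, y)}$ of the integrand is a finite sum of terms, each $C^\infty$ on $\{z \neq 0\}$ and each retaining $O(s^{-2})$ decay at infinity: when $a^{(i)} < 0$, any additional powers of $s$ produced by differentiating $e^{a^{(i)}|z|s}$ are absorbed by the exponential factor; when $a^{(i)} = 0$, that factor is identically $1$ and contributes no $s$-dependence at all. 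Dominated convergence then justifies differentiation under the integral sign to all orders.

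The main obstacle is exactly step (ii): the uniform positivity of the denominator relies essentially on \emph{both} hypotheses $b^{(i)}_3 > 0$ (regularity of the oblique condition) and $\n = \p/\p x^3$ (normal alignment in the straightened chart); dropping either would admit the possibility $\xi = -\bbi$ somewhere on $\{z \neq 0\}$, and the $s$-integrand would then fail to be integrable at the vanishing point of the denominator. Everything else reduces to standard dominated-convergence bookkeeping, so no further analytic subtlety arises.
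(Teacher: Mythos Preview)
Your proof is correct and follows essentially the same route as the paper's: both establish smoothness by differentiation under the integral sign, using that (a) the denominator $1+2\langle\bbi,\xi\rangle s+s^2$ stays bounded away from zero for $s\geq 0$, and (b) all $(x,y)$-derivatives of the integrand retain enough $s$-decay (polynomial when $a^{(i)}=0$, absorbed by the exponential when $a^{(i)}<0$). Your argument is in fact more explicit than the paper's on point (a): the paper simply invokes $b^{(i)}_3>0$, $|\bbi|=1$, $a^{(i)}\leq 0$ without spelling out that $b^{(i)}_3>0$ together with $\xi^3\geq 0$ (from $(Tx)^3+(Ty)^3\geq 0$ in the straightened chart) is precisely what excludes $\xi=-\bbi$, i.e.\ $\langle\bbi,\xi\rangle=-1$. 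One small slip: your stated lower bound $\min\{1,\,1-\langle\bbi,\xi\rangle^2\}$ vanishes also when $\langle\bbi,\xi\rangle=+1$, so it does not by itself give ``strictly positive precisely when $\langle\bbi,\xi\rangle>-1$''; but since $1+2\langle\bbi,\xi\rangle s+s^2=|s\bbi+\xi|^2$ and $|\bbi|=|\xi|=1$, the denominator vanishes for some $s\geq 0$ only if $\xi=-\bbi$, so your conclusion stands.
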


\begin{proof}
First of all, note that $|\txtys|\neq 0$ from the definition of the boundary-straightening map $T=T_b$; hence $\xi$ is well defined and smooth for all $x$ and $y$, so are
\begin{equation}\label{r}
\na_{x,j} |\txtys| = 2\frac{(\txtys)^k \big[\Xi^j_k(x)+\smallo(|x-x_b|)\big]}{|\txtys|}
\end{equation}
and
\begin{align}\label{s}
\na_{x,j}\xi^k &= \frac{\Xi^j_k(x)+\smallo(|x-x_b|)}{|\txty|} \nonumber\\
&\qquad - \frac{2(\txtys)^k(\txtys)^l\big[\Xi^j_l(x) + \smallo(|x-x_b|)\big]}{|\txtys|^3}.
\end{align}
Next, we can compute
\begin{align}
\na_{x,j} \TI\big(Tx,\tyst\big) &= \int_0^\infty e^{a^{(i)}|\txtys|s} \frac{(\xi^3+b^{(i)}_3s)a^{(i)}s\na_{x,j}|\txtys|}{\big[1+2\langle\bbi,\xi\rangle s + s^2\big]^{3/2}}\,\dd s\nonumber\\
&+\int_0^\infty e^{a^{(i)}|\txtys|s} \frac{\na_{x,j}\xi^3}{\big[1+2\langle\bbi,\xi\rangle s + s^2\big]^{3/2}}\,\dd s\nonumber\\
&-\int_0^\infty e^{a^{(i)}|\txtys|s} \frac{(\xi^3+b_3^{(i)}s)\langle{\bbi,\na_{x,j}\xi}\rangle}{\big[1+2\langle\bbi,\xi\rangle s + s^2\big]^{5/2}}\,\dd s,
\end{align}
and by a simple induction, for any multi-index $\alpha \in \mathbb{N}^\mathbb{N}$ we have
\begin{equation}\label{v}
\na^\alpha_x \TI\big(Tx,\tyst\big) = \int_0^\infty e^{a^{(i)}|\txtys|s} \mathscr{P}_\alpha(x,y,s)\,\dd s,
\end{equation}
where $\mathscr{P}_\alpha(x,y,s)$ is a linear combination of polynomials in $s$. The coefficients of such polynomials are products of components of $\xi$, $\na_x |\txtys|, \na_x\xi$ and $(1+2\langle\bbi,\xi\rangle s + s^2)^k$ for $k \leq -3/2$. In view of \eqref{r}, \eqref{s} and the assumptions $a^{(i)}\leq 0$, $b^{(i)}_3>0$, $|\bbi|=1$ for the regular oblique derivative condition, the integral \eqref{u} converges for any multi-index $\alpha$, and is continuous in the $x$-variable. Finally, the derivatives $\na^\alpha_y \TI\big(Tx,\tyst\big)$ differs from \eqref{u} only by multiplications of the constant matrix $M={\rm diag}\,(1,1,-1)$. Hence the assertion follows.  \end{proof}

As a consequence, the gradient of $J_{213}$:
\begin{align}
\na_j[J_{213}(x)]^i &= \sum_{kl}\frac{\e^{kli}b_3^{(i)}}{6d_4\pi} \int_\Omega \chi(y)\zeta'\bigg(\frac{|\txty|}{d_4}\bigg)\times\nonumber\\
&\qquad\times\frac{\big(\na_{x,j}|\txtys|\big)\na_{y,k} \big[\TI\big(Tx,\tyst\big)\big]\omega^l(y)}{|\txtys|}\,\dd y\nonumber\\
&\,+\sum_{kl}\frac{\e^{kli}b_3^{(i)}}{6\pi}\int_\Omega \chi(y)\zzz \frac{\na_{x,j}\na_{y,k}\big[\TI\big(Tx,\tyst\big)\big]\omega^l(y)}{|\txtys|}\,\dd y\nonumber\\
&\,+ \sum_{kl}\frac{\e^{kli}b_3^{(i)}}{6\pi}\int_\Omega \chi(y)\zzz \frac{\na_{y,k}\big[\TI\big(Tx,\tyst\big)\big]\big(\na_{x,j}|\txty|\big)\omega^l(y)}{|\txtys|}\,\dd y
\end{align}
satisfies good bounds (so does its symmetrisation), because
$$
|\na|\txtys|| \leq C_{14} = C(\|\F\|_{C^1(U_b)})
$$ and $\TI(Tx,\tyst) \in C^\infty$ by Eq.\,\eqref{r} and Lemma \ref{lemma: Theta kernel}. More precisely, 
\begin{proposition}\label{proposition: J213 vortex stretching}
Under the assumption of Theorem \ref{thm: main}, we have
\begin{align}
\Big|\int_\Omega\frac{\na J_{213}(x) + \na^\top J_{213}(x)}{2} : \omega(x)\otimes\omega(x)\,\dd x\Big| \leq  C_{15} \int_\Omega |\omega(x)|^2\int_\Omega \frac{|\omega(y)|}{|x-y|}\,\dd y\,\dd x,
\end{align}
where $C_{15}=C(\|\F\|_{C^1(\overline{\Omega})}, 1/d_4, \bbi,  a^{(i)})$.
\end{proposition}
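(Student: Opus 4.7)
The plan is to substitute the displayed expression for $\na_j [J_{213}(x)]^i$ into the stretching integral, symmetrize in $(i,j)$, contract against $\omega(x)\otimes\omega(x)$, and estimate the three resulting pieces pointwise. Three bounds on $U_b\times U_b$ will serve as the inputs: first, $|\txtys|\geq|\txty|\geq|x-y|/2$, which follows from Eq.\,\eqref{TTT} together with the identity $|\txtys|^2 = |\txty|^2 + 4(Tx)^3(Ty)^3 \geq |\txty|^2$ (nonnegativity holds since the boundary-straightening map $T$ sends $\Omega$ into $\overline{\R^3_+}$); second, uniform boundedness of $\na_x|\txty|$ and $\na_x|\txtys|$ by constants depending only on $\|\F\|_{C^1(U_b)}$, delivered directly by Eq.\,\eqref{r}; and third, a quantitative sharpening of the smoothness of $\TI$ provided by Lemma \ref{lemma: Theta kernel}.

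For the third input I would exploit the rescaling $t = |\txtys|s$ inside \eqref{theta in green matrix}, which gives the representation
\[
\TI(Tx,\tyst) \,=\, |\txtys|\int_0^\infty e^{a^{(i)}t}\frac{(\txtys)^3 + b_3^{(i)}t}{|t\bbi + \txtys|^3}\,\dd t,
\]
revealing that, modulo the benign exponential factor ($a^{(i)}\leq 0$), $\TI$ is homogeneous of degree zero in its vectorial argument $\txtys$. The regular-obliqueness condition $b_3^{(i)}>0$ combined with the sign $(\txtys)^3 = (Tx)^3 + (Ty)^3 \geq 0$ forces $\angle(\bbi,\txtys)$ to stay bounded away from $\pi$: indeed $\bbi\cdot(\txtys)\geq-|\txtys|\sqrt{1-(b_3^{(i)})^2}$, so $|t\bbi+\txtys|^2\geq(b_3^{(i)})^2|\txtys|^2$ uniformly, while the third-component bound $|t\bbi+\txtys|\geq b_3^{(i)}t + (\txtys)^3$ controls the large-$t$ tail. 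Differentiation under the integral, combined with Eqs.\,\eqref{r}--\eqref{s}, then yields the required pointwise bounds on $\TI$ and its first and mixed second derivatives.

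With these bounds in hand, the three summands of $\na_j [J_{213}]^i$ are estimated termwise. The $\zeta'$-piece is supported in $\{d_4/4 \leq |\txty|\leq 3d_4/4\}$, on which all factors are smooth and uniformly bounded; its contribution is dominated by $C\int_\Omega |\omega(y)|\,\dd y \leq C'\int_\Omega |\omega(y)|/|x-y|\,\dd y$, using $\mathrm{diam}(\Omega)<\infty$. The remaining two pieces combine the controlled $\TI$-factors with the division by $|\txtys|$, which the first input converts into a $C/|x-y|$ kernel against $|\omega(y)|$. Symmetrizing in $(i,j)$ and contracting against $\omega(x)\otimes\omega(x)$, then integrating in $x$, produces the stated inequality, with $C_{15}$ absorbing the constants depending on $\|\F\|_{C^1(\overline{\Omega})}$, $1/d_4$, $\bbi$, and $a^{(i)}$.

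The main technical obstacle is the sharp quantitative control in the third input, especially the mixed second derivative $\na_x\na_y\TI$; the rescaling identity above is essential here, as it exposes the factor $|\txtys|$ hidden in the original integral for $\TI$ and thereby reduces the analysis to routine estimates for a nearly-homogeneous-degree-zero smooth kernel on the upper half-sphere.
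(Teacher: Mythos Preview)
Your proposal is correct and follows essentially the same route as the paper: compute $\na_j[J_{213}]^i$ as three terms, invoke the boundedness of $\na|\txtys|$ from Eq.\,\eqref{r} and the smoothness of $\TI$ from Lemma \ref{lemma: Theta kernel}, and then read off the $|x-y|^{-1}$ kernel from the lone singular factor $|\txtys|^{-1}$. The paper's own argument is terse---it simply cites Lemma \ref{lemma: Theta kernel} and Eq.\,\eqref{r} and declares the bound---whereas your rescaling $t=|\txtys|s$ makes the degree-zero homogeneity of $\TI$ explicit and thereby supplies the \emph{uniform} quantitative control on $\TI$ and its derivatives that the paper leaves implicit in the word ``smooth''; this is a genuine clarification, not a different method. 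One minor remark: your appeal to $\mathrm{diam}(\Omega)<\infty$ for the $\zeta'$-piece should instead invoke the compact support of $\chi(y)\zeta'(|\txty|/d_4)$ in $U_b$, since $\Omega$ may be unbounded (half-space, cylinder); the conclusion is unaffected.
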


The above proposition can be proved without using the hypothesis on vorticity directions. 

\subsection{Estimates for $J_{22}$}
Next, $J_{22}$ is also a good term (recall from Eq.\,\eqref{j2: three terms}):
\begin{equation}
[J_{22}(x)]^i=-\sum_{kjl}\e^{klj}\int_\Omega \na_k\bigg[\chi(y)\zeta\bigg(\frac{|Tx-Ty|}{d_4} \bigg) \bigg] \GG_{ij}(Tx,Ty) \omega^l(y)\,\dd y.
\end{equation}
Clearly, by the definition of $\chi$ and $\zeta$, 
\begin{equation}
\bigg| \na_k\bigg[\chi(y)\zeta\bigg(\frac{|Tx-Ty|}{d_4} \bigg) \bigg]\bigg| \leq C_{16} = C(\|\F\|_{C^1(\overline{\Omega})}, 1/d_4).
\end{equation}
In addition, in light of  Lemma \ref{lemma: Theta kernel},
\begin{equation*}
\GG_{ij}(Tx,Ty)=\frac{\delta_{ij}}{4\pi}\bigg\{\frac{1}{|\txty|} - \frac{1}{|\txtys|}\bigg(1+\frac{2b_3^{(i)}}{3}\TI\big(Tx,\tyst\big)\bigg) \bigg\}
\end{equation*}
has a singularity of order $-1$, {\it i.e.},
\begin{equation}
|\GG_{ij}(Tx, Ty)| \leq C_{17} \frac{1}{|x-y|}
\end{equation} 
for some constant $C_{17}=C(\|\F\|_{C^1(\overline{\Omega})}, \bbi,  a^{(i)})$. Therefore, we may easily deduce
\begin{proposition}\label{proposition: J22 vortex stretching}
Under the assumption of Theorem \ref{thm: main}, we have
\begin{align}
\Big|\int_\Omega\frac{\na J_{22}(x) + \na^\top J_{22}(x)}{2} : \omega(x)\otimes\omega(x)\,\dd x\Big| \leq  C_{18} \int_\Omega |\omega(x)|^2\int_\Omega \frac{|\omega(y)|}{|x-y|^2}\,\dd y\,\dd x,
\end{align}
where $C_{18}=C(\|\F\|_{C^1(\overline{\Omega})}, 1/d_4, \bbi,  a^{(i)})$.
\end{proposition}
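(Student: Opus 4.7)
The plan is to follow the pattern of the preceding estimates, but without invoking the geometric hypothesis on $\sin\theta$: the $J_{22}$ term is already ``tame'' because the $\na_k$ in its definition is applied to the $y$-cutoff $\chi(y)\zeta(|Tx-Ty|/d_4)$ rather than to a singular kernel, so only the pointwise size of $\GG_{ij}(Tx,Ty)$ and of its $x$-derivative enter. The proof should therefore reduce to (i) a pointwise bound on $|\na_x J_{22}(x)|$, followed by (ii) a routine contraction with $\omega(x)\otimes\omega(x)$.

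For step (i), I would differentiate
\begin{equation*}
[J_{22}(x)]^i = -\sum_{kjl}\e^{klj}\int_\Omega \na_{y,k}\!\bigg[\chi(y)\zeta\bigg(\frac{|Tx-Ty|}{d_4}\bigg)\bigg]\GG_{ij}(Tx,Ty)\,\omega^l(y)\,\dd y
\end{equation*}
in $x$. The $x$-dependence appears only through $\zeta(|Tx-Ty|/d_4)$ and through $\GG_{ij}(Tx,Ty)$. Chain rule together with \eqref{DT matrix} shows that $\na_x \zeta(\cdot)$ is bounded by a constant depending only on $\|\F\|_{C^1}$ and $1/d_4$. For $\na_x \GG_{ij}(Tx,Ty)$, the explicit formula \eqref{green matrix for oblique} decomposes $\GG_{ij}$ into $|Tx-Ty|^{-1}$, $|Tx-\tyst|^{-1}$ and $\TI(Tx,\tyst)/|Tx-\tyst|$; the first two produce an $|x-y|^{-2}$ singularity after differentiation via \eqref{TTT}, while the last is handled by \eqref{r} together with the smoothness statement in Lemma \ref{lemma: Theta kernel}, which bounds $|\na_x \TI|$ uniformly in $(x,y)$. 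Combining these with the already-recorded bound $|\GG_{ij}(Tx,Ty)|\leq C_{17}/|x-y|$ and absorbing the milder $|x-y|^{-1}$ contribution (coming from the derivative falling on the cutoff) into the dominant $|x-y|^{-2}$ term via $|x-y|\leq \mathrm{diam}\,U_b$, I obtain
\begin{equation*}
|\na_x J_{22}(x)| \;\leq\; C\int_{U_b}\frac{|\omega(y)|}{|x-y|^2}\,\dd y,
\end{equation*}
with $C$ of the asserted form. Symmetrization does not affect this pointwise bound.

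For step (ii), I use $|A:B\otimes B|\leq |A||B|^2$ pointwise, obtaining
\begin{equation*}
\Big|\int_\Omega \tfrac{1}{2}\bigl(\na J_{22}+\na^\top J_{22}\bigr):\omega\otimes\omega\,\dd x\Big|\;\leq\;\int_\Omega |\na_x J_{22}(x)|\,|\omega(x)|^2\,\dd x,
\end{equation*}
and then substituting the pointwise bound and enlarging the inner domain from $U_b$ to $\Omega$ (legitimate by nonnegativity of the integrand) yields exactly the desired estimate with $C_{18}=C(\|\F\|_{C^1(\overline{\Omega})},1/d_4,\bbi,a^{(i)})$.

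I do not anticipate a genuine obstacle: compared with the $J_{21}$ analysis of Sections~4.4--4.8, the $J_{22}$ term carries no $|x-y|^{-3}$ kernel requiring cancellation (because the gradient is spent on the cutoff, not on the Green's matrix), so neither Lemma \ref{lemma: algebraic} nor the Constantin--Fefferman geometric identity is needed here. The only piece of bookkeeping worth emphasizing is verifying that the $\TI$-contribution to $\na_x\GG_{ij}$ is indeed bounded; this is a direct consequence of Lemma \ref{lemma: Theta kernel} and the representation \eqref{v} of $\na^\alpha_x\TI$, which show that the potentially dangerous exponential factor $e^{a^{(i)}|Tx-\tyst|s}$ with $a^{(i)}\leq 0$ and $b^{(i)}_3>0$ makes the relevant $s$-integrals converge uniformly in $(x,y)$.
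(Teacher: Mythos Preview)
The proposal is correct and takes essentially the same approach as the paper: both observe that the $y$-derivative of the cutoff $\chi(y)\zeta(|Tx-Ty|/d_4)$ is bounded by a constant $C(\|\F\|_{C^1},1/d_4)$, that $\GG_{ij}(Tx,Ty)$ carries only an order $-1$ singularity (hence order $-2$ after the $x$-derivative, using Lemma~\ref{lemma: Theta kernel} for the $\TI$-piece), and then contract pointwise with $\omega(x)\otimes\omega(x)$. Your write-up is simply more detailed than the paper's two-line sketch---in particular you make explicit the product-rule split when $\na_x$ falls on the cutoff versus on $\GG_{ij}$, and the absorption of the milder $|x-y|^{-1}$ contribution---and you correctly note (as the paper also remarks after the proposition) that the vorticity-alignment hypothesis is not used here.
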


Again, in Proposition \ref{proposition: J22 vortex stretching} we do not need the hypothesis on vorticity direction alignment.

\subsection{Estimates for $J_{23}$: the Boundary Term} One of the main new features of this work is the analysis of the boundary term, reproduced below from Eq.\,\eqref{j2: three terms}:
\begin{equation*}
[J_{23}(x)]^i = -\sum_j \int_\Sigma \chi(y)\zzz \GG_{ij}(Tx, Ty) (\omega \times \n)^j\,\dd\htwo(y).
\end{equation*}
In the literature the geometric regularity conditions for the weak solutions to the Navier--Stokes equations are usually studied on the whole space $\R^3$, {\it i.e.}, in the absence of physical boundaries of the fluid domain. In Beir\~{a}o da Veiga--Berselli \cite{bb2} and Beir\~{a}o da Veiga \cite{bb3} the boundary conditions were first considered. Therein the slip-type condition
\begin{equation}\label{non physical bc}
\omega \times \n = 0 \qquad \text{ on } [0,T^\star[\times\Sigma
\end{equation}
was imposed (which were first studied by Solonnikov--\u{S}\u{c}adilov \cite{ss}), so that the boundary term vanishes: $J_{23} \equiv 0$. It is a very strong condition on the geometry of the vortex structure $\Sigma$, which entails the vorticity to be perpendicular to the boundary of the fluid domain. 

In our current work the condition \eqref{non physical bc} is not required. Instead, we only require that the sine of the turning angle of vorticity $\theta$ remains $(1/2)$-H\"{o}lder {\em up to the boundary}, {\it i.e.}, the hypotheses of Theorem \ref{thm: main}. We shall establish:
\begin{proposition}\label{proposition: J23 vortex stretching}
Under the assumption of Theorem \ref{thm: main}, we have
\begin{align}
&\Big|\int_\Omega\frac{\na J_{23}(x) + \na^\top J_{23}(x)}{2} : \omega(x)\otimes\omega(x)\,\dd x\Big| \nonumber\\
 &\qquad\qquad \leq  C_{19} \bigg\{\int_\ooo |\omega(x)|^2\int_\Sigma \frac{|\omega(y)|}{|x-y|^{3/2}}\,\dd y\,\dd x  \nonumber\\
&\qquad\qquad\qquad + \int_\ooo |\omega(x)|^2\int_\Sigma \frac{|\omega(y)|}{|x-y|^{1/2}}\,\dd y\,\dd x  \nonumber\\
 &\qquad\qquad\qquad+ \int_\ooo |\omega(x)|^2\int_\Sigma {|\omega(y)|}{|x-y|^{1/2}}\,\dd y\,\dd x  \bigg\},
\end{align}
where $C_{19}=C(\|\F\|_{C^1(\overline{\ooo})}, 1/d_4, \bbi,  a^{(i)})$.
\end{proposition}

Here and throughout, for $E \subset \R^3$, $\delta >0$, we write
\begin{equation*}
O_\delta (E) := \big\{x+y: |x|<\delta, y \in E\big\}.
\end{equation*}
Also, we recall that $d_3$ defined in Eq.\,\eqref{d_3} satisfies $d_3 \geq 16 d_4$. 
\begin{proof}
	By a direct computation we can get
	\begin{align}\label{k8 k9 k10}
	\na_j [J_{23}(x)]^i &= \frac{1}{2\pi}\sum_{k} \int_\Sigma\chi(y)\zeta'\bigg(\frac{|\txty|}{d_4}\bigg) \frac{(\txty)^k}{|\txty|}\Big(\Xi(z)^j_k\Big)(\omega\times\n)^i(y)\,\dd\htwo(y)\nonumber\\
	&\qquad- \frac{1}{4\pi}\sum_k\int_\Sigma\chi(y)\zzz \frac{(\txty)^k}{|\txtys|^3}\Big(\Xi(z)^j_k \Big)(\omega\times\n)^i(y)\,\dd\htwo(y)\nonumber\\
	&\qquad -\frac{1}{4\pi}\sum_k\int_\Sigma\chi(y)\zzz \frac{\sigma_k(\txtys)^k}{|\txtys|^3}\Big(\Xi(z')^j_k \Big)(\omega\times\n)^i(y)\,\dd\htwo(y)\nonumber\\ 
	&\qquad -\frac{1}{4\pi} \int_\Sigma \chi(y) \zzz \frac{\na_{x,j}\Big[\TI(Tx,\tyst)\Big]}{|\txtys|} (\omega \times \n)^i(y)\,\dd \htwo(y)\nonumber\\
	&=: [K_8(x)]^i_j + [K_9(x)]^i_j+[K_{10}(x)]^i_j+[K_{11}(x)]^i_j,
	\end{align}
where $z$ ($z'$) is a point on the segment connecting $Tx$ and $Ty$ ($\tyst$, resp.), found by the Taylor expansion. We need to bound $\int_\Omega|(K_l + K_l^\top):\omega \otimes \omega|\,\dd x$ for $l\in\{8,9,10,11\}$. 

For this purpose, parallel to the treatments in Sects.\,4.5--4.6, let us define two vector fields:
\begin{equation}
\overline{\Psi}^\sharp (x,y):= \sum_{jk} (\txty)^k\Xi(z)^j_k\frac{\p}{\p x^j},
\end{equation}
and 
\begin{equation}
\overline{\Psi}^\flat (x,y) := \sum_{jk} \sigma_k (\txtys)^k \Xi(z')^j_k\frac{\p}{\p x^j}.
\end{equation}
So, we can compute
\begin{align}
&\int_\Omega\big|(K_8 + K_8^\top):\omega \otimes \omega\big|(x)\,\dd x \nonumber\\
&\,= \frac{1}{\pi} \int_\Omega \int_\Sigma \chi(y)\bigg|\zeta'\bigg(\frac{|\txty|}{d_4}\bigg)\bigg|  \frac{1}{|\txty|}\big|\big\langle\shhh,\omega(x)\big\rangle \big\langle\omega(y)\times\n (y),\omega(x)\big\rangle\big| \,\dd\htwo(y)\,\dd x.
\end{align}
It is crucial to recognise the {\em determinant structure in disguise}:
\begin{align}
\big\langle\omega(y)\times\n (y),\omega(x)\big\rangle &= \sum_{ijk}\e^{ijk}\omega^i(y)\n^j(y)\omega^k(x)\nonumber\\
&=\det\big(\omega(y),\n(y),\omega(x)\big).
\end{align}
Again by the geometric observation due to Constantin \cite{constantin} and Constantin--Fefferman \cite{cf}, one may deduce
\begin{equation}\label{wnw}
\big|\big\langle\omega(y)\times\n (y),\omega(x)\big\rangle \big| \leq |\omega(y)||\omega(x)| |\sin\theta(x,y)|
\end{equation} 
where $\theta(x,y)=\angle(\omega(x),\omega(y))$. In addition, in view of the definition of $\Xi$ (see \eqref{Xi}), clearly
\begin{equation*}
\frac{|\shhh|}{|\txty|} \leq C_{20} = C(\|\F\|_{C^1(U_b)}).
\end{equation*}
Thus, for $C_{21}$ with the same dependence as $C_{20}$, we have
\begin{align}\label{k8}
\int_\Omega\big|(K_8 + K_8^\top):\omega \otimes \omega\big|(x)\,\dd x\leq C_{21} \int_{O_{d_3}(U_b)}|\omega(x)|^2 \int_{\Sigma\cap U_b} |\omega(y)| \sqrt{|x-y|} \,\dd \htwo(y)\,\dd x,
\end{align}
provided that 
\begin{equation}\label{theta assumption}
|\sin\theta(x,y)| \leq \rho^{-1} \sqrt{|x-y|}
\end{equation}
as assumed by Theorem \ref{thm: main}.

The terms $K_9$, $K_{10}$ and $K_{11}$ are estimated in similar manners. Indeed,
\begin{align}\label{k9}
&\int_\Omega\big|(K_9 + K_9^\top):\omega \otimes \omega\big|(x)\,\dd x\nonumber\\
&\qquad \leq C_{22} \bigg|\int_{\Omega}\int_\Sigma\chi(y)\zzz \frac{\big\langle {\shhh}, \omega(x)\big\rangle}{|\txtys|^3}\big\langle\omega(x), \omega(y)\times\n(y)\big\rangle\,\dd \htwo(y)\,\dd x\bigg|\nonumber\\
&\qquad \leq C_{22} \int_{O_{d_3}(U_b)}|\omega(x)|^2 \int_{\Sigma\cap U_b} \frac{|\omega(y)|}{|x-y|^{3/2}} \,\dd \htwo(y)\,\dd x,
\end{align}
thanks to Eqs.\,\eqref{wnw}\eqref{theta assumption}\eqref{TTT} and the simple fact $|\txty|\leq|\txtys|$ for $x,y \in U_b$. Here $C_{22}$ depends only on $\|\F\|_{C^1(U_b)}$ and the hypothesis of Theorem \ref{thm: main}. The bound for $K_{10}$ is a variant of \eqref{k9}: using arguments parallel to those in Sect.\,4.6 (see Proposition \ref{proposition: J212 vortex stretching}), we get
\begin{align}\label{k10}
&\int_\Omega\big|(K_{10} + K_{10}^\top):\omega \otimes \omega\big|(x)\,\dd x\nonumber\\
&\qquad\qquad\qquad \leq C_{23} \int_{O_{d_3}(U_b)}|\omega(x)|^2 \int_{\Sigma\cap U_b} \frac{|\omega(y)|}{|x-y|^{3/2}} \,\dd \htwo(y)\,\dd x,
\end{align}
where $C_{23}$ has the same dependent variables as $C_{22}$. Lastly, for $K_{11}$, let us recall from Lemma \ref{lemma: Theta kernel} that $\TI$ is smooth in its variables; thus, the singularity in this term has order $(-1)$. We can thus conclude
\begin{align}\label{k11}
&\int_\Omega\big|(K_{11} + K_{11}^\top):\omega \otimes \omega\big|(x)\,\dd x\nonumber\\
&\qquad\qquad\qquad \leq C_{24} \int_{O_{d_3}(U_b)}|\omega(x)|^2 \int_{\Sigma\cap U_b} \frac{|\omega(y)|}{|x-y|^{1/2}} \,\dd \htwo(y)\,\dd x,
\end{align}
for $C_{24}$ depending on $\|\F\|_{C^1}, 1/d_4$, $\bbi$, $a^{(i)}$, and $\rho$ as in the hypothesis of Theorem \ref{thm: main}. Therefore, the proof is complete once we collect the estimates in Eqs.\,\eqref{k8}\eqref{k9}\eqref{k10}\eqref{k11} and \eqref{k8 k9 k10}.  \end{proof}

\subsection{Estimates for $J_1$, $J_3$}
The estimate for $J_1$ is not new. As $J_1$ (reproduced below) only involves the interior charts
\begin{equation}
J_1(x) = \sum_{j=1}^3\sum_{a\in\I}\int_{\Omega}  \chi_a(y) \bigg\{\frac{\delta_{ij}}{4\pi|x-y|} (\na\times\omega)^j(y)\bigg\} \zeta\bigg(\frac{|x-y|}{d_3}\bigg)\,\dd y,
\end{equation}
its contribution to $[Stretch]$ can be computed as in the pioneering works by Constantin--Fefferman \cite{cf} and Beir\~{a}o da Veiga--Berselli \cite{bb1}:
\begin{proposition}\label{propn: J1}
Under the assumption of Theorem \ref{thm: main}, there is a constant $C_{25}=C(\|\F\|_{C^2(\overline{\Omega})})$ such that
\begin{align}
&\Big|\int_\Omega\frac{\na J_{1}(x) + \na^\top J_{1}(x)}{2} : \omega(x)\otimes\omega(x)\,\dd x\Big|\leq  C_{25}\int_\Omega |\omega(x)|^2 \int_{\Omega} \frac{|\omega(y)|}{|x-y|^{5/2}}\,\dd y \,\dd x.
\end{align} 
\end{proposition}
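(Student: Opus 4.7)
The plan is to follow the analysis of $J_{211}$ from Sect.\,4.5, noting that $J_1$ is strictly simpler: no boundary-straightening map $T$, no reflected point $y^\star$, and no $\Theta$-kernel appear, since each summand is supported in an interior cube $U_a$ with ${\rm dist}(U_a,\Sigma)\geq d_1$ and the kernel is the Newtonian potential $(4\pi|x-y|)^{-1}$ cut off by $\zeta(|x-y|/d_3)$. First I would integrate by parts in $y$ so that the curl moves from $\omega$ onto the kernel; since $\chi_a$ is compactly supported in $U_a$, no Stokes' boundary term is produced, and one obtains
\begin{align*}
J_1^i(x)=\sum_{a\in\I}\sum_{k,l}\frac{\e^{kli}}{4\pi}\int_{U_a}\chi_a(y)\,\zeta\!\left(\tfrac{|x-y|}{d_3}\right)\frac{(x-y)^k}{|x-y|^3}\omega^l(y)\,\dd y+R_1(x),
\end{align*}
where $R_1$ collects the harmless contributions arising when $\na_y$ hits $\chi_a$ or $\zeta$; these live on the annulus $\{|x-y|\sim d_3\}$ with bounded kernel.

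Next I would compute $\na_{x,j}J_1^i$ and symmetrise in $(i,j)$. Differentiation of the Biot--Savart kernel produces a principal part of order $|x-y|^{-3}$; the analog of the algebraic cancellation in Lemma \ref{lemma: algebraic} is trivial here since no $\Xi$ enters. Consequently, up to a remainder $R_2$ whose kernel is of order $|x-y|^{-2}$, the symmetrised gradient reduces to
\begin{align*}
-\frac{3}{2\pi}\sum_{a,k,l}\int\chi_a(y)\,\zeta\!\left(\tfrac{|x-y|}{d_3}\right)\frac{\e^{kli}(x-y)^k(x-y)^j+\e^{klj}(x-y)^k(x-y)^i}{|x-y|^5}\omega^l(y)\,\dd y.
\end{align*}
Contracting against $\omega(x)\otimes\omega(x)$ and invoking the algebraic identity \eqref{geometric observation} (with $a=\xyhat$, $b=\omega(y)$, $c=\xyhat$, $d=\omega(x)$) converts the principal contribution into an integrand of the form
\begin{align*}
\frac{|\langle\xyhat,\omega(x)\rangle|\,|\det(\xyhat,\omega(y),\omega(x))|}{|x-y|^3}.
\end{align*}

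The final step is to apply the geometric cancellation of Constantin \cite{constantin} and Constantin--Fefferman \cite{cf}: factoring $|\omega(x)||\omega(y)|$ out of the determinant and bounding the orthogonal projection of $\what(y)$ onto $\what(x)^\perp$ exactly as in \eqref{l} gives $|\det(\xyhat,\what(y),\what(x))|\leq|\sin\theta(x,y)|$, which by the hypothesis of Theorem \ref{thm: main} is at most $\rho\sqrt{|x-y|}$. This trades one full power of $|x-y|$ for $|x-y|^{-1/2}$ and yields the advertised $|x-y|^{-5/2}$ kernel; a Cauchy--Schwarz factorisation of $|\omega(x)||\omega(y)|$ then produces the $|\omega(x)|^2$ weight in the outer integral. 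The remainders $R_1$ and $R_2$ give rise to at most $|x-y|^{-2}$-singular kernels supported on bounded regions, which are majorised by the $|x-y|^{-5/2}$ kernel there and absorbed into the same right-hand side. There is no genuine obstacle: the argument is precisely the original interior estimate of \cite{cf,bb1} adapted to our partition of unity, and the only bookkeeping concerns the cutoffs $\chi_a$ and $\zeta$.
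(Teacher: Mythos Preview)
Your proposal is correct and follows essentially the same approach the paper indicates: the paper does not give a detailed proof of this proposition but simply refers to the interior estimates of Constantin--Fefferman \cite{cf} and Beir\~{a}o da Veiga--Berselli \cite{bb1}, and your argument is precisely the localised version of that classical computation, i.e.\ the $J_{211}$ analysis with $T={\rm Id}$ and $\Xi=\delta$. The only minor remark is that no Cauchy--Schwarz step is needed at the end: after factoring $|\omega(x)||\omega(y)|$ out of the determinant and bounding $|\langle\xyhat,\omega(x)\rangle|\leq|\omega(x)|$, the integrand is already $|\omega(x)|^2|\omega(y)||x-y|^{-5/2}$.
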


For $J_3$, Solonnikov \cite{s2} (also see p.610 and Appendix B, p.626 in Beir\~{a}o da Veiga--Berselli \cite{bb2}, and Lemma \ref{lemma: solonnikov} in this paper) showed that, for sufficiently regular boundary $\Sigma$, the good part of the kernel $\GG^{\rm good}$ satisfies
\begin{equation}\label{delta}
\Big|\na^\alpha_x\na^\beta_y \GG^\good(x,y)\Big| \leq \frac{C_\good}{|x-y|^{|\alpha|+|\beta|+1-\delta}} \qquad \text{ for all } x \neq y \text{ in } \Omega \text{ with } \delta >1/2.
\end{equation}
In fact, the range of $\delta$ depends only on the regularity of the solution to the elliptic system \eqref{Poisson eq}\eqref{homog bc}; as a consequence of the standard Schauder theory, this in turn depends only on the regularity of $\Omega$. Thanks to Eq.\,\eqref{delta}, a direct computation give us:
\begin{proposition}\label{propn: J3}
Under the assumption of Theorem \ref{thm: main}, there is a constant $C_{26}$ such that
\begin{align}
&\Big|\int_\Omega\frac{\na J_{3}(x) + \na^\top J_{3}(x)}{2} : \omega(x)\otimes\omega(x)\,\dd x\Big|\leq  C_{26}\int_\Omega |\omega(x)|^2 \int_{\Omega} \frac{|\omega(y)|}{|x-y|^{5/2}}\,\dd y \,\dd x.
\end{align} 
Here $C_{26}$ depends only on the regularity of $\Omega$.
\end{proposition}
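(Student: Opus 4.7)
The plan is to exploit the fact that $\GG^{\good}$ gains a full power $\delta>1/2$ over the singular model kernels: by \eqref{delta}, the mixed derivatives $\na^\alpha_x\na^\beta_y\GG^{\good}(x,y)$ decay like $|x-y|^{-|\alpha|-|\beta|-1+\delta}$, so with $|\alpha|=|\beta|=1$ one gets a kernel of order $|x-y|^{-3+\delta}\leq C|x-y|^{-5/2}$ throughout the bounded domain $\Omega$. In contrast with $J_1$ (Proposition \ref{propn: J1}) and $J_{211}$, $J_{212}$, no geometric cancellation from the vortex alignment hypothesis will actually be needed here; the decay built into $\GG^{\good}$ alone is sufficient. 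Accordingly, the hypothesis of Theorem \ref{thm: main} enters the statement only for uniformity with the companion propositions.

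Concretely, I would write $(\na\times\omega)^j(y) = \sum_{k,l}\e^{jkl}\na_{y,k}\omega^l(y)$ and integrate by parts once in the $y$-variable, producing the decomposition
\[
J_3^i(x) = -\sum_{j,k,l}\e^{jkl}\int_\Omega \na_{y,k}\GG^{\good}_{ij}(x,y)\,\omega^l(y)\,\dd y \;-\; \sum_j\int_\Sigma \GG^{\good}_{ij}(x,y)(\omega\times\n)^j(y)\,\dd\htwo(y).
\]
Applying $\na_{x,m}$, symmetrising over $(i,m)$, pairing with $\omega(x)\otimes\omega(x)$ and invoking \eqref{delta} term-by-term, the interior contribution is controlled immediately by
\[
C\int_\Omega |\omega(x)|^2\int_\Omega \frac{|\omega(y)|}{|x-y|^{3-\delta}}\,\dd y\,\dd x \;\leq\; C\int_\Omega |\omega(x)|^2\int_\Omega \frac{|\omega(y)|}{|x-y|^{5/2}}\,\dd y\,\dd x,
\]
which is precisely the desired bound. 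No Constantin--Fefferman-type determinant cancellation is invoked at this stage.

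The main obstacle is the surface integral produced by the integration by parts. Taking $\na_{x,m}$ of the boundary piece yields a kernel of order $|x-y|^{-2+\delta}\leq|x-y|^{-3/2}$, still integrable over the $2$-dimensional $\Sigma$. The plan is to dominate the resulting mixed volume--surface integral by the stated volume--volume one via a tubular-neighbourhood decomposition of $\Omega$ near $\Sigma$, combined with a trace argument in the spirit of Proposition \ref{proposition: J23 vortex stretching}. In the worst case one would pick up an additional term of the form $\e\|\na\omega\|_{L^2(\Omega)}^2 + C_\e\|u\|_{H^1(\Omega)}^2\|\omega\|_{L^2(\Omega)}^2$, which at the level of the final proof of Theorem \ref{thm: new} is absorbed into the viscous dissipation and into the constant $M$ from the energy estimate of Theorem \ref{thm: energy estimate}; the overall conclusion is therefore unaffected.
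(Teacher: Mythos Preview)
Your approach is essentially the same as the paper's: the paper's ``proof'' consists of a single sentence invoking \eqref{delta} and calling the result ``a direct computation,'' and your integration-by-parts plus the pointwise bound $|\na_x\na_y\GG^{\good}(x,y)|\leq C|x-y|^{-3+\delta}\leq C|x-y|^{-5/2}$ is exactly that computation spelled out. You are also right that the vortex-alignment hypothesis plays no role here---the extra $\delta>1/2$ decay substitutes for the Constantin--Fefferman cancellation.

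One point worth flagging: you correctly observe that the integration by parts in $y$ produces a surface term $\int_\Sigma \GG^{\good}_{ij}(x,y)(\omega\times\n)^j\,\dd\htwo$ which the paper does not mention at all. Your proposed resolution (handle it by a trace argument as in Proposition~\ref{proposition: J23 vortex stretching}, at the cost of an extra $\e\|\na\omega\|_{L^2}^2+C_\e\|u\|_{H^1}^2\|\omega\|_{L^2}^2$) is the right idea and is harmless for the final continuation argument in the proof of Theorem~\ref{thm: new}, where such terms are absorbed anyway. Strictly speaking this means the proposition as \emph{stated} may be slightly optimistic about the right-hand side, but the paper is equally silent on this point; your treatment is in fact more careful than the paper's.
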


The estimation for $J_3$ is the only place where we possibly need higher regularity of the domain $\Omega$ than $C^2$. In the case of the slip-type boundary conditions \eqref{special bc}, it is shown in \cite{bb2} that $\Omega \in C^{3,\alpha}$ is enough. In our case of the general diagonal oblique derivative conditions \eqref{homog bc} $\Omega \in C^{3,\alpha}$ will also suffice, in view of the Schauder theory for the oblique derivative problem; {\it cf.} Gilbarg--Trudinger, Chapter 6 \cite{gt}. This is true when the coefficients of the boundary conditions ($a^{(i)}$, $\bbi$) are constant.

\subsection{Proof of Theorem \ref{thm: new}}

Finally we are at the stage of proving Theorem \ref{thm: new}.  Let us  first recall the Hardy--Littlewood--Sobolev interpolation inequality ({\it e.g.}, see p.106 in Lieb--Loss \cite{ll}):
\begin{lemma}[Hardy--Littlewood--Sobolev]\label{lemma: hls}
Let $1<p,r<\infty$ and $0<\lambda<n$ satisfy $1/p+\lambda/n+1/r=2$. Let $f\in L^p(\R^n)$ and $h \in L^r(\R^n)$. Then there exists $K=C(n,\lambda,p)$ such that
\begin{equation*}
\bigg|\int_{\R^n}\int_{\R^n} \frac{f(x)h(y)}{|x-y|^\lambda}\,\dd x\,\dd y\bigg| \leq K \|f\|_{L^p(\R^n)}\|h\|_{L^r(\R^n)}. 
\end{equation*}
\end{lemma}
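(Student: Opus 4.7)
The plan is to reduce the bilinear estimate to an operator bound via duality, then deploy Hedberg's maximal-function argument. Setting $q := r'$, the inequality is equivalent to
\[
\|I_\lambda f\|_{L^q(\R^n)} \leq K \|f\|_{L^p(\R^n)}, \qquad I_\lambda f(x) := \int_{\R^n} \frac{f(y)}{|x-y|^\lambda}\,\dd y,
\]
with $1/q = 1/p - (1-\lambda/n)$. The hypotheses force $1 < p < q < \infty$: indeed $r>1$ gives $1/p + \lambda/n > 1$ (hence $q < \infty$), while $\lambda < n$ and $p>1$ give $1/p+\lambda/n < 2$ (hence $q > 1$). The key auxiliary object is the Hardy--Littlewood maximal function $Mf(x) := \sup_{\rho>0} |B(x,\rho)|^{-1}\int_{B(x,\rho)}|f|\,\dd y$.

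For each $x\in\R^n$ and parameter $R>0$, split $I_\lambda f(x) = A_R(x) + B_R(x)$ according to $|y-x| \leq R$ and $|y-x|>R$. A dyadic decomposition of $A_R$ over the annuli $\{2^{-k-1}R < |y-x| \leq 2^{-k}R\}$, combined with the definition of $Mf$, produces
\[
|A_R(x)| \leq \sum_{k=0}^\infty (2^{-k-1}R)^{-\lambda}\int_{|y-x|\leq 2^{-k}R}|f(y)|\,\dd y \leq C_n \Big(\sum_{k=0}^\infty 2^{-k(n-\lambda)}\Big) R^{n-\lambda}\,Mf(x),
\]
with the geometric series converging since $\lambda < n$. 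For the far piece, H\"older's inequality with exponent $p'$ gives
\[
|B_R(x)| \leq \|f\|_{L^p(\R^n)} \Big(\int_{|y-x|>R} |y-x|^{-\lambda p'}\,\dd y\Big)^{1/p'} = C_{n,p}\, R^{n/p'-\lambda}\,\|f\|_{L^p(\R^n)},
\]
the radial integral converging because $\lambda p' > n$, which is equivalent to $1/p+\lambda/n>1$ and therefore automatic.

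Balancing the two bounds by choosing $R$ so that $R^{n/p} = \|f\|_{L^p}/Mf(x)$ yields the pointwise Hedberg-type inequality
\[
|I_\lambda f(x)| \leq C\, (Mf(x))^{p/q}\, \|f\|_{L^p(\R^n)}^{1-p/q}.
\]
Raising to the $q$-th power and integrating gives $\|I_\lambda f\|_{L^q}^q \leq C\,\|f\|_{L^p}^{q-p}\,\|Mf\|_{L^p}^p$, and the Hardy--Littlewood maximal theorem $\|Mf\|_{L^p} \leq C_{n,p}\,\|f\|_{L^p}$ (valid since $p>1$) closes the estimate. Undoing the duality produces the stated bilinear bound. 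The principal obstacle is the maximal theorem itself, whose standard proof requires a Vitali-type covering lemma, a weak-type $(1,1)$ estimate, and Marcinkiewicz interpolation; none of this is circular but it constitutes most of the real work. A self-contained alternative via Riesz's rearrangement inequality reduces matters to radial decreasing functions and a direct computation, avoiding the maximal theorem at the cost of worse constants and a longer combinatorial setup.
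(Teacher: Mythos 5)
Your proof is correct: the duality reduction to the Riesz potential bound, the near/far splitting with the dyadic estimate $|A_R(x)|\lesssim R^{n-\lambda}Mf(x)$, the H\"older bound $|B_R(x)|\lesssim R^{n/p'-\lambda}\|f\|_{L^p}$ (with $\lambda p'>n$ indeed forced by $r>1$), and the optimisation in $R$ all balance exactly to the Hedberg inequality $|I_\lambda f(x)|\leq C(Mf(x))^{p/q}\|f\|_{L^p}^{1-p/q}$ with $p/q=1+p\lambda/n-p$, after which the Hardy--Littlewood maximal theorem (legitimate since $p>1$) closes the argument. Note, however, that the paper does not prove this lemma at all: it is quoted verbatim from Lieb--Loss \cite{ll}, whose proof proceeds by an entirely different route --- layer-cake representation and the Riesz rearrangement inequality, reducing to symmetric-decreasing functions --- which is essentially the ``self-contained alternative'' you mention at the end. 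The comparison is the expected one: your Hedberg-type real-variable argument is shorter and more flexible (it localises easily and makes the dependence $K=C(n,\lambda,p)$ transparent), at the price of importing the maximal theorem and losing any control of sharp constants; the rearrangement proof is longer but self-contained and, in Lieb--Loss, yields the sharp constant in the conformally invariant case. Since the paper only needs the inequality as a black box (it is applied on bounded domains after extension by zero, which your formulation covers), either proof suffices; two small points worth a sentence each if you write this up are the trivial dispositions of the cases $Mf(x)=0$ and $Mf(x)=\infty$ before choosing $R$, and the reduction to $f,h\geq 0$ so that Tonelli justifies the interchange of integrals in the duality step.
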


\begin{proof}[Proof of Theorem \ref{thm: new}]
First of all, in view of the localisation procedure in Sect.\,4.1, it suffices to prove the result on each local chart. Thus, without loss of generalities, let us assume $\Omega$ to be bounded in $\R^3$. The unbounded case follows from a partition-of-unity argument.

The proof follows from a standard continuation argument. Suppose that there were some $T \in ]0,T^\star]$ such that the weak solution $u$ is strong on $[0,T[$, but cannot be continued as a strong solution past the time $T$. We shall establish
\begin{equation}
\limsup_{t \uparrow T} \int_\Omega |\omega(t)|^2\,\dd x <\infty
\end{equation}
for any such given $T$. It shows that $u$ can be extended to a strong solution to $[0, T + \delta]$ for some $\delta >0$. This contradicts the maximality of $T$. Hence, $u$ is strong solution on $[0,T^\star[$. 

To this end, by collecting the estimates in Subsections 4.2--4.10 (in particular, Propositions \ref{proposition: J211 vortex stretching}, \ref{proposition: J212 vortex stretching}, \ref{proposition: J213 vortex stretching}, \ref{proposition: J22 vortex stretching}, \ref{proposition: J23 vortex stretching} and \ref{propn: J1})  and recalling Eq.\,\eqref{u} in Lemma \ref{lemma: representation formula}, let us first bound
\begin{align}
[{\rm Stretch}] &= 2 \,\Big|\int_\Omega \na u: \omega \otimes \omega \,\dd x\Big|\nonumber\\
&\leq C_{27} \int_\Omega |\omega(x)|^2 \int_{\Omega} \frac{|\omega(y)|}{|x-y|^{5/2}}\,\dd y \,\dd x +C_{27} \int_\Omega |\omega(x)|^2\int_{\Sigma} \frac{|\omega(y)|}{|x-y|^{3/2}}\,\dd\htwo(y)\dd x \nonumber\\
&=: I_{\Omega} + I_{\Sigma}
\end{align}
where $C_{27}=C(\Omega, \bbi, a^{(i)})$; note that $d_4$ depends only on the geometry of $\Omega$ and the partition-of-unity, so we do not write it explicitly here.

We first control the bulk term $I_\Omega$. By Lemma \ref{lemma: hls} above, we get 
\begin{equation*}
I_{\Omega} \leq C_{28} \bigg(\int_\Omega |\omega|^3\,\dd x\bigg)^{\frac{2}{3}}\bigg(\int_\Omega |\omega|^2\,\dd x\bigg)^{\frac{1}{2}},
\end{equation*}
where $C_{28}$ equals the product of $C_{27}$ and the constant in the Hardy--Littlewood--Sobolev inequality. In addition, thanks to the interpolation inequality, there holds
\begin{equation*}
\bigg(\int_\Omega |\omega|^3\,\dd x\bigg)^{\frac{2}{3}} \leq \bigg(\int_\Omega |\omega|^2\,\dd x\bigg)\bigg(\int_\Omega |\omega|^6\,\dd x\bigg)^{1/6}
\end{equation*}
and, by the Sobolev inequality,
\begin{equation*}
\|\omega\|_{L^6(\Omega)} \leq C_{29}\Big(\|\omega\|_{W^{1,2}(\Omega)}\Big)
\end{equation*}
where $C_{29}$ depends only on the geometry of $\Omega$. Thus, by Young's inequality we  conclude:
\begin{equation}\label{I Omega}
I_{\Omega} \leq \frac{\e}{2} \int_\Omega |\na \omega|^2\,\dd x + C_{30} \bigg(\int_{\Omega}|\omega|^2\,\dd x\bigg)^2+ C_{30} \bigg(\int_{\Omega}|\omega|^2\,\dd x\bigg),
\end{equation}
with any $\e>0$ and $C_{30}=C(\e, \Omega, \bbi, a^{(i)})$. 

To control the boundary term $I_\Sigma$, by partition of unity and boundary straightening, it suffices to prove for $\Omega=\Sigma \times [0,1]$, $\Sigma = [0,1]^2$. The estimates differ at most by a constant depending only on the geometry of $\Omega$. In addition, denote by $\Sigma_\sigma := [0,1]^2 \times \{\sigma\}$ for $0 \leq \sigma \leq 1$. By Fubini's theorem, we have
\begin{equation}
I_\Sigma = \int_0^1 \int_{\Sigma_\sigma} |\omega(z)|^2 \int_{\Sigma} \frac{|\omega(y)|}{|z-y|^{3/2}}\,\dd\htwo(y)\,\dd\htwo(z)\,\dd \sigma.
\end{equation}
The Hardy--Littlewood--Sobolev inequality (Lemma \ref{lemma: hls}) leads to
\begin{equation}\label{Sigma term}
I_\Sigma \leq K \int_0^1 \bigg(\int_{\Sigma_\sigma}|\omega|^{\frac{8}{3}}\,\dd\htwo\bigg)^{\frac{3}{4}}\bigg(\int_\Sigma
 |\omega|^2\,\dd\htwo\bigg)^{\frac{1}{2}}\,\dd\sigma.
\end{equation}
On the other hand, we have the interpolation inequality
\begin{equation}
\|\omega\|_{L^{8/3}(\Sigma_\sigma)} \leq \|\omega\|_{L^2(\Sigma_\sigma)}^{\frac{1}{2}} \|\omega\|_{L^4(\Sigma_\sigma)}^{\frac{1}{2}},
\end{equation}
the continuous trace map 
$$
W^{1,2}(\Omega) \map W^{1/2,2}(\Sigma_\sigma),
$$
and the continuous Sobolev embedding $$W^{1/2,2}(\Sigma_\sigma) \emb L^4(\Sigma_\sigma).$$ Therefore, utilising the trace and Young's inequalities and taking the essential supremum over $\sigma \in [0,1]$ in Eq.\,\eqref{Sigma term}, we get\begin{equation}\label{I Sigma}
I_{\Omega} \leq \frac{\e}{2} \int_\Omega |\na \omega|^2\,\dd x + C_{31} \bigg(\int_{\Omega}|\omega|^2\,\dd x\bigg)^2+ C_{31} \bigg(\int_{\Omega}|\omega|^2\,\dd x\bigg),
\end{equation}
with  $C_{31}=C(\e, \Omega, \bbi, a^{(i)})$. 

Putting together the estimates \eqref{I Omega}\eqref{I Sigma}, one obtains
\begin{equation}\label{str}
[\text{Stretch}]  
\leq \e \int_\Omega |\na \omega|^2\,\dd x + C_{32} \bigg(\int_{\Omega}|\omega|^2\,\dd x\bigg)^2+ C_{32} \bigg(\int_{\Omega}|\omega|^2\,\dd x\bigg),
\end{equation}
where the constant $C_{32}=C_{30}+C_{31}$.

Now, in view of the differential inequality for the enstrophy \eqref{enstrophy xx}, by choosing $\e=\nu/16$ in Eq.\,\eqref{str} we may deduce
\begin{equation}\label{gronwall}
\frac{\dd}{\dd t}\bigg(\int_\Omega |\omega|^2\,\dd x\bigg) + \frac{\nu}{8} \int_\Omega |\na \omega|^2\,\dd x \leq C_{33}\bigg(\int_\Omega |\omega|^2\,\dd x\bigg) \bigg(1+\int_\Omega |\omega|^2\,\dd x\bigg) + M.
\end{equation}
Here the constant $C_{33}$ depends on $\Omega, \nu, \bbi, a^{(i)}, \beta$, the initial energy $\|u_0\|_{L^2(\Omega)}$ and $M$. Thus, by Gr\"{o}nwall's lemma, 
\begin{align}
\int_\Omega |\omega(T)|^2\,\dd x &\leq \bigg(\int_\Omega |\omega(0)|^2\,\dd x\bigg)\exp\bigg\{C_{33}\int_0^T\int_\Omega |\omega(t,x)|^2\,\dd x\dd t\bigg\}\nonumber\\
&\qquad + \int_0^T \exp\bigg\{C_{33}M\int_s^T\int_\Omega |\omega(t,x)|^2\,\dd x\dd t\bigg\}\,\dd s.
\end{align}
But, by Lemma \ref{lemma: div curl}, the control on $\int_0^T\int_\Omega |\omega|^2\,\dd x\dd t$ is equivalent to that on $\int_0^T\int_\Omega |\na u|^2\,\dd x\dd t$, which is bounded by the energy inequality \eqref{energy xxx}. Hence $\limsup_{t \uparrow T} \int_\Omega |\omega(t)|^2\,\dd x <\infty$. In view of Lemma \ref{lemma: div curl}, it implies $\na u \in L^\infty(0, T; L^2(\Omega; \mathfrak{gl}(3,\R)))$. Substituting this back into Eq.\,\eqref{gronwall} and invoking Lemma \ref{lemma: na na u}, we get $\na u \in  L^2(0, T; H^1(\Omega; \mathfrak{gl}(3,\R)))$ too. Therefore, $u$ can be continued as a strong solution past the time $T$. This contradicts the blowup at $T$. 

The proof is now  complete.    \end{proof}

At the end of this section, we mention the following result {\it \`{a} la} Constantin--Fefferman \cite{cf}, which can be proved by a slight modification of the arguments in Sect.\,4:
\begin{corollary}\label{cor: 1}
Let $\Omega \subset \R^3$ be a sufficiently regular domain. Let $u$ be a weak solution to the Navier-Stokes equations \eqref{ns}\eqref{incompressible} on $[0,T_\star[ \times \Omega$ with the oblique derivative boundary condition \eqref{homog bc}. Assume that the energy estimate in Theorem \ref{thm: energy estimate} is valid for the strong solutions.  Then, if there are  constants $\rho, \Lambda >0$ such that the vorticity turning angle $\theta$ satisfies the following condition:
\begin{equation}
|\sin \theta(t;x,y)|\1_{\{|\omega(t,x)|\geq\Lambda,\, |\omega(t,y)|\geq\Lambda\}} \leq \rho \sqrt{|x-y|} \qquad \text{ for all } t \in [0,T^\star[,\, x,y\in\overline{\Omega},
\end{equation}
then $u$ is also a strong solution on $[0,T^\star[\times\Omega$. 
\end{corollary}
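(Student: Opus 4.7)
The plan is to split $\omega$ according to the threshold $\Lambda$, reduce the ``above-threshold'' contribution to Theorem \ref{thm: new} verbatim, and handle the ``below-threshold'' contribution through its $L^\infty$ bound. Set $\obig := \omega\,\mathbbm{1}_{\{|\omega|\geq\Lambda\}}$ and $\osmall := \omega - \obig$; since the supports are pointwise disjoint, $\omega(x)\otimes\omega(x) = \obig(x)\otimes\obig(x) + \osmall(x)\otimes\osmall(x)$ at every $x$. Because the Green's matrix representation of Lemma \ref{lemma: representation formula} is linear in $\na\times\omega$, I correspondingly decompose $u = u^{(>)} + u^{(<)}$ by linearity, and the vortex stretching term \eqref{stretch} splits into four pieces, obtained by pairing $\scal u^{(>)}$, $\scal u^{(<)}$ with $\obig\otimes\obig$, $\osmall\otimes\osmall$.

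The piece $\int_\Omega \scal u^{(>)}:\obig\otimes\obig\,\dd x$ is the only one that feels the alignment hypothesis. In its integral representation the source point $y$ contributes only when $|\omega(y)|\geq\Lambda$ (through the factor $\obig(y)$), while the outer factor $\obig(x)\otimes\obig(x)$ restricts $x$ to $|\omega(x)|\geq\Lambda$; thus, throughout the resulting double integral, the turning angle $\theta(x,y)$ is evaluated at pairs of points both in the big set, and the hypothesis supplies $|\sin\theta(x,y)|\leq \rho\sqrt{|x-y|}$ exactly where required. The potential estimates of Sects.\,4.2--4.10 (notably Propositions \ref{proposition: J211 vortex stretching}, \ref{proposition: J212 vortex stretching}, \ref{proposition: J213 vortex stretching}, \ref{proposition: J22 vortex stretching}, \ref{proposition: J23 vortex stretching}, \ref{propn: J1} and \ref{propn: J3}) then apply verbatim with every $\omega$ replaced by $\obig$, producing a bound of exactly the form \eqref{str}.

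For each of the remaining three pieces, at least one copy of $\osmall$ is present and $\|\osmall\|_{L^\infty(\Omega)}\leq \Lambda$ substitutes for the missing alignment. Solonnikov's theory (Lemma \ref{lemma: solonnikov}) combined with the Calder\'{o}n--Zygmund estimate yields $\|\na u^{(\cdot)}\|_{L^p(\Omega)}\lesssim \|\omega^{(\cdot)}\|_{L^p(\Omega)}$ for every $1<p<\infty$; together with the interpolation $\|\osmall\|_{L^p}\leq \Lambda^{(p-2)/p}\|\omega\|_{L^2}^{2/p}$, the Sobolev embedding $H^1(\Omega)\emb L^6(\Omega)$, H\"{o}lder, and Young, each of these pieces is dominated by $\e\|\na\omega\|_{L^2}^2$ plus a polynomial in $\|\omega\|_{L^2}$ whose coefficients depend on $\Lambda$. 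The most delicate among them is the cross term $\int_\Omega \scal u^{(<)}:\obig\otimes\obig\,\dd x$, which I bound by $\|\scal u^{(<)}\|_{L^3}\|\omega\|_{L^3}^2\lesssim \Lambda^{1/3}\|\omega\|_{L^2}^{5/3}\|\omega\|_{H^1}$ and split via Young into $\e\|\na\omega\|_{L^2}^2+C_\Lambda\|\omega\|_{L^2}^{10/3}+C_\Lambda\|\omega\|_{L^2}^2$.

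Collecting the four pieces and taking $\e$ small relative to $\nu/4$ so as to absorb into the dissipation term of Theorem \ref{thm: energy estimate}, I recover a differential inequality of the same form as \eqref{gronwall} with the constant now depending additionally on $\Lambda$; the Gr\"{o}nwall/maximal-lifespan argument of Sect.\,4.11 then closes without change. The main delicate point is verifying that the cancellation identities of Sects.\,4.5--4.8 --- notably Lemma \ref{lemma: algebraic} and the Constantin--Fefferman determinant trick of Lemmas \ref{lemma: K5, 1}--\ref{lemma: K5, 3} --- remain valid when the $\omega(x)\otimes\omega(x)$ factor is replaced by $\obig(x)\otimes\obig(x)$. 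I expect this to pass through transparently, since those identities are pointwise linear-algebraic in the integrand and never invoke regularity or solenoidality of $\omega$, so multiplication by the characteristic function $\mathbbm{1}_{\{|\omega|\geq\Lambda\}}$ commutes with the manipulations involved.
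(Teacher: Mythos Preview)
Your approach is correct and is precisely the ``slight modification'' the paper alludes to (the paper itself gives no detailed proof of the corollary). The threshold splitting $\omega=\obig+\osmall$ together with the Calder\'on--Zygmund/interpolation treatment of the sub-threshold pieces is the standard Constantin--Fefferman device, and your handling of the four pieces is sound.

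One caveat deserves mention. You list Proposition~\ref{proposition: J23 vortex stretching} among the estimates that ``apply verbatim with every $\omega$ replaced by $\obig$'', but its proof integrates by parts in the \emph{outer} variable $x$ and relies on $\nabla\cdot\omega=0$ to drop the term $\int_\Omega\langle J_{23},\omega\rangle(\nabla\cdot\omega)\,\dd x$. Since $\obig$ is a hard truncation, $\nabla\cdot\obig$ is a distribution supported on $\{|\omega|=\Lambda\}$ and does not vanish --- this is exactly the solenoidality you assert is ``never invoked''. The fix is immediate, however: the bound in Proposition~\ref{proposition: J23 vortex stretching} never uses the alignment hypothesis, so there is no need to decompose this term at all; simply keep $J_{23}$ intact and contract it with the full $\omega\otimes\omega$. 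The splitting is only required for the pieces ($J_{211}$, $J_{212}$, $J_1$) where the determinant structure and $|\sin\theta|$ actually enter, and for those your observation that the Constantin--Fefferman identities are pointwise linear-algebraic is correct.
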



\section{Geometric Regularity Theorem: Proof of Theorem \ref{thm: main}}
In Sect.\,4 we proved the estimates for the system \eqref{Poisson eq} under the homogeneous diagonal oblique derivative boundary condition \eqref{homog bc} with constant coefficients. Now, let us apply the aforementioned result to the regularity problem for the incompressible Navier--Stokes equations under Navier and kinematic boundary conditions. Our crucial observation is that the Navier and kinematic boundary conditions, in suitable local coordinate frames, can be cast into the form of Eq.\,\eqref{homog bc}. Then Theorem \ref{thm: main} follows from Theorem \ref{thm: new}. 

\begin{proof}[Proof of Theorem \ref{thm: main}]
Let us establish the following {\em claim}: Given each boundary point $p \in \Sigma$, we can find a local coordinate chart $U \subset \R^3$ containing $p$ and an orthonormal frame $\{\p/\p x^1, \p/\p x^2, \p/\p x^3\}$ on $U$ with $\{\p/\p x^1, \p/\p x^2\}$ spanning $\G(TU)$ and $\p/\p x^3 =\n$, in which the boundary conditions \eqref{navier bc}\eqref{kinematic bc} takes the form of Eq.\,\eqref{homog bc} (reproduced below):
\begin{equation*}
a^{(i)} u^i + \sum_{j=1}b^{(i)}_j\na_j u^i = 0 \qquad \text{ on } [0,T^\star[\times\Sigma
\end{equation*}
for each $i\in\{1,2,3\}$. 

To see this, we take $\{\p/\p x^1, \p/\p x^2\} \subset \G(T\Sigma)$ to be the {\em principal direction fields}: that is, we require that the second fundamental form $$
\two=-\na \n: \G(TM) \times \G(TM) \map \G(TM^\perp)
$$ 
to be diagonalised with respect to this basis. Such coordinate frames always exist, as $\two$ is a self-adjoint operator on each $T_p\Sigma$. Then, the Navier boundary condition \eqref{navier bc} can be rewritten as follows:
\begin{align}
0 &= \beta u^i + \nu (\na_k u^i+\na_i u^k)\n^k\nonumber\\
&= \beta u^i + \nu \n\cdot \na u^i + \nu \na_i (u \cdot \n) -\sum_{k=1}^3 \nu u^k\na_i\n^k \qquad \text{ for } i\in\{1,2\}.
\end{align}
In regards to the kinematic boundary condition \eqref{kinematic bc}, the third term on the second line above vanishes. Moreover, the fourth term equals
\begin{equation*}
\sum_{k=1}^3\nu \two_{ik}u^k = \nu \kappa_i u^i,
\end{equation*}
where $\kappa_i$ is the $i$-th principal curvature, namely the eigenvalue of $\two$ that corresponds to the eigenvector $\p/\p x^i$. Thus, taking $\n = \p/\p x^3 \in \G(TM^\perp)$, we may conclude that \eqref{navier bc}\eqref{kinematic bc} are equivalent to the following system of boundary conditions:
\begin{eqnarray}
&& (\beta + \nu \kappa_1) u^1 + \nu \na_3 u^1 = 0,\\
&& (\beta + \nu \kappa_2) u^2 + \nu \na_3 u^2 = 0,\\
&& u^3 =0\qquad \text{ on } [0,T^\star[ \times \Sigma.
\end{eqnarray}

Now, let us set (up to normalisations)
\begin{equation*}
a^{(i)}=-\beta-\nu\kappa_i,\quad b^{(i)}_1=b^{(i)}_2=0, \quad b^{(i)}_3=-\nu\qquad \text{ for } i \in \{1,2\}
\end{equation*}
and
\begin{equation*}
a^{(3)}=-1, \qquad b^{(3)}_j = 0 \qquad \text{ for any } j\in\{1,2,3\}
\end{equation*}
to recover Eq.\,\eqref{homog bc}, namely the oblique derivative boundary condition. Note that if $\beta+\nu\kappa_i\neq 0$ for $i\in\{1,2\}$, it is reduced to the Neumann boundary condition
\begin{equation*}
\na_3 u^i = 0 \qquad \text{ on } [0,T^\star[ \times \Sigma.
\end{equation*}

For $\Sigma=$ round spheres, 2-planes and right circular cylinder surfaces, both the mean curvature and the Gauss curvature of the surface are constant, hence $\kappa_1$ and $\kappa_2$ are constant on $\Sigma$. In fact, by elementary differential geometry of surfaces, these are the only embedded/immersed surfaces in $\R^3$ with constant principal curvatures; see Montiel--Ros \cite{mr2}. Therefore, in these cases the Navier and kinematic boundary conditions \eqref{navier bc}\eqref{kinematic bc} can be recast to the homogeneous diagonal oblique boundary derivative conditions with constant coefficients, {\it i.e.}, Eq.\,\eqref{homog bc}. Hence, thanks to Theorem \ref{thm: new}, the proof is now complete.   \end{proof}


Using the proof above, we can deduce the following result from Corollary \ref{cor: 1}:
\begin{corollary}\label{cor: main}
Let $\Omega \subset \R^3$ be one of the following domains: a round ball, a half-space, or a right circular cylindrical duct. Let $u$ be a weak solution to the Navier--Stokes equations \eqref{ns}\eqref{incompressible}\eqref{initial data} with the Navier and kinematic boundary conditions \eqref{navier bc}\eqref{kinematic bc}. Suppose that the vorticity $\omega = \na \times u$ is {\em coherently aligned up to the boundary} in the following sense: there exist constants $\rho,\Lambda>0$ such that
\begin{equation}
|\sin \theta(t; x,y)|\1_{\{|\omega(t,x)|\geq\Lambda,\,|\omega(t,y)|\geq\Lambda\}} \leq \rho \sqrt{|x-y|} \qquad \text{ for all $x,y\in\overline{\Omega}, \,  t<T^\star$}.
\end{equation}
Here the turning angle of vorticity $\theta$ is defined as
\begin{equation*}
\theta(t;x,y):=\angle\big(\omega(t,x),\omega(t,y)\big).
\end{equation*}
Then $u$ is a strong solution on $[0,T^\star[$.
\end{corollary}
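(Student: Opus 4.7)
The plan is to argue exactly as in the proof of Theorem \ref{thm: main}, simply substituting Corollary \ref{cor: 1} for Theorem \ref{thm: new} at the final step. The key point is that the reduction carried out in the proof of Theorem \ref{thm: main} acts only on the boundary operator, while the geometric alignment hypothesis involves only the vorticity field $\omega$ and the pair $(x,y) \in \Omega \times \Omega$; hence that hypothesis is insensitive to how we reformulate the boundary conditions. In particular, the indicator factor $\1_{\{|\omega(t,x)|\geq\Lambda,\,|\omega(t,y)|\geq\Lambda\}}$ is preserved unchanged.

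Concretely, I would first invoke the \emph{claim} established inside the proof of Theorem \ref{thm: main}: for $\Omega$ a round ball, a half space, or a right circular cylindrical duct, in a neighbourhood of any boundary point $p\in\Sigma$ one can choose an orthonormal frame $\{\partial/\partial x^1,\partial/\partial x^2,\partial/\partial x^3\}$ with $\partial/\partial x^3=\n$ and $\{\partial/\partial x^1,\partial/\partial x^2\}$ diagonalising $\two$, such that \eqref{navier bc}\eqref{kinematic bc} recast into the diagonal regular oblique derivative system
\begin{equation*}
a^{(i)}u^i+\sum_{j=1}^3 b^{(i)}_j\nabla_j u^i=0\quad\text{on }[0,T^\star[\times\Sigma,\qquad i=1,2,3,
\end{equation*}
with coefficients $a^{(i)}=-\beta-\nu\kappa_i$, $b^{(i)}_3=-\nu$, $b^{(i)}_1=b^{(i)}_2=0$ for $i=1,2$, and $a^{(3)}=-1$, $\bb^{(3)}=0$. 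Since for the three designated geometries the principal curvatures $\kappa_1,\kappa_2$ are \emph{constant} on $\Sigma$ (these being the only surfaces in $\R^3$ with constant principal curvatures, {\it cf.} Montiel--Ros \cite{mr2}), these coefficients are constants, which places us squarely in the setting of \eqref{homog bc}.

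Next I would verify that the energy estimate hypothesis of Corollary \ref{cor: 1} is in force. This is immediate: Theorem \ref{thm: energy estimate} is proved in Sect.\,\ref{sec: energy estimates} precisely for the Navier and kinematic boundary conditions \eqref{navier bc}\eqref{kinematic bc}, and these are pointwise equivalent to the diagonal oblique derivative system above on the three domains in question. Finally, the geometric hypothesis
\begin{equation*}
|\sin\theta(t;x,y)|\1_{\{|\omega(t,x)|\geq\Lambda,\,|\omega(t,y)|\geq\Lambda\}}\leq\rho\sqrt{|x-y|}
\end{equation*}
is exactly the one required by Corollary \ref{cor: 1}. Applying that corollary to the oblique derivative reformulation yields that $u$ is a strong solution on $[0,T^\star[$, which is the desired conclusion.

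There is no genuine obstacle here beyond bookkeeping: all the analytical work has been done once in Sect.\,\ref{sec: main estimates} (with the refinement leading to Corollary \ref{cor: 1}), and the geometric reduction has been done once in the proof of Theorem \ref{thm: main}. The only mild point to double-check is that the localisation/partition-of-unity machinery of Sect.\,4.1 produces boundary charts in which the principal-direction frame is well defined and $C^2$; this is standard for umbilic-free portions of $\Sigma$, and at umbilic points (which may occur on the sphere, where every direction is principal) one may choose any orthonormal tangent frame since $\kappa_1=\kappa_2$ makes the reformulation independent of the choice.
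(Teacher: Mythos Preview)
Your proposal is correct and follows exactly the approach indicated by the paper: the paper simply states ``Using the proof above, we can deduce the following result from Corollary \ref{cor: 1}'', and you have spelled out precisely this reduction --- recast \eqref{navier bc}\eqref{kinematic bc} as constant-coefficient diagonal oblique derivative conditions via the principal-direction frame (as in the proof of Theorem \ref{thm: main}), then invoke Corollary \ref{cor: 1} in place of Theorem \ref{thm: new}. Your extra remarks on the energy-estimate hypothesis and on umbilic points are sound bookkeeping that the paper leaves implicit.
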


It is an interesting problem to study the geometric regularity criteria for weak solutions to the Navier--Stokes equations in general regular domains in $\R^3$ under the Navier and kinematic conditions \eqref{navier bc}\eqref{kinematic bc}. In full generality, we may have difficulty finding the ``nice'' local frames in which Eqs.\,\eqref{navier bc}\eqref{kinematic bc} can be transformed to {\em constant-coefficient} diagonal homogeneous oblique derivative boundary conditions. Thus, to analyse the boundary conditions \eqref{navier bc}\eqref{kinematic bc} on general embedded surfaces in $\R^3$ calls for new ideas. We leave this question for future investigation.

\bigskip
\noindent
{\bf Declaration}. The author declares no conflict of interests.

\noindent
{\bf Acknowledgement}.
Siran Li would like to thank Dr.\,David Poyato for his helpful comments on potential estimates and suggestions for references. Moreover, the author is very grateful to Dr.\,Theodore Drivas for many insightful discussions, and for pointing out mistakes in an earlier version of the paper.


\begin{thebibliography}{99}

\bibitem{adn1}
S. Agmon, A. Douglis and L. Nirenberg, {Estimates near the boundary for solutions of elliptic partial differential equations satisfying general boundary conditions. I}. \textit{Comm. Pure  Appl. Math.}, \textbf{12} (1959): 623--727.

\bibitem{adn2}
S. Agmon, A. Douglis and L. Nirenberg, {Estimates near the boundary for solutions of elliptic partial differential equations satisfying general boundary conditions. II}. \textit{Comm. Pure  Appl. Math.}, \textbf{17} (1964): 35--92.


\bibitem{apv}
Y. Achdou, O. Pironneau, F. Valentin, {Effective boundary conditions for laminar flows over periodic rough boundaries}, \textit{J. Comput. Phys.}, \textbf{147} (1998), 187--218.


\bibitem{bansch}
E. B\"{a}nsch, {Finite element discretization of the Navier--Stokes equations with a free capillary surface}, \textit{Numer. Math.}, \textbf{88} (2001), 203--235.

\bibitem{bb1}
H. Beir\~{a}o da Veiga, L.~C. Berselli, On the regularizing effect of the vorticity direction in incompressible viscous flows, \textit{Differ. Integr. Eqn.}, \textbf{15} (2002), 345--356.

\bibitem{bb2}
H. Beir\~{a}o da Veiga, L.~C. Berselli, {Navier--Stokes equations: Green's matrices, vorticity direction, and regularity up to the boundary}, \textit{J. Diff. Eqns.}, \textbf{246} (2009), 597--628.


\bibitem{bb3}
H. Beir\~{a}o da Veiga, {Vorticity and regularity for flows under the Navier boundary condition}, \textit{Comm. Pure Appl. Anal.}, \textbf{5} (2006), 483--494.


\bibitem{bs}
L.~C. Berselli, S. Spirito, {On the vanishing viscosity limit of 3D Navier--Stokes equations under slip boundary conditions in general domains}, \textit{Commun. Math. Phys.}, \textbf{316} (2012), 171--198.

\bibitem{chae}
D. Chae, {On the regularity conditions for the Navier--Stokes equations and the related equations}, \textit{Rev. Math. Iberoam.}, \textbf{23} (2007), 371--384.

\bibitem{cq}
G.-Q. Chen, Z. Qian, {A study of the Navier--Stokes equations with the kinematic and Navier boundary conditions}, \textit{Indiana Univ. Math. J.}, \textbf{59} (2010), 721--760.


\bibitem{constantin}
P. Constantin, {Geometric statistics in turbulence},  \textit{SIAM Review}, \textbf{36} (1994), 73--98.

\bibitem{cf}
P. Constantin, C.~L. Fefferman, {Direction of vorticity and the problem of global regularity for the Navier--Stokes equations}, \textit{Indiana Univ. J. Math.}, \textbf{42} (1993), 775--789.


\bibitem{cfs}
P. Constantin, C. Foias, \textit{Navier--Stokes Equations},  University of Chicago Press, 1988.


\bibitem{epl}
D. Einzel, P. Panzer, M. Liu, {Boundary condition for fluid flow: curved or rough surfaces}, \textit{Phys. Rev. Lett.}, \textbf{64} (1990), 2269--2272.

\bibitem{evans}
L.~C. Evans, \textit{Partial Differential Equations}, Graduate Studies in Mathematics Vol. 19, American Mathematical
Society (2010).

\bibitem{f}
C.~L. Fefferman, Existence and smoothness of the Navier-Stokes equation. In: \textit{The Millennium Prize Problems}, CMI/AMS publication, p.67 (2006).


\bibitem{irs}
D. Iftimie, G. Raugel, and G.~R. Sell, {Navier--Stokes equations in thin 3D domains with Navier boundary conditions}, \textit{Indiana Univ. Math. J.}, \textbf{56} (2007), 1083--1156.


\bibitem{jm}
W. J\"{a}ger, A. Mikeli\'{c}, {On the roughness-induced effective boundary conditions for an incompressible viscous flow},  \textit{J. Differ. Eqns.}, \textbf{170} (2001), 96--122.

\bibitem{gm}
Y. Giga, H. Miura, {On vorticity directions near singularities for the Navier--Stokes flows with infinite energy}, \textit{Commun. Math. Phys.}, \textbf{303} (2011), 289--300.

\bibitem{gt}
D. Gilbarg, N.~S. Trudinger, \textit{Elliptic Partial Differential Equations of Second Order}. Berlin: Springer--Verlag, 1983.

\bibitem{grujic}
Z. Gruji\'{c}, {Localization and geometric depletion of vortex-stretching in the 3D NSE}, \textit{Commun. Math. Phys.}, \textbf{290} (2009), 861--870.

\bibitem{gr}
Z. Gruji\'{c}, A. Ruzmaikina, {Interpolation between algebraic and geometric conditions for smoothness of the vorticity in the 3D NSE}, \textit{Indiana Univ. Math. J.} \textbf{53} (2004), 1073--1080.

\bibitem{hopf}
E. Hopf, {\"{U}ber die Anfangswertaufgabe f\"{u}r die hydrodynamischen Grundgleichungen}, \textit{Math. Nachr.}, \textbf{4} (1951),
213--231.

\bibitem{lr}
P.~G. Lemari\'{e}-Rieusset, \textit{Recent Developments in the Navier--Stokes Problem}, Chapman
$\&$ Hall/CRC Research Notes in Mathematics, vol. 431. Chapman $\&$ Hall/CRC, Boca Raton, 2002.

\bibitem{leray}
J. Leray, {Sur le mouvement d'un liquide visqueux emplissant l'espace}, \textit{Acta Math.}, \textbf{63} (1934), 193--248.

\bibitem{l}
S. Li, {On vortex alignment and boundedness of $L^q$ norm of vorticity}, \textit{ArXiv preprint} (2017). ArXiv: 1712.00551.

\bibitem{ll}
E.~H. Lieb, M. Loss, \textit{Analysis}, Volume 14 of Graduate Studies in Mathematics. American Mathematical Society, Providence, TI, 4 (2001). 


\bibitem{mr}
N. Masmoudi, F. Rousset, {Uniform regularity for the Navier--Stokes equation with Navier boundary condition}, \textit{Arch. Ration. Mech. Anal.}, \textbf{203} (2012), 529--575.

\bibitem{mr2}
S. Montiel, A. Ros. \textit{Curves and surfaces}. Volume 69 in Graduate Texts of Mathematics. American Mathematical Soc., 2009.

\bibitem{maxwell}
J.~C. Maxwell, {On stresses in rarefied gasses arising from inequalities of temperature},
\textit{Phil. Thans. Roy. Soc. Lond.}, \textbf{170} (1879), 231--256.


\bibitem{navier}
C.~L.~M. Navier, M\'{e}moire sur les lois du mouvement des 
fluides, \textit{M\'{e}moires de
l'Acad\'{e}mie Royale des Sciences de l'Institut de France}, \textbf{1} (1816).

\bibitem{np}
J. Neustupa, P. Penel, {Local in time strong solvability of the non-steady Navier--Stokes equations with Navier's boundary condition and the question of the inviscid limit}, \textit{C. R. Acad. Sci. Paris, Ser. I}, \textbf{348} (2010), 1093--1097.

\bibitem{s} 
G. Seregin, \textit{Lecture Notes on Regularity Theory for the Navier--Stokes Equations}, World Scientific Publishing Co. Pte. Ltd., 2015.

\bibitem{s1}
V.~A. Solonnikov, {On Green's matrices for elliptic boundary problem, I}, \textit{Tr. Mat. Inst. Steklova}, \textbf{110} (1970), 123--170.

\bibitem{s2}
V.~A. Solonnikov, {On Green's matrices for elliptic boundary problem, II}, \textit{Tr. Mat. Inst. Steklova}, \textbf{116} (1971), 187--226.

\bibitem{ss}
V.~A. Solonnikov, C.~E. \u{S}\u{c}adilov, {On a boundary value problem for a stationary system of Navier--Stokes equations}, \textit{Proc. Steklov Inst. Math.}, \textbf{125} (1973), 186--199.


\bibitem{temam}
R. Temam, \textit{Navier--Stokes equations: theory and numerical analysis}, Vol. 343. American Mathematical Soc., 2001.

\bibitem{v}
A. Vasseur, {Regularity criterion for 3D Navier--Stokes equations in terms of the direction of the velocity}, \textit{Appl. Math.}, \textbf{54} (2009), 47--52.

\bibitem{xx}
Y. Xiao, Z. Xin, {On the vanishing viscosity limit for the 3D Navier--Stokes equations with a slip boundary condition}, \textit{Comm. Pure Appl. Math.}, \textbf{60} (2007), 1027--1055.

\end{thebibliography}
\end{document}